\newcommand{\cleqn}{\setcounter{equation}{0}}
\newcommand{\clth}{\setcounter{theorem}{0}}
\newcommand {\sectionnew}[1]{\section{#1}\cleqn\clth}
\theoremstyle{plain}
\newtheorem{theorem}{Theorem}[section]
\newtheorem{lemma}[theorem]{Lemma}
\newtheorem{proposition}[theorem]{Proposition}
\newtheorem{corollary}[theorem]{Corollary}
\theoremstyle{definition}
\newtheorem{example}[theorem]{Example}
\newtheorem{remark}[theorem]{Remark}
\newtheorem*{remark*}{Remark}
\newcommand \KK {{\mathbb K}}
\newcommand \KKbar {\overline{\KK}}
\newcommand \ZZ {{\mathbb Z}}
\newcommand \RR {{\mathbb R}}
\newcommand \calA {{\mathcal{A}}}
\newcommand \calB {{\mathcal{B}}}
\newcommand\HH{{\mathcal{H}}}
\newcommand \calL {{\mathcal{L}}}
\newcommand \calO {{\mathcal{O}}}
\newcommand \calR {{\mathcal{R}}}
\newcommand \calT {{\mathcal{T}}}
\newcommand \bfq {{\bf{q}}}
\newcommand \bfqhat {\widehat{\bfq}}
\newcommand \Jbar {\overline{J}}
\newcommand \psibar {\overline{\psi}}
\newcommand \thetabar {\overline{\theta}}
\newcommand \rhotil {\widetilde{\rho}}
\newcommand \qhat {\widehat{q}}
\DeclareMathOperator \id { {\mathrm{id}} }
\DeclareMathOperator \rank { {\mathrm{rank}} }
\DeclareMathOperator \Span { {\mathrm{Span}} }
\DeclareMathOperator \charr { {\mathrm{char}} }
\DeclareMathOperator \Hom { {\mathrm{Hom}} }
\DeclareMathOperator \gr  { {\mathrm{gr}} }
\DeclareMathOperator \Spec {Spec}
\DeclareMathOperator \Prim {Prim}
\DeclareMathOperator\Sat{Sat}
\DeclareMathOperator \ann { {\mathrm{ann}} }
\DeclareMathOperator \ass { {\mathrm{ass}} }
\DeclareMathOperator\hgt{ht}
\newcommand\kx{\KK^\times}
\newcommand \Znn {\ZZ_{\ge 0}}
\newcommand \Zpos {\ZZ_{> 0}}
\newcommand\Abq{\calA_\bfq}
\newcommand\Tbq{\calT_\bfq}
\newcommand\longmapsfrom{\longleftarrow\!\shortmid}
\newcommand\Zq{Z(\Tbq)}
\begin{document}

\title[Binomial ideals in quantum tori and quantum affine spaces]{Binomial ideals in quantum tori\\ and quantum affine spaces}

\author[K. R. Goodearl]{K. R. Goodearl}
\address{
Department of Mathematics \\
University of California\\
Santa Barbara, CA 93106 \\
U.S.A.
}
\email{goodearl@math.ucsb.edu}

\begin{abstract}
The article targets binomial ideals in quantum tori and quantum affine spaces.  First, noncommutative analogs of known results for commutative (Laurent) polynomial rings are obtained, including the following:  Under the assumption of an algebraically closed base field, it is proved that primitive ideals are binomial, as are radicals of binomial ideals and prime ideals minimal over binomial ideals.  In the case of a quantum torus $\Tbq$, the results are strongest: In this situation, the binomial ideals are parametrized by characters on sublattices of the free abelian group whose group algebra is the center of $\Tbq$; the sublattice-character pairs corresponding to primitive ideals as well as to radicals and minimal primes of binomial ideals are determined. As for occurrences of binomial ideals in quantum algebras: It is shown that cocycle-twisted group algebras of finitely generated abelian groups are quotients of quantum tori modulo binomial ideals.  Another appearance is as follows: Cocycle-twisted semigroup algebras of finitely generated commutative monoids, as well as quantum affine toric varieties, are quotients of quantum affine spaces modulo certain types of binomial ideals.
\end{abstract}

\subjclass[2020]{16D25, 16D70, 16D80, 16N40, 16P40, 16S35, 16T20, 20G42}

\keywords{quantum torus, quantum affine space, binomial ideal, radical, minimal prime, associated prime, twisted group algebra}

\thanks{This research was supported
by US National Science Foundation grant DMS-1601184.}

\maketitle

\sectionnew{Introduction}

We undertake an intensive study of binomial ideals in quantum tori and quantum affine spaces; here \emph{binomials} are nonzero linear combinations of at most two monomials in the canonical generators (i.e., noncommutative polynomials with at most two terms), and \emph{binomial ideals} are ideals generated by binomials.  Many of the results are parallel to those known for commutative (Laurent) polynomial rings, even when not obtained by analogous methods.  We start by reviewing the commutative setting.
\medskip

Eisenbud and Sturmfels began their classic paper \cite{EiSt} by saying ``It is notoriously difficult to deduce anything about the structure of an ideal or scheme by directly examining its defining polynomials. A notable exception is that of monomial ideals."  They went on to show that binomial ideals are equally exceptional.  For instance, if $T = \KK[x_1^{\pm1}, \dots, x_n^{\pm1}]$ is a Laurent polynomial ring over a field $\KK$, the proper binomial ideals of $T$ are parametrized by pairs $(L,\rho)$ called \emph{partial characters of $\ZZ^n$}, meaning that $L$ is a sublattice of $\ZZ^n$ and $\rho$ a group homomorphism from $L$ to $\kx$.  We denote the corresponding binomial ideal of $T$ by 
\begin{equation}  \label{Ilrho.comm}
I(L,\rho) := \langle x^\alpha - \rho(\alpha) \mid \alpha \in L \rangle,
\end{equation}
where $x^\alpha$ is the monomial with multi-exponent $\alpha$.  If $\KK$ is algebraically closed, the prime ideals minimal over $I(L,\rho)$ are all binomial, of the form $I(L',\rho'_i)$ where $L'/L$ is the torsion subgroup of $\ZZ^n/L$ and the $\rho'_i$ are the extensions of $\rho$ to $L'$.  These minimal primes all have height equal to the rank of $L$.  Moreover, the radical of $I(L,\rho)$ is binomial.  If $\charr \KK = 0$, then $I(L,\rho)$ is its own radical; when $\charr \KK = p > 0$, on the other hand, $\sqrt{I(L,\rho)} = I(L'',\rho'')$ where $L''/L$ is the $p$-torsion subgroup of $\ZZ^n/L$ and $\rho''$ is the unique extension of $\rho$ to $L''$.

For the polynomial ring $\KK[x_1, \dots, x_n]$, there is no corresponding parametrization of all the binomial ideals.  However, if $\KK$ is algebraically closed, it is still true that minimal primes and radicals of binomial ideals are binomial.
\medskip

In this article, we develop an analogous binomial ideal theory in quantum tori $\Tbq$ and quantum affine spaces $\Abq$ over $\KK$, leading to noncommutative versions of the Eisenbud-Sturmfels results above.  Some of our results are carried over easily from those of \cite{EiSt} or are based on noncommutative analogs of lines in \cite{EiSt}; others are obtained via quite different methods.  

The reason why many results concerning Laurent polynomial rings $T = \KK[x_1^{\pm1}, \dots, x_n^{\pm1}]$ readily carry over to $\Tbq$ is due to the following known facts: The ideals of $\Tbq$ correspond bijectively to those of its center via contraction and extension, and the center $Z(\Tbq)$ is a Laurent polynomial ring over $\KK$, i.e., the group algebra $\KK \Gamma_Z$ of a free abelian group $\Gamma_Z$ of finite rank.  As we show, this correspondence restricts to bijections between the binomial ideals of $\Tbq$ and the binomial ideals of $Z(\Tbq)$.  In particular, the restricted correspondence allows us to parametrize the binomial ideals of $\Tbq$ by pairs $(L,\rho)$ where $L$ is a sublattice of $\Gamma_Z$ and $\rho \in \Hom(L,\kx)$.  The definition of the corresponding binomial ideals $I(L,\rho)$ in $\Tbq$ involves additional scalar terms compared with \eqref{Ilrho.comm}, however, due to the ambient noncommutativity of $\Tbq$ and the choice of polynomial variables in $Z(\Tbq)$.

We prove that
\begin{itemize}
\item \emph{If $\KK$ is algebraically closed, all primitive ideals of quantum tori and quantum affine spaces over $\KK$ are binomial ideals.} [Theorems \ref{maxTq} and \ref{primJ.Aq}]
\item \emph{If $\KK$ is perfect, the radical of any binomial ideal in a quantum torus or quantum affine space over $\KK$ is in turn binomial.} [Theorems \ref{rad.binom} and \ref{ES3.1}]
\item \emph{If $\KK$ is algebraically closed, the prime ideals minimal over any binomial ideal in a quantum torus or quantum affine space over $\KK$ are themselves binomial.} [Theorems \ref{minprimes.binom} and \ref{min.over.binom}]
\end{itemize}
In the quantum torus cases of these results, we characterize the partial characters that parametrize the primitive ideals and those that tag the radicals and minimal primes of binomial ideals.

Finally, we advertize several situations in which binomial ideals appear:
\begin{itemize}
\item \emph{The cocycle-twisted group algebras over $\KK$ of finitely generated abelian groups are precisely the quotients of quantum tori modulo proper binomial ideals.} [Theorem \ref{KcG=Tq/J}]
\item \emph{The cocycle-twisted semigroup algebras over $\KK$ of finitely generated commutative monoids are precisely the quotients of quantum affine spaces modulo binomial ideals that contain no monomials.} [Corollary \ref{KcS=Aq/B}]
\item \emph{Every quantum affine toric variety over $\KK$ is a quotient of a quantum affine space modulo a completely prime binomial ideal.  The converse also holds if $\KK$ is algebraically closed.} [Theorem \ref{qafftoric.as.Aq/B}]
\end{itemize}

\subsection{Some notation and conventions}
Fix an arbitrary base field $\KK$.

For any ring $R$, we write $\Spec R$ and $\Prim R$ for the respective sets of prime and primitive ideals of $R$, equipped with Zariski topologies. We write $\langle X \rangle \vartriangleleft R$ to denote that $\langle X \rangle$ is the ideal of $R$ generated by a set $X$. An element $a \in R$ is said to be \emph{normal} if $aR = Ra$, and it is \emph{regular} if it is a non-zero-divisor.  We say that $a$ is \emph{regular} (or \emph{normal}) \emph{modulo $I$ in $R$}, for an ideal $I$ of $R$, if the coset $a+I$ is regular (or normal) in $R/I$.  A \emph{regular sequence} in $R$ is any sequence of elements $a_1,\dots,a_r \in R$ such that $a_1$ is regular and $a_{i+1}$ is regular modulo $\langle a_1, \dots, a_i \rangle$ for $i \in [1,r-1]$.

The groups and semigroups appearing in this paper are all commutative, and will be written additively.

\sectionnew{Quantum affine spaces and quantum tori}  \label{notat}
We fix notation for a quantum affine space over $\KK$ and the corresponding quantum torus, then recall and extend well-known relations between the ideals of a quantum torus and its center.

\subsection{}
Let $\bfq = (q_{ij})$ be a multiplicatively skew-symmetric matrix in $M_n(\kx)$, meaning that $q_{ii} = 1$ and $q_{ji} = q_{ij}^{-1}$ for all $i,j \in [1,n]$. The corresponding (\emph{multiparameter}) \emph{quantum affine space} and \emph{quantum torus} are the algebras
\begin{align*}
\Abq &= \calO_\bfq(\KK^n) := \KK \langle x_1,\dots,x_n \mid x_i x_j = q_{ij} x_j x_i \ \forall\; i,j \in [1,n] \rangle  \\
\Tbq &= \calO_\bfq((\kx)^n) := \KK \langle x_1^{\pm1},\dots,x_n^{\pm1} \mid x_i x_j = q_{ij} x_j x_i \ \forall\; i,j \in [1,n] \rangle.
\end{align*}
As is well known, $\Abq$ and $\Tbq$ are noetherian domains (e.g., \cite[Theorem I.2.7, Corollary I.2.8]{BG}).

\subsection{}
Set $\Gamma := \ZZ^n$ and $\Gamma^+ := \Znn^n$.  We adopt standard multi-exponent notation for monomials in $\Abq$ and $\Tbq$.  Namely, for any $\alpha = (\alpha_1,\dots,\alpha_n) \in \Gamma$, we set
$$
x^\alpha := x_1^{\alpha_1} x_2^{\alpha_2} \cdots x_n^{\alpha_n}.
$$
The families $(x^\alpha \mid \alpha \in \Gamma^+)$ and $(x^\alpha \mid \alpha \in \Gamma)$ are $\KK$-bases for $\Abq$ and $\Tbq$, respectively.
These algebras are also graded by $\Gamma$, with $(\Tbq)_{\alpha} = \KK x^{\alpha}$ for $\alpha \in \Gamma$ and $(\Abq)_{\alpha} = \KK x^{\alpha}$ for $\alpha \in \Gamma^+$, while $(\Abq)_{\alpha} = 0$ for $\alpha \in \Gamma \setminus \Gamma^+$.

\subsection{}
Define $d : \Gamma\times\Gamma \rightarrow \kx$ by
\begin{equation}  \label{def.d}
d(\alpha,\beta) := \prod_{i=1}^n \prod_{j=1}^i q_{ij}^{\alpha_i \beta_j} = \prod_{i=2}^n \prod_{j=1}^{i-1} q_{ij}^{\alpha_i \beta_j},
\end{equation}
and note that $d$ is a homomorphism in each variable. This function is chosen so that
\begin{equation}  \label{d.prod}
x^\alpha x^\beta = d(\alpha,\beta) x^{\alpha+\beta} \qquad \forall\; \alpha,\beta \in \Gamma.
\end{equation}
Moreover, $d$ is a $2$-cocycle on $\Gamma$, and $\Tbq$, written in terms of the basis $( x^\alpha \mid \alpha \in \Gamma )$, coincides with the cocycle-twisted group algebra $\KK^d\Gamma$.

It follows from \eqref{d.prod} that
\begin{equation}  \label{yalpha.inv}
\bigl( x^\alpha \bigr)^{-1} = d(\alpha,\alpha) x^{- \alpha} \qquad \forall\; \alpha \in \Gamma.
\end{equation}

\subsection{} \label{ZTq.items}
Set $\Gamma_Z := \{ \alpha \in \Gamma \mid x^{\alpha} \in \Zq\}$, which is a sublattice (= subgroup) of $\Gamma$. Moreover, $\Zq$ is a homogeneous subalgebra of $\Tbq$ (with respect to the $\Gamma$-grading), and so
\begin{equation}  \label{ZTq}
\Zq = \bigoplus_{\alpha \in \Gamma_Z} \KK x^\alpha.
\end{equation}
Recall that a \emph{transversal} for a subgroup $H$ in an abelian group $G$ is a complete, irredundant set of representatives for the cosets of $H$ in $G$.
A basic observation is the following: If $S$ is a transversal for $\Gamma_Z$ in $\Gamma$, then $\Tbq$ is a free right or left $\Zq$-module with basis $(x^\sigma \mid \sigma \in S)$ (e.g., \cite[Proposition 2.3.3]{Kar}).

Clearly $\Tbq$ is graded-simple, and so its center is a Laurent polynomial ring of the form
$$\Zq = \KK [ (x^{\beta_1})^{\pm1}, \dots, (x^{\beta_r})^{\pm1} ],$$
where $(\beta_1,\dots,\beta_r)$ is a basis for $\Gamma_Z$ (e.g., \cite[Lemma II.3.7]{BG}), allowing $r=0$ in case $\Gamma_Z = 0$. Obviously $\Zq$ is isomorphic to the group algebra of $\Gamma_Z$, but a particular isomorphism will be needed. Fix the basis $(\beta_1,\dots,\beta_r)$ for $\Gamma_Z$, write $\KK\Gamma_Z$ as a Laurent polynomial ring of the form
$$\KK\Gamma_Z = \KK[ z_1^{\pm1}, \dots, z_r^{\pm1} ],$$
and then set
$$z_\gamma := z_1^{m_1} \cdots z_r^{m_r} \qquad \forall\; \gamma = m_1\beta_1 + \cdots + m_r\beta_r \in \Gamma_Z \,,$$
so that $(z_\gamma \mid \gamma \in \Gamma_Z)$ is a $\KK$-basis for $\KK\Gamma_Z$ and $z_\gamma z_\delta = z_{\gamma+\delta}$ for $\gamma,\delta \in \Gamma_Z$.

There is a $\KK$-algebra isomorphism 
\begin{equation}  \label{def.phi}
\phi : \KK\Gamma_Z \overset{\cong}{\longrightarrow} \Zq \quad\text{such that}\quad \phi(z_i) = x^{\beta_i}\ \ \forall\, i \in [ 1,r ].
\end{equation}
Observe that there are scalars $c(\gamma) \in \kx$ such that
\begin{equation}  \label{def.c}
\phi(z_\gamma) = c(\gamma) x^\gamma \qquad \forall\; \gamma \in \Gamma_Z \,.
\end{equation}
It follows from \eqref{d.prod} that
\begin{equation}  \label{d.ab}
d(\alpha,\beta) = c(\alpha+\beta) c(\alpha)^{-1} c(\beta)^{-1} \qquad \forall\, \alpha, \beta \in \Gamma_Z \,.
\end{equation}

\subsection{} \label{ideals.of.Tq}
The ideals of $\Zq$ and $\KK\Gamma_Z$ match up with those of $\Tbq$ as follows. Note that if we also use the name $\phi$ for the map with codomain enlarged to $\Tbq$, then the set map $J \mapsto \phi^{-1}(J)$ provides a bijection from the set of ideals of $\Tbq$ onto the set of ideals of $\KK\Gamma_Z$ and restricts to a bijection of $\Spec \Tbq$ onto $\Spec \KK\Gamma_Z$.

\begin{proposition}  \label{idealsTq}
There are inverse inclusion-preserving bijections
\begin{equation}   \label{ideals.biject}
\begin{array}{r c l r c l}
\{ \,\text{ideals of}\ \KK\Gamma_Z \,\} &\longleftrightarrow &\multicolumn{2}{c}{\{ \,\text{ideals of}\ \Zq \,\}} &\longleftrightarrow &\{ \,\text{ideals of}\ \Tbq \, \}  \\
I_0 &\longmapsto &\phi(I_0) &I &\longmapsto &I\cdot \Tbq  \\
\phi^{-1}(J_0) &\longmapsfrom &J_0 &J \cap \Zq &\longmapsfrom &J.
\end{array}
\end{equation}
These bijections restrict to bijections
\begin{equation}  \label{primes.biject}
\Spec \KK\Gamma_Z \longleftrightarrow \Spec \Zq \longleftrightarrow \Spec \Tbq \,.
\end{equation}
\end{proposition}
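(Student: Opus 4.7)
The first bijection in \eqref{ideals.biject}, between ideals of $\KK\Gamma_Z$ and ideals of $\Zq$, is immediate from $\phi$ being an algebra isomorphism, and it obviously restricts to spectra. All of the content therefore concerns the second bijection, between ideals of $\Zq$ and ideals of $\Tbq$.

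The main tool is the fact (recalled in \S\ref{ZTq.items}) that for any transversal $S$ for $\Gamma_Z$ in $\Gamma$ the algebra $\Tbq$ is a free right and free left $\Zq$-module with basis $(x^\sigma \mid \sigma \in S)$. Choosing $S$ to contain $0$, this endows $\Tbq$ with a $\Gamma/\Gamma_Z$-grading whose homogeneous component at a coset $\bar\sigma$ is $(\Tbq)_{\bar\sigma} = \Zq x^\sigma = x^\sigma \Zq$ (the two descriptions agreeing because $\Zq$ is central and each $x^\sigma$ is a unit of $\Tbq$). For any ideal $I$ of $\Zq$ this grading gives $I\Tbq = \bigoplus_{\sigma \in S} Ix^\sigma$, and intersecting with the identity component $\Zq = (\Tbq)_{\bar 0}$ returns $I$. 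Thus $J \mapsto J \cap \Zq$ is at least a left inverse to $I \mapsto I\Tbq$.

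The crux is the reverse equality $(J \cap \Zq)\Tbq = J$ for every ideal $J$ of $\Tbq$; granting it, the two maps are inverse bijections. The inclusion $\supseteq$ is clear, so the plan is to take an arbitrary $y \in J$, decompose it via the $\Gamma/\Gamma_Z$-grading as $y = \sum_{\bar\tau} y_{\bar\tau}$ (a finite sum), and prove that each component $y_{\bar\tau}$ already lies in $J$. Once this is done, writing $y_{\bar\tau} = z x^\sigma$ with $\sigma \in S$ representing $\bar\tau$ and $z \in \Zq$, one recovers $z = y_{\bar\tau} x^{-\sigma} \in J \cap \Zq$ and hence $y_{\bar\tau} \in (J \cap \Zq)\Tbq$. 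The main obstacle is therefore to show that each homogeneous component stays in $J$. For this I plan a conjugation/character argument: setting $\chi(\alpha,\beta) := d(\alpha,\beta)\, d(\beta,\alpha)^{-1}$, the relation $x^\alpha x^\beta = \chi(\alpha,\beta)\, x^\beta x^\alpha$ yields
$$
x^\alpha y x^{-\alpha} \;=\; \sum_{\bar\tau} \chi(\alpha,\tau)\, y_{\bar\tau} \;\in\; J,
$$
where $\chi(\alpha,\tau)$ is well defined on the coset $\bar\tau$ because $\chi(\alpha,-)$ is trivial on $\Gamma_Z$. Let $\bar\tau_1,\dots,\bar\tau_k$ be the finitely many cosets actually appearing in $y$; the characters $\chi(-,\tau_j): \Gamma \to \kx$ are pairwise distinct, since an equality would force $\tau_i - \tau_j$ to commute with every $x^\alpha$, placing it in $\Gamma_Z$. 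By Artin's linear independence of characters I can then choose $\alpha_1,\dots,\alpha_k \in \Gamma$ making the $k \times k$ matrix $[\chi(\alpha_l,\tau_j)]$ invertible; inverting it produces $\KK$-linear combinations of the conjugates $x^{\alpha_l} y x^{-\alpha_l} \in J$ that isolate each $y_{\bar\tau_i}$ in $J$. This Artin step is the crux.

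The bijections plainly preserve inclusions. For the prime statement \eqref{primes.biject}, the $\KK\Gamma_Z \leftrightarrow \Zq$ part is trivial, and for $\Zq \leftrightarrow \Tbq$ I would argue multiplicatively: every ideal of $\Tbq$ has the form $I_A \Tbq$ with $I_A$ central, so $(I_A \Tbq)(I_B \Tbq) = I_A I_B \Tbq$, and contracting gives $(I_A\Tbq)(I_B\Tbq) \subseteq I\Tbq$ iff $I_A I_B \subseteq I$, while $I_A \Tbq \subseteq I \Tbq$ iff $I_A \subseteq I$. Hence $I\Tbq$ is prime precisely when $I$ is, completing the proof.
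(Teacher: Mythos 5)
Your argument is correct, but it proceeds quite differently from the paper, which disposes of the right-hand bijections in \eqref{ideals.biject} and of \eqref{primes.biject} simply by citing \cite[Proposition II.3.8, Exercise II.3.C]{BG}; in effect you have supplied a self-contained proof of the cited result. Your route is sound at every step: freeness of $\Tbq$ over $\Zq$ on a transversal basis (as in \S\ref{ZTq.items}) gives the $\Gamma/\Gamma_Z$-decomposition and hence $I\Tbq \cap \Zq = I$; the crux, that every ideal $J$ of $\Tbq$ satisfies $(J\cap\Zq)\Tbq = J$, is exactly the statement that $J$ is $\Gamma/\Gamma_Z$-homogeneous, and your conjugation argument works because $\chi(\alpha,-) = d(\alpha,-)d(-,\alpha)^{-1}$ is trivial precisely on $\Gamma_Z$, the resulting characters $\chi(-,\tau_j)$ on distinct cosets are distinct, and Dedekind--Artin independence lets you invert the matrix $[\chi(\alpha_l,\tau_j)]$ to isolate each homogeneous component inside $J$ (a ``minimal support'' induction is the other standard way to do this step, and is closer to what the cited source does). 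The prime correspondence via $(I_A\Tbq)(I_B\Tbq) = I_AI_B\Tbq$ together with the already-established contraction identity is also fine, since primeness can be tested on two-sided ideals; the only point you leave tacit is the (trivial) matching of properness, which again follows from $I\Tbq\cap\Zq = I$. What the two approaches buy: the paper's citation is shorter and leaves the crossed-product machinery to the references, while your proof is self-contained and makes explicit the useful fact that every ideal of $\Tbq$ is $\Gamma/\Gamma_Z$-graded, i.e.\ extended from the center.
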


\begin{proof} The left hand bijections in \eqref{ideals.biject} are immediate from \eqref{def.phi}, while the right hand ones are given by \cite[Proposition II.3.8]{BG}, for instance. The bijections \eqref{primes.biject} follow easily \cite[Exercise II.3.C]{BG}.
\end{proof}

The \emph{height} of a prime ideal $P$ in a ring $R$, denoted $\hgt P$, is the supremum of the lengths $r$ of chains $P_0 = P \supsetneqq P_1 \supsetneqq \cdots \supsetneqq P_r$ of prime ideals in $R$. Following commutative usage,  the \emph{height} of an arbitrary ideal $I$ of $R$ is the infimum of the heights of the prime ideals containing $I$. 

\begin{corollary}  \label{hts.Tq}
Let $I$ be an ideal of $\Tbq$.

{\rm(a)} The bijections \eqref{ideals.biject} restrict to bijections between the sets
\begin{itemize}
\item $\{\, \text{prime ideals of}\; \KK\Gamma_Z\; \text{minimal over}\; \phi^{-1}(I)\, \}$,
\item $\{\, \text{prime ideals of}\; Z(\Tbq)\; \text{minimal over}\; I \cap Z(\Tbq)\, \}$,
\item $\{\, \text{prime ideals of}\; \Tbq\; \text{minimal over}\; I\, \}$.
\end{itemize}

{\rm(b)} $\hgt I = \hgt (I\cap Z(\Tbq)) = \hgt \phi^{-1}(I)$.

{\rm(c)} If $z \in Z(\Tbq)$, then $z$ is regular modulo $I$ in $\Tbq$ if and only if $z$ is regular modulo $I \cap Z(\Tbq)$ in $Z(\Tbq)$.
\end{corollary}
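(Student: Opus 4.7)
The plan is to derive all three parts directly from Proposition \ref{idealsTq}, using only the fact that its bijections are inclusion-preserving together with the freeness of $\Tbq$ over its center recorded in \S\ref{ZTq.items}. The left bijection in \eqref{ideals.biject} is induced by an algebra isomorphism and trivially preserves every ideal-theoretic invariant in sight, so the real work is with the right bijection $I \leftrightarrow I \cap \Zq$.

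For part (a), my approach is simply to observe that since the bijection preserves inclusions (and, by inverting, strict inclusions), it restricts to an inclusion-preserving bijection between the primes containing $I$ and the primes containing $I \cap \Zq$; minimality, being a purely order-theoretic property, is then automatically matched on both sides. The analogous statement for $\KK\Gamma_Z$ follows likewise.

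For part (b), I would note that the bijection of \eqref{primes.biject} carries chains of primes to chains of primes of the same length, so $\hgt P = \hgt(P \cap \Zq) = \hgt \phi^{-1}(P)$ for every $P \in \Spec \Tbq$. The height of an ideal being an infimum over primes containing it, part (a) then yields the three-way equality $\hgt I = \hgt(I \cap \Zq) = \hgt \phi^{-1}(I)$.

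Part (c) is the only place where freeness is needed, and it is the main obstacle. One direction is immediate: if $z \in \Zq$ is regular modulo $I$ in $\Tbq$ and $w \in \Zq$ satisfies $zw \in I \cap \Zq$, then $w \in I$, hence $w \in I \cap \Zq$. For the reverse direction, I plan to fix a transversal $S$ for $\Gamma_Z$ in $\Gamma$ so that $(x^\sigma \mid \sigma \in S)$ is a basis for $\Tbq$ as a right $\Zq$-module. Proposition \ref{idealsTq} gives $I = (I \cap \Zq) \cdot \Tbq = \bigoplus_{\sigma \in S} x^\sigma (I \cap \Zq)$. Writing a general $a \in \Tbq$ in this basis as $a = \sum_\sigma x^\sigma y_\sigma$ with $y_\sigma \in \Zq$, centrality of $z$ gives $za = \sum_\sigma x^\sigma (z y_\sigma)$; the condition $za \in I$ then forces $zy_\sigma \in I \cap \Zq$ for every $\sigma$, and the hypothesis on $z$ then forces $y_\sigma \in I \cap \Zq$, so that $a \in I$. (The same argument run on the left shows that $az \in I$ implies $a \in I$, completing the regularity check.)
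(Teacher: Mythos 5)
Your argument is correct. Parts (a) and (b) are handled exactly as in the paper, which simply notes they follow from the inclusion-preserving bijections of Proposition \ref{idealsTq}; your order-theoretic reading (minimality and chain lengths are preserved, heights of ideals are infima over primes containing them) is just the spelled-out version of that. For part (c), however, you take a genuinely different route. The paper avoids the free-module decomposition entirely: it sets $J := \{ t \in \Tbq \mid zt \in I \}$ and $J_Z := \{ y \in \Zq \mid zy \in I_Z \}$ with $I_Z := I \cap \Zq$, observes that these are ideals (centrality of $z$) with $J_Z = J \cap \Zq$, and then concludes from the ideal bijection that $J = I$ if and only if $J_Z = I_Z$, which is exactly the two regularity statements. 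Your proof instead uses Proposition \ref{idealsTq} only to write $I = (I \cap \Zq)\cdot\Tbq$ and then exploits the freeness of $\Tbq$ over $\Zq$ recorded in \S\ref{ZTq.items} to compare coordinates $z y_\sigma$ in the decomposition $I = \bigoplus_{\sigma} x^\sigma (I\cap\Zq)$; note the centrality of $z$ is what lets you push $z$ past $x^\sigma$, and the directness of the sum inside $\bigoplus_\sigma x^\sigma\Zq$ is what forces each coordinate into $I\cap\Zq$ --- both points you use correctly. What each approach buys: the paper's is shorter and purely lattice-theoretic, applying the contraction--extension bijection a second time to the colon-type ideal $J$; yours is more concrete, makes the structure of $I$ as $\bigoplus_\sigma x^\sigma(I\cap\Zq)$ explicit (a fact reused in spirit elsewhere, e.g.\ in Lemma \ref{ass.M.M0}), and would still work in any situation where one has the free basis but wants to avoid quoting the full ideal bijection for the auxiliary ideal. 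Both are complete proofs.
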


\begin{proof}
(a) and (b) follow directly from Proposition \ref{idealsTq}.

(c) Set $J := \{ t \in \Tbq \mid zt \in I \}$ and $J_Z := \{ y \in Z(\Tbq) \mid zy \in I_Z \}$, where $I_Z := I \cap Z(\Tbq)$.  Then $J$ and $J_Z$ are ideals of $\Tbq$ and $Z(\Tbq)$, resp., and $J_Z = J \cap Z(\Tbq)$.  Now $z$ is regular modulo $I$ in $\Tbq$ if and only if $J = I$, and $z$ is regular modulo $I_Z$ in $Z(\Tbq)$ if and only if $J_Z = I_Z$.  Since $J = I$ if and only if $J_Z = I_Z$, part (c) follows.
\end{proof}

We recall the concept of associated primes for modules $M$ over a noncommutative noetherian ring $R$ (e.g., \cite[p.56]{GW}, \cite[\S\S4.3.4, 4.4.4, Lemma 4.4.5]{MR}): a prime ideal $P$ of $R$ is an \emph{associated prime of $M$} provided $M$ has a submodule $M'$ which is a \emph{fully faithful $(R/P)$-module}, meaning that $P = \ann(M'')$ for all nonzero submodules $M'' \le M'$. The set of associated primes of $M$ is denoted $\ass(M)$.

\begin{lemma}  \label{ass.M.M0}
Let $M_0$ be a left $\Zq$-module and $M$ the left $\Tbq$-module $\Tbq \otimes_{\Zq} M_0$. Let $P \in \Spec \Tbq$ and $P_0 := P \cap \Zq \in \Spec \Zq$. The following are equivalent:

{\rm(a)} $P \in \ass(M)$.

{\rm(b)} $P_0 \in \ass(M_0)$.

{\rm(c)} $P = \ann_{\Tbq}(\Tbq a)$ for some $a\in M$.
\end{lemma}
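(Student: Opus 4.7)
The plan is to prove the cycle (a) $\Rightarrow$ (c) $\Rightarrow$ (b) $\Rightarrow$ (a), leaning on two persistent tools: the freeness of $\Tbq$ as a $\Zq$-module and the ideal bijection of Proposition \ref{idealsTq}. I will fix a transversal $S$ for $\Gamma_Z$ in $\Gamma$ with $0 \in S$, giving a $\Zq$-basis $(x^\sigma \mid \sigma \in S)$ of $\Tbq$, and correspondingly a free decomposition $M = \bigoplus_{\sigma \in S} x^\sigma \otimes M_0$ as a $\Zq$-module. Two observations will be used repeatedly: $(\star)$ for any $a \in M$ and $z \in \Zq$, centrality gives $z\Tbq a = \Tbq z a$, hence $\ann_{\Zq}(\Tbq a) = \ann_{\Zq}(a) = \ann_{\Tbq}(\Tbq a) \cap \Zq$; and $(\star\star)$ by the ideal bijection, any ideal $I \vartriangleleft \Tbq$ with $I \cap \Zq = P_0$ is forced to equal $P_0 \Tbq = P$.

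The implication (a) $\Rightarrow$ (c) is immediate from the definition of associated prime: a fully faithful $(\Tbq/P)$-submodule $M' \subseteq M$ supplies, for any $0 \neq a \in M'$, a nonzero submodule $\Tbq a \subseteq M'$ with $\ann_{\Tbq}(\Tbq a) = P$.

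For (c) $\Rightarrow$ (b), observation $(\star)$ gives $\ann_{\Zq}(a) = P \cap \Zq = P_0$. Expanding $a = \sum_{\sigma \in F} x^\sigma \otimes m_\sigma$ for some finite $F \subseteq S$, the free decomposition of $M$ yields $\ann_{\Zq}(a) = \bigcap_{\sigma \in F} \ann_{\Zq}(m_\sigma)$; each $\ann_{\Zq}(m_\sigma)$ contains $P_0$ (since $P_0 a = 0$ implies $P_0 m_\sigma = 0$), so this finite intersection of ideals containing $P_0$ equals the prime $P_0$, forcing some $\ann_{\Zq}(m_\sigma) = P_0$. This $m_\sigma \in M_0$ witnesses $P_0 \in \ass(M_0)$.

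For (b) $\Rightarrow$ (a), start with a fully faithful $(\Zq/P_0)$-submodule $N_0 \subseteq M_0$ and set $N := \Tbq \otimes_{\Zq} N_0 \subseteq M$, using flatness of $\Tbq$ over $\Zq$. Since $P = P_0 \Tbq$, we have $PN = 0$. Given any nonzero $\Tbq$-submodule $N' \subseteq N$, pick $0 \neq n \in N'$ and expand $n = \sum_\sigma x^\sigma \otimes n_\sigma$ with $n_\sigma \in N_0$; the free decomposition gives $\ann_{\Zq}(n) = \bigcap_{n_\sigma \neq 0} \ann_{\Zq}(n_\sigma) = P_0$, each $\ann_{\Zq}(n_\sigma)$ being $P_0$ by the assumed full faithfulness of $N_0$. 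Applying $(\star)$ upgrades this to $\ann_{\Zq}(\Tbq n) = P_0$, and $(\star\star)$ then forces $\ann_{\Tbq}(\Tbq n) = P$. Sandwiching $P \subseteq \ann_{\Tbq}(N') \subseteq \ann_{\Tbq}(\Tbq n) = P$ yields $\ann_{\Tbq}(N') = P$, so $N$ is fully faithful over $\Tbq/P$. The main obstacle throughout is the bookkeeping needed to pass between $\Tbq$- and $\Zq$-annihilators of elements of $M$ versus of the cyclic submodules they generate; observations $(\star)$ and $(\star\star)$ together with the prime-intersection trick carry this weight.
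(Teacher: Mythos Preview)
Your proof is correct and follows essentially the same cycle and toolkit as the paper: freeness of $\Tbq$ over $\Zq$, the ideal bijection of Proposition~\ref{idealsTq}, and the prime-intersection argument for (c)$\Rightarrow$(b). The one noteworthy difference is in (b)$\Rightarrow$(a): the paper starts from a single element $b \in M_0$ with $\ann_{\Zq}(b) = P_0$, sets $b' = 1\otimes b$, and then---to show $\Tbq b'$ is fully faithful over $\Tbq/P$---invokes the noetherian hypothesis to choose an associated prime $Q$ of an arbitrary nonzero submodule and re-runs the (c)$\Rightarrow$(b) argument to force $Q = P$. Your route is more direct: by taking a fully faithful $(\Zq/P_0)$-submodule $N_0$ from the start, every nonzero $n_\sigma \in N_0$ already has $\ann_{\Zq}(n_\sigma) = P_0$, so the annihilator computation for any $n \in N$ is immediate and $(\star\star)$ finishes without appealing to existence of associated primes. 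This buys a small simplification and avoids the noetherian hypothesis at that step.
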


\begin{proof}
Let $T$ be a transversal for $\Gamma_Z$ in $\Gamma$. As noted in \S\ref{ZTq.items}, $\Tbq$ is a free right $\Zq$-module with basis $(x^\tau \mid \tau \in T)$. Consequently, $M = \bigoplus_{\tau\in T} x^\tau \otimes M_0$ as abelian groups.

(a)$\Longrightarrow$(c): There is a submodule $M' \le M$ which is a fully faithful $(\Tbq/P)$-module. Choose $0 \ne a \in M'$; then $P = \ann_{\Tbq}(\Tbq a)$.

(c)$\Longrightarrow$(b): Write $a = \sum_{\tau\in T'} x^\tau \otimes a_\tau$ for some finite subset $T' \subseteq T$ and some $a_\tau \in M_0$. For any $z \in \Zq$, we have $z \in P$ iff $za = 0$ iff $\sum_{\tau\in T'} x^\tau \otimes za_\tau = 0$ iff $za_\tau = 0$ for all $\tau \in T'$, due to the centrality of $z$.  Thus,
$$P_0 = \bigcap_{\tau \in T'} \ann_{\Zq}(a_\tau).$$
Since $P_0$ is prime, $P_0 = \ann_{\Zq}(a_\sigma)$ for some $\sigma \in T'$, and (b) follows.

(b)$\Longrightarrow$(a): There exists $b \in M_0$ such that $P_0 = \ann_{\Zq}(b)$. Let $b' := 1\otimes b \in M$ and $P' := \ann_{\Tbq}(\Tbq b')$. As above, $P' \cap \Zq = P_0$, and consequently $P' = P$.

We will be done if $\Tbq b'$ is a fully faithful $(\Tbq/P)$-module, i.e., if $P = \ann_{\Tbq}(M')$ for any nonzero submodule $M' \le \Tbq b'$. Choose $Q \in \ass(M')$ (which exists because $\Tbq$ is noetherian), and set $Q_0 := Q \cap \Zq$. There is a nonzero submodule $M'' \le M'$ which is a fully faithful $(\Tbq/Q)$-module. Choose $c \in \Tbq$ such that $0 \ne cb' \in M''$, so that $\ann_{\Tbq}(\Tbq cb') = Q$. The argument of (c)$\Longrightarrow$(b) shows that $Q_0 = \ann_{\Zq}(c_\sigma b)$ for some $c_\sigma \in Z(\Tbq)$.
Now $c_\sigma b \ne 0$ and $Q_0 = P_0$ since $\Zq b \cong \Zq/P_0$. Consequently,
$$P = Q = \ann_{\Tbq}(M'') \supseteq \ann_{\Tbq}(M') \supseteq \ann_{\Tbq}(\Tbq b') = P' = P,$$
and therefore $P = \ann_{\Tbq}(M')$ as desired.
\end{proof}

\begin{corollary}  \label{ass.Tq}
Let $I$ be an ideal of $\Tbq$. The bijections \eqref{ideals.biject} restrict to bijections between the sets $\ass({}_{\KK\Gamma_Z}(\KK\Gamma_Z/\phi^{-1}(I)))$, $\ass({}_{Z(\Tbq)}(Z(\Tbq)/(I \cap Z(\Tbq))))$, and $\ass({}_{\Tbq}(\Tbq/I))$.
\end{corollary}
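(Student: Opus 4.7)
The plan is to apply Lemma \ref{ass.M.M0} after identifying $\Tbq/I$ as a tensor product over $\Zq$. With $M_0 := \Zq/(I \cap \Zq)$, I would first verify that $\Tbq \otimes_{\Zq} M_0 \cong \Tbq/I$ as left $\Tbq$-modules. Since $\Tbq$ is a free right $\Zq$-module by the transversal observation recalled in \S\ref{ZTq.items}, it is \emph{flat}, and tensoring the exact sequence $0 \to I \cap \Zq \to \Zq \to M_0 \to 0$ on the left with $\Tbq$ yields $\Tbq \otimes_{\Zq} M_0 \cong \Tbq/(I \cap \Zq)\Tbq$ after identifying $\Tbq \otimes_{\Zq} \Zq$ with $\Tbq$. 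By Proposition \ref{idealsTq}, the inverse of the map $J \mapsto J \cap \Zq$ is $J_0 \mapsto J_0 \cdot \Tbq$, so $(I \cap \Zq)\Tbq = I$, and this quotient is exactly $\Tbq/I$.

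With this identification, Lemma \ref{ass.M.M0} applied to $M_0$ and $M := \Tbq \otimes_{\Zq} M_0$ says that a prime $P \in \Spec \Tbq$ lies in $\ass({}_{\Tbq}(\Tbq/I))$ if and only if $P \cap \Zq \in \ass({}_{\Zq}(\Zq/(I \cap \Zq)))$. Combined with the bijection $P \leftrightarrow P \cap \Zq$ from \eqref{primes.biject}, this establishes the right-hand restricted bijection in the statement of the corollary.

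For the left-hand bijection, I would use that $\phi : \KK\Gamma_Z \xrightarrow{\cong} \Zq$ is a $\KK$-algebra isomorphism. Interpreting $\phi^{-1}$ on ideals of $\Tbq$ as in \S\ref{ideals.of.Tq}, one has $\phi^{-1}(I) = \phi^{-1}(I \cap \Zq)$, and so $\phi$ induces a ring isomorphism $\KK\Gamma_Z/\phi^{-1}(I) \xrightarrow{\cong} \Zq/(I \cap \Zq)$. Any ring isomorphism carries associated primes of one side bijectively to associated primes of the other, and the resulting correspondence is precisely the left-hand bijection of \eqref{ideals.biject}. Concatenating the two bijections gives the corollary. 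There is no real obstacle: Proposition \ref{idealsTq} and Lemma \ref{ass.M.M0} have already done the substantive work, and what remains is only the tensor product identification $\Tbq/I \cong \Tbq \otimes_{\Zq} M_0$ and a straightforward transport across $\phi$.
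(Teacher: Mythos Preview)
Your proof is correct and follows essentially the same approach as the paper: both invoke Lemma~\ref{ass.M.M0} with $M_0 = \Zq/(I\cap\Zq)$ for the right-hand bijection and transport across the isomorphism $\phi$ for the left-hand one. The paper leaves the identification $\Tbq \otimes_{\Zq} M_0 \cong \Tbq/I$ implicit, whereas you spell it out via flatness and Proposition~\ref{idealsTq}; this extra detail is welcome but does not change the strategy.
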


\begin{proof}
The bijection between the first and second sets occurs because $\phi$ is an isomorphism of $\KK\Gamma_Z$ onto $Z(\Tbq)$ that carries $\phi^{-1}(I)$ onto $I \cap Z(\Tbq)$. The bijection between the second and third sets follows from Proposition \ref{idealsTq} and the case $M_0 = Z(\Tbq)/(I \cap Z(\Tbq))$ of Lemma \ref{ass.M.M0}.
\end{proof}

\sectionnew{Binomial ideals in $\Tbq$}  \label{binomTq}
We first show that binomial ideals of the quantum torus $\Tbq$ correspond to binomial ideals of its center, and that quotients modulo these binomial ideals are twisted group algebras. The connection with binomial ideals of the center allows the classical results of Eisenbud and Sturmfels \cite{EiSt} on radicals, minimal and associated primes of binomial ideals in Laurent polynomial rings to be carried over to $\Tbq$. Some of these results also follow from twisted group algebra structures on quotient algebras.

\subsection{}
Following \cite[\S2]{EiSt}, we define a (\emph{Laurent}) \emph{binomial} in $\KK\Gamma_Z$ to be any nonzero element of the form $\lambda z_\alpha + \mu z_\beta$ where $\lambda, \mu \in \KK$ and $\alpha, \beta \in \Gamma$, and we define a (\emph{Laurent}) \emph{binomial ideal} in $\KK\Gamma_Z$ to be any ideal generated by binomials. By \cite[Theorem 2.1(a)]{EiSt}, the binomial ideals of $\KK\Gamma_Z$ can be described in terms of \emph{partial characters} of $\Gamma_Z$, meaning homomorphisms from sublattices of $\Gamma_Z$ to $\kx$. Allowing empty sets of generators, the zero ideal is a binomial ideal of $\KK\Gamma_Z$. Moreover, $\KK\Gamma_Z = \langle z_0 \rangle$ is a binomial ideal.

Carrying this terminology over to quantum tori, let us say that a (\emph{Laurent}) \emph{binomial} in $\Tbq$ is any nonzero element of the form $\lambda x^\alpha + \mu x^\beta$ where $\lambda, \mu \in \KK$ and $\alpha, \beta \in \Gamma$, and that a (\emph{Laurent}) \emph{binomial ideal} in $\Tbq$ is any ideal generated by binomials.  Note that $0$ and $\Tbq$ are binomial ideals of $\Tbq$.

\begin{lemma}  \label{binom.biject}
The bijections \eqref{ideals.biject} restrict to bijections
$$\{ \,\text{binomial ideals of}\ \KK\Gamma_Z \,\} \longleftrightarrow \{ \,\text{binomial ideals of}\ \Tbq \, \}.$$
\end{lemma}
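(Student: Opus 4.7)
The forward direction is immediate: since the isomorphism $\phi$ sends each $z_\gamma$ to $c(\gamma)x^\gamma$, it carries binomials of $\KK\Gamma_Z$ to binomials of $Z(\Tbq)\subseteq \Tbq$. Hence if $I_0 = \langle g_i \rangle$ is a binomial ideal of $\KK\Gamma_Z$, then $\phi(I_0)\cdot \Tbq$ is generated in $\Tbq$ by the binomials $\phi(g_i)$, so it is a binomial ideal of $\Tbq$.

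For the reverse direction, my plan is to normalize binomial generators of an ideal $J$ of $\Tbq$, exploiting the fact that every monomial $x^\alpha$ is a unit. Given a binomial $f = \lambda x^\alpha + \mu x^\beta$ with $\lambda,\mu$ both nonzero, a direct application of \eqref{d.prod} yields a factorization
\[ f \;=\; \lambda\, x^\alpha\bigl(1 - \nu x^\gamma\bigr), \qquad \gamma := \beta-\alpha, \]
for a suitable $\nu\in\kx$. Since $\lambda x^\alpha$ is a unit of $\Tbq$, it follows that $\langle f\rangle = \langle 1 - \nu x^\gamma\rangle$. If instead one of $\lambda,\mu$ vanishes, then $f$ is a nonzero scalar multiple of a monomial, hence a unit, and $\langle f\rangle = \Tbq$.

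The key technical step is to show that $\langle 1 - \nu x^\gamma\rangle = \Tbq$ whenever $\gamma\notin \Gamma_Z$. I will establish this via the commutator identity
\[ [x^\alpha,\, 1 - \nu x^\gamma] \;=\; \nu\bigl(d(\gamma,\alpha) - d(\alpha,\gamma)\bigr)\,x^{\alpha+\gamma}, \]
which is a one-line computation from \eqref{d.prod}. Because $\gamma \notin \Gamma_Z$, I may choose $\alpha \in \Gamma$ so that $x^\alpha$ does not commute with $x^\gamma$, which forces the scalar coefficient to be nonzero; then the unit $x^{\alpha+\gamma}$ lies in $\langle 1 - \nu x^\gamma\rangle$, so the ideal equals $\Tbq$.

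Combining these observations, any proper binomial ideal $J\ne \Tbq$ of $\Tbq$ admits a generating set of the form $\{1 - \nu_i x^{\gamma_i}\}$ with $\gamma_i \in \Gamma_Z$, all of whose elements lie in $Z(\Tbq)$. The ideal $J_Z$ they generate in $Z(\Tbq)$ is thus a binomial ideal, and by Proposition \ref{idealsTq} we have $J = J_Z\cdot \Tbq$; pulling back along $\phi$ produces the binomial ideal $\bigl\langle z_0 - \nu_i c(\gamma_i)^{-1} z_{\gamma_i} \bigr\rangle$ of $\KK\Gamma_Z$. The edge cases $J=0$ and $J=\Tbq$ correspond respectively to the binomial ideals $0$ and $\langle z_0\rangle = \KK\Gamma_Z$. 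The principal obstacle is the commutator step that eliminates non-central $\gamma$; once this is in hand, the remainder of the argument reduces to Proposition \ref{idealsTq} together with the centrality of the normalized generators.
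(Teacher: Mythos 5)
Your proposal is correct and follows essentially the same route as the paper's proof: normalize each binomial generator by a unit monomial to the form ``monomial minus scalar'' (you write it as $1-\nu x^\gamma$, the paper as $x^{\gamma}-\nu$), use a commutator computation to show that properness forces the exponents $\gamma$ into $\Gamma_Z$, and then transfer the resulting central generators back through $\phi$ via Proposition \ref{idealsTq}. The only cosmetic differences are that you use the ordinary commutator $[x^\alpha, 1-\nu x^\gamma]$ where the paper uses the skew commutator $(x^{\gamma}-\nu)x_i - p_{si}x_i(x^{\gamma}-\nu)$, and you left-factor a unit rather than right-multiplying by $(x^{\beta})^{-1}$; both yield the same conclusion.
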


\begin{proof} Since $\phi$ maps binomials $\lambda z_\alpha + \mu z_\beta$ in $\KK\Gamma_Z$ to binomials $\lambda c(\alpha) x^\alpha + \mu c(\beta) x^\beta$ in $\Tbq$, it is clear that if $I$ is a binomial ideal in $\KK\Gamma_Z$, then $\phi(I) \cdot \Tbq$ is a binomial ideal of $\Tbq$.

Now let $J$ be an arbitrary binomial ideal of $\Tbq$ and $I := \phi^{-1}(J)$. We must show that $I$ is a binomial ideal of $\KK\Gamma_Z$. This is clear if $J = \Tbq$, since then $I = \KK\Gamma_Z$. Hence, assume that $J$ is a proper ideal of $\Tbq$.

Let  $\{ \lambda_s x^{\alpha_s} + \mu_s x^{\beta_s} \mid s \in S \}$
be a set of binomial generators for $J$, for some $\lambda_s, \mu_s \in \KK$, not both zero, and some $\alpha_s, \beta_s \in \Gamma$. If one of $\lambda_s$ or $\mu_s$ is zero, then $\lambda_s x^{\alpha_s} + \mu_s x^{\beta_s}$ is a unit in $\Tbq$, which would force $I = \Tbq$. Thus, $\lambda_s, \mu_s \ne 0$ for $s \in S$. Similarly, $\alpha_s \ne \beta_s$ for $s \in S$.

For $s \in S$, we have
\begin{align*}
x^{\alpha_s} (x^{\beta_s})^{-1} &= x^{\alpha_s} d(\beta_s,\beta_s) x^{-\beta_s} = d(\beta_s,\beta_s) d(\alpha_s, -\beta_s)  x^{\alpha_s - \beta_s} = d(\beta_s - \alpha_s, \beta_s) x^{\alpha_s - \beta_s}  \\
\lambda_s x^{\alpha_s} + \mu_s x^{\beta_s} &= \bigl( \lambda_s x^{\alpha_s} (x^{\beta_s})^{-1} + \mu_s \bigr) x^{\beta_s} = \lambda_s d(\beta_s - \alpha_s, \beta_s) \bigl( x^{\alpha_s - \beta_s} - \nu_s\bigr) x^{\beta_s}
\end{align*}
where $\nu_s := - \lambda_s^{-1} d(\alpha_s - \beta_s, \beta_s) \mu_s$,
and so the generator $\lambda_s x^{\alpha_s} + \mu_s x^{\beta_s}$ may be replaced by $x^{\alpha_s - \beta_s} - \nu_s$. Consequently $J$ has a set of generators of the form $\{ x^{\gamma_s} - \nu_s \mid s \in S \}$, for some $\gamma_s \in \Gamma$ and $\nu_s \in \kx$.

For $s \in S$ and $i \in [ 1,n ]$, there is some $p_{si} \in \kx$ such that $x^{\gamma_s} x_i = p_{si} x_i x^{\gamma_s}$, and so $J$ contains the element
$$(x^{\gamma_s} - \nu_s) x_i - p_{si} x_i (x^{\gamma_s} - \nu_s) = (p_{si}-1) \nu_s x_i \,.$$
Since $J$ is proper, all the $p_{si} = 1$, whence $x^{\gamma_s} \in \Zq$. Thus $\gamma_s \in \Gamma_Z$ for all $s \in S$.

Let $I'$ be the ideal of $\KK\Gamma_Z$ generated by $\{ z_{\gamma_s} - c(\gamma_s) \nu_s \mid s\in S \}$. Since $\phi( z_{\gamma_s} - c(\gamma_s) \nu_s ) = c(\gamma_s) ( x^{\gamma_s} - \nu_s )$, we see that $\phi(I')$ is the ideal of $\Zq$ generated by $\{ x^{\gamma_s} - \nu_s \mid s \in S \}$. Consequently, $\phi(I') \cdot \Tbq = J$, whence $I' = \phi^{-1}(J) = I$ by Proposition \ref{idealsTq}.
Therefore $I$ is a binomial ideal of $\KK\Gamma_Z$, as required.
\end{proof}

\begin{theorem}  \label{binom.Tq}
{\rm(a)} Let $L$ be a sublattice of $\Gamma_Z$ and $\rho \in \Hom(L,\kx)$. Then
\begin{equation}  \label{def.ILrho}
\begin{aligned}
I_0(L,\rho) &:= \langle z_\alpha - \rho(\alpha) \mid \alpha \in L \rangle \vartriangleleft \KK\Gamma_Z  \\
I(L,\rho) &:= \langle x^\alpha - c(\alpha)^{-1} \rho(\alpha) \mid \alpha \in L \rangle \vartriangleleft \Tbq
\end{aligned}
\end{equation}
are proper binomial ideals of $\KK\Gamma_Z$ and $\Tbq$, respectively, and $I(L,\rho) = \phi( I_0(L,\rho) ) \cdot \Tbq$.

{\rm(b)} Let $J$ be a proper binomial ideal of $\Tbq$ and 
$$
L := \{ \alpha \in \Gamma \mid x^\alpha \in \kx + J \}.
$$
Then $L$ is a sublattice of $\Gamma_Z$ and there is a unique $\rho \in \Hom(L,\kx)$ such that $J = I(L,\rho)$.
\end{theorem}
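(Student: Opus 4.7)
\emph{Part (a)} is primarily bookkeeping. The classical Eisenbud--Sturmfels theorem \cite[Theorem 2.1]{EiSt} guarantees that $I_0(L,\rho)$ is a proper binomial ideal of the Laurent polynomial ring $\KK\Gamma_Z$. Using $\phi(z_\alpha) = c(\alpha)x^\alpha$, one computes
$$\phi(z_\alpha - \rho(\alpha)) = c(\alpha)\bigl(x^\alpha - c(\alpha)^{-1}\rho(\alpha)\bigr),$$
so the generators of $\phi(I_0(L,\rho))$ in $\Zq$ differ from those of $I(L,\rho)\cap\Zq$ only by the unit scalars $c(\alpha) \in \kx$. Extending via Proposition \ref{idealsTq} gives $I(L,\rho) = \phi(I_0(L,\rho)) \cdot \Tbq$, and binomiality and properness of $I(L,\rho)$ follow at once.

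For \emph{part (b)}, I would proceed in three steps. The first and main step is to show $L$ is a sublattice of $\Gamma_Z$. Properness of $J$ makes the scalar $\lambda_\alpha \in \kx$ with $x^\alpha \equiv \lambda_\alpha \pmod J$ unique for each $\alpha \in L$. To see $L \subseteq \Gamma_Z$, fix any $i \in [1,n]$ and choose $p \in \kx$ with $x^\alpha x_i = p\, x_i x^\alpha$; left- and right-multiplying $x^\alpha \equiv \lambda_\alpha \pmod J$ by $x_i$ yields $(p-1)x_i x^\alpha \in J$, and since $x_i x^\alpha$ is a unit in $\Tbq$ while $J$ is proper, $p=1$, whence $\alpha \in \Gamma_Z$. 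Closure of $L$ under addition and negation follows from \eqref{d.prod} and \eqref{yalpha.inv}, and these calculations yield $\lambda_{\alpha+\beta} = d(\alpha,\beta)^{-1}\lambda_\alpha\lambda_\beta$. Next, define $\rho(\alpha) := c(\alpha)\lambda_\alpha$, and apply the cocycle identity \eqref{d.ab} to obtain
$$\rho(\alpha+\beta) = c(\alpha+\beta)d(\alpha,\beta)^{-1}\lambda_\alpha\lambda_\beta = c(\alpha)c(\beta)\lambda_\alpha\lambda_\beta = \rho(\alpha)\rho(\beta),$$
so $\rho \in \Hom(L,\kx)$.

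Finally, I would verify $J = I(L,\rho)$. The inclusion $I(L,\rho) \subseteq J$ is immediate, since $x^\alpha - c(\alpha)^{-1}\rho(\alpha) = x^\alpha - \lambda_\alpha \in J$ for each $\alpha \in L$. For the reverse inclusion, the proof of Lemma \ref{binom.biject} exhibits generators of $J$ of the form $x^{\gamma_s} - \nu_s$ with $\gamma_s \in \Gamma_Z$ and $\nu_s \in \kx$; each such $\gamma_s$ lies in $L$ with $\lambda_{\gamma_s} = \nu_s$, so each generator of $J$ coincides with a generator of $I(L,\rho)$. Uniqueness of $\rho$ is then forced by $\rho(\alpha) = c(\alpha)\lambda_\alpha$, since $\lambda_\alpha$ depends only on $J$. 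The main obstacle is the containment $L \subseteq \Gamma_Z$ in the sublattice step---the one point where the noncommutativity of $\Tbq$ interacts essentially with the properness of $J$; the rest is routine bookkeeping controlled by the cocycle identity \eqref{d.ab}.
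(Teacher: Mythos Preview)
Your proposal is correct and follows essentially the same route as the paper's proof: the same computation $\phi(z_\alpha - \rho(\alpha)) = c(\alpha)(x^\alpha - c(\alpha)^{-1}\rho(\alpha))$ for part (a), and for part (b) the same definition of the scalars $\lambda_\alpha$ (the paper calls them $\rho'(\alpha)$), the same commutator argument forcing $L \subseteq \Gamma_Z$, the same use of \eqref{d.ab} to show $\rho(\alpha) := c(\alpha)\lambda_\alpha$ is a homomorphism, and the same appeal to the generators $x^{\gamma_s} - \nu_s$ from the proof of Lemma \ref{binom.biject} for the inclusion $J \subseteq I(L,\rho)$. The only cosmetic differences are that you invoke \cite[Theorem 2.1]{EiSt} for properness of $I_0(L,\rho)$ where the paper argues directly via freeness of $\KK\Gamma_Z$ over $\KK L$, and you establish $L \subseteq \Gamma_Z$ before the closure properties rather than after.
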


\begin{proof}
(a) Observe that the set $\{ z_\alpha - \rho(\alpha) \mid \alpha \in L \}$ generates a maximal ideal $M$ of $\KK L$ and $M \cdot \KK\Gamma_Z = I_0(L,\rho)$. Since 
\begin{equation}  \label{phi.I0Lrho}
\phi( z_\alpha - \rho(\alpha) ) = c(\alpha) (x^\alpha - c(\alpha)^{-1} \rho(\alpha)) \qquad \forall\,\alpha \in L,
\end{equation}
we see that 
$I(L,\rho) = \phi(M) \cdot \Tbq = \phi( I_0(L,\rho) ) \cdot \Tbq$.
Since $\KK\Gamma_Z$ is a free left $\KK L$-module (e.g., \cite[Lemma 1.1.3]{Pas}), we have
$$\KK\Gamma_Z / I_0(L,\rho) \cong (\KK L / M) \otimes_{\KK L} \KK\Gamma_Z \ne 0,$$
and so $I_0(L,\rho)$ is a proper ideal of $\KK\Gamma_Z$. It then follows from Proposition \ref{idealsTq} that $I(L,\rho)$ is a proper ideal of $\Tbq$.

(b) For $\alpha \in L$, there exists $\rho'(\alpha) \in \kx$ such that $x^\alpha - \rho'(\alpha) \in J$, and $\rho'(\alpha)$ is unique because $J$ is proper. For $\alpha,\beta \in L$, we have
$$
x^\alpha x^\beta - \rho'(\alpha) \rho'(\beta) = (x^\alpha - \rho'(\alpha)) x^\beta + \rho'(\alpha) (x^\beta - \rho'(\beta)) \in J,
$$
whence $x^{\alpha+\beta} - d(\alpha,\beta)^{-1} \rho'(\alpha) \rho'(\beta) \in J$. Consequently, 
\begin{equation}  \label{L,+}
\alpha+\beta \in L \qquad\text{and}\qquad \rho'(\alpha+\beta) = d(\alpha,\beta)^{-1} \rho'(\alpha) \rho'(\beta) \qquad \forall\; \alpha,\beta \in L.
\end{equation}
Since we also have $x^0 - 1 \in J$ and 
$$
x^{-\alpha} - \rho'(\alpha)^{-1} d(-\alpha, \alpha) = - \rho'(\alpha)^{-1} x^{-\alpha} (x^\alpha - \rho'(\alpha)) \in J \qquad \forall\; \alpha \in L,
$$
we find that $L$ is a sublattice of $\Gamma$. The argument of Lemma \ref{binom.biject} shows that $L \subseteq \Gamma_Z$.

Set $\rho(\alpha) := c(\alpha) \rho'(\alpha)$ for $\alpha \in L$. In view of \eqref{L,+} and \eqref{d.ab},
$$
\rho(\alpha+\beta) = c(\alpha+\beta) d(\alpha,\beta)^{-1} \rho'(\alpha) \rho'(\beta) = c(\alpha) c(\beta) \rho'(\alpha) \rho'(\beta) = \rho(\alpha) \rho(\beta)
$$
for $\alpha, \beta \in L$. Thus, $\rho \in \Hom(L,\kx)$.

By part (a), $I(L,\rho)$ is a proper binomial ideal of $\Tbq$, and $I(L,\rho) \subseteq J$ by definition of $\rho$ and $\rho'$. The proof of Lemma \ref{binom.biject} shows that $J$ is generated by a set $\{ x^{\gamma_s} - \nu_s \mid s \in S \}$ of binomials, for some $\gamma_s \in \Gamma$ and $\nu_s \in \kx$. Each $\gamma_s \in L$ and $\nu_s = \rho'(\gamma_s)$, whence $x^{\gamma_s} - \nu_s = x^{\gamma_s} - c(\gamma_s)^{-1} \rho(\gamma_s) \in I(L,\rho)$. Therefore $J = I(L,\rho)$.

Concerning the uniqueness of $\rho$, suppose that $J = I(L,\sigma)$ for some $\sigma \in \Hom(L,\kx)$. For $\alpha \in L$, we then have $x^\alpha - c(\alpha)^{-1} \sigma(\alpha) \in J$, whence $\rho'(\alpha) = c(\alpha)^{-1} \sigma(\alpha)$ and $\rho(\alpha) = \sigma(\alpha)$.  Therefore $\rho = \sigma$.
\end{proof}

The processes described in Theorem \ref{binom.Tq} actually give inverse bijections, as we show in Theorem \ref{biject.pairs.binom.Tq}.

\begin{remark}  \label{avoid.c}
The appearance of the scalars $c(\alpha)$ in the definition of $I(L,\rho)$ is due to the choice of a splitting of the subgroup $\kx x^L := \{ \lambda x^\alpha \mid \lambda \in \kx,\; \alpha \in L \}$ of $U(\Zq)$. Namely, there is a surjective group homomorphism $\kx x^L \rightarrow L$ given by $\lambda x^\alpha \mapsto \alpha$, which has a right inverse $\xi : L \rightarrow \kx x^L$ given by $\alpha \mapsto c(\alpha) x^\alpha$. It follows from \eqref{d.prod} and \eqref{d.ab} that $\xi$ is a homomorphism. Transferring $\rho$ to $\xi(L)$ via $\xi$ and using the splitting $\kx x^L = \kx \times \xi(L)$, we obtain a homomorphism $\rhotil : \kx x^L \rightarrow \kx$ such that $\rhotil|_{\kx} = \id_{\kx}$ and $\rhotil(x^\alpha) = c(\alpha)^{-1} \rho(\alpha)$ for $\alpha \in L$.

Thus, the proper binomial ideals of $\Tbq$ may be expressed in the form
$$
\langle u - \rhotil(u) \mid u \in \kx x^L \rangle
$$
for sublattices $L$ of $\Gamma_Z$ and $\rhotil \in \Hom(\kx x^L, \kx)$ such that $\rhotil|_{\kx} = \id_{\kx}$. This allows these ideals to be expressed independently of the choice of the isomorphism $\phi$ in \eqref{def.phi}.
\end{remark}

Since $\Tbq$ is noetherian, each $I(L,\rho)$ is finitely generated. Such an ideal can be generated by binomials corresponding to a basis for $L$, as follows.

\begin{lemma}  \label{ILrho.basis}
Let $L$ be a sublattice of $\Gamma_Z$ and $\rho \in \Hom(L,\kx)$. If $(\beta_1,\dots,\beta_r)$ is a basis for $L$, then $I_0(L,\rho)$ is generated by $\{ z_{\beta_l} - \rho(\beta_l) \mid l \in [1,r] \}$ and $I(L,\rho)$ is generated by $\{ x^{\beta_l} - c(\beta_l)^{-1} \rho(\beta_l) \mid l \in [1,r] \}$.  Moreover,
\begin{equation}  \label{reg.seq.ILrho}
x^{\beta_1} - c(\beta_1)^{-1} \rho(\beta_1),\, \dots,\, x^{\beta_r} - c(\beta_r)^{-1} \rho(\beta_r)
\end{equation}
is a regular sequence in $\Tbq$.
\end{lemma}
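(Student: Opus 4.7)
The plan is to establish the two generation claims by elementary computations in $\KK\Gamma_Z$ and then transfer through $\phi$, and then to reduce the regular sequence assertion --- via Corollary \ref{hts.Tq}(c) and the isomorphism $\phi$ --- to the analogous statement in the commutative Laurent polynomial subring $\KK L = \KK[z_{\beta_1}^{\pm1},\dots,z_{\beta_r}^{\pm1}]$ of $\KK\Gamma_Z$, where it is routine.

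For the first generation claim, I would let $N := \langle z_{\beta_l} - \rho(\beta_l) \mid l \in [1,r] \rangle \vartriangleleft \KK\Gamma_Z$. The inclusion $N \subseteq I_0(L,\rho)$ is clear, so I only need $z_\alpha - \rho(\alpha) \in N$ for every $\alpha \in L$. Modulo $N$ one has $z_{\beta_l} \equiv \rho(\beta_l)$; because $z_{\beta_l}$ is a unit with inverse $z_{-\beta_l}$, also $z_{-\beta_l} \equiv \rho(-\beta_l)$. Writing $\alpha = \sum_l m_l \beta_l$ and iterating $z_\gamma z_\delta = z_{\gamma+\delta}$ then yields $z_\alpha \equiv \prod_l \rho(\beta_l)^{m_l} = \rho(\alpha)$ modulo $N$, as needed. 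The second generation claim follows immediately, since Theorem \ref{binom.Tq}(a) gives $I(L,\rho) = \phi(I_0(L,\rho))\cdot\Tbq$ and $\phi(z_{\beta_l}-\rho(\beta_l)) = c(\beta_l)(x^{\beta_l} - c(\beta_l)^{-1}\rho(\beta_l))$ with $c(\beta_l) \in \kx$.

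For the regular sequence assertion, I would set $f_l := x^{\beta_l} - c(\beta_l)^{-1}\rho(\beta_l) \in Z(\Tbq)$, $g_l := z_{\beta_l} - \rho(\beta_l) \in \KK\Gamma_Z$, and $J_i := \langle f_1,\dots,f_i\rangle\vartriangleleft\Tbq$. Applying the generation claims just proved to the sublattice $L_i := \ZZ\beta_1 + \cdots + \ZZ\beta_i$ with basis $(\beta_1,\dots,\beta_i)$ yields $J_i = I(L_i,\rho|_{L_i}) = \phi(I_0(L_i,\rho|_{L_i}))\cdot\Tbq$, and then Proposition \ref{idealsTq} gives $J_i \cap Z(\Tbq) = \langle f_1,\dots,f_i\rangle_{Z(\Tbq)}$. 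Since $f_{i+1}$ is central, Corollary \ref{hts.Tq}(c) reduces regularity of $f_{i+1}$ modulo $J_i$ in $\Tbq$ to regularity of $f_{i+1}$ modulo $\langle f_1,\dots,f_i\rangle$ in $Z(\Tbq)$; transporting through $\phi$, this becomes regularity of $g_{i+1}$ modulo $\langle g_1,\dots,g_i\rangle$ in $\KK\Gamma_Z$.

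It remains to check that $(g_1,\dots,g_r)$ is a regular sequence in $\KK\Gamma_Z$. Inside $\KK L$ this is elementary: each quotient $\KK L / \langle g_1,\dots,g_i\rangle \cong \KK[z_{\beta_{i+1}}^{\pm1},\dots,z_{\beta_r}^{\pm1}]$ is a domain, so $g_{i+1}$ is a non-zero-divisor there. Since $\KK\Gamma_Z$ is free --- hence flat --- as a $\KK L$-module via any transversal for $L$ in $\Gamma_Z$, the standard fact that flat base change preserves regular sequences (tensor each injective multiplication map with $\KK\Gamma_Z$) shows that $(g_1,\dots,g_r)$ remains regular in $\KK\Gamma_Z$, finishing the proof. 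The main obstacle, modest as it is, lies in correctly assembling this chain of reductions --- through $\phi$, Corollary \ref{hts.Tq}(c), and flatness over $\KK L$ --- so that the problem collapses onto the essentially trivial Laurent polynomial calculation.
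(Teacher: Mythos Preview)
Your argument is correct and follows the same overall structure as the paper's: prove the generation claim in $\KK\Gamma_Z$ and transfer via $\phi$ and Theorem \ref{binom.Tq}(a), then reduce the regular sequence assertion from $\Tbq$ to $Z(\Tbq)$ to $\KK\Gamma_Z$ using Proposition \ref{idealsTq} and Corollary \ref{hts.Tq}(c). The one point of divergence is the final step: the paper simply cites \cite[Theorem 2.1(b)]{EiSt} for the fact that $(z_{\beta_1}-\rho(\beta_1),\dots,z_{\beta_r}-\rho(\beta_r))$ is a regular sequence in $\KK\Gamma_Z$, whereas you supply a self-contained argument by first checking regularity in the Laurent polynomial subring $\KK L$ (where the successive quotients are visibly domains) and then invoking freeness of $\KK\Gamma_Z$ over $\KK L$ to pass the regular sequence up. Your route avoids the external reference at the cost of a few extra lines; otherwise the two proofs coincide.
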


\begin{proof}
For the first part, it suffices, in view of Theorem \ref{binom.Tq} and equation \eqref{phi.I0Lrho}, to prove that $I_0(L,\rho)$ equals the ideal 
$$I'_0 := \langle z_{\beta_l} - \rho(\beta_l) \mid l \in [1,r] \rangle \vartriangleleft \KK \Gamma_Z\,.$$
Observe that the set $L' := \{ \alpha \in L \mid z_\alpha - \rho(\alpha) \in I'_0 \}$ is a sublattice of $L$ containing the $\beta_l$. Therefore $L' = L$, whence $I'_0 = I_0(L,\rho)$.

By \cite[Theorem 2.1(b)]{EiSt}, $z_{\beta_1} - \rho(\beta_1), \dots, z_{\beta_r} - \rho(\beta_r)$ is a regular sequence in $\KK \Gamma_Z$, and so it follows from \eqref{phi.I0Lrho} that \eqref{reg.seq.ILrho} is a regular sequence in $Z(\Tbq)$. Proposition \ref{idealsTq} and Corollary \ref{hts.Tq}(c) thus imply that \eqref{reg.seq.ILrho} is a regular sequence in $\Tbq$.
\end{proof}

Geometrically, \cite[Theorem 2.1(b)]{EiSt} says that $\KK \Gamma_Z / I_0(L,\rho)$ is (the coordinate ring of) an affine complete intersection.  One could thus say that Lemma \ref{ILrho.basis} shows that $\Tbq/I(L,\rho)$ is a noncommutative affine complete intersection.

Quotients of $\Tbq$ and $\Zq$ by proper binomial ideals may be expressed in terms of (cocycle-)twisted group algebras, as follows. By \cite[Proposition 1.4.2]{NVO}, for instance, a $\KK$-algebra $T$ is a twisted group algebra of an additive group $G$ over $\KK$ if and only if $T$ has a $\KK$-basis $(t_g \mid g \in G)$ such that $t_g t_h \in \kx t_{g+h}$ for all $g,h \in G$.

\begin{proposition}  \label{Irho.ideals}
Let $L$ be a sublattice of $\Gamma_Z$ and $\rho \in \Hom(L,\kx)$, and set
\begin{equation}  \label{def.IZLrho}
I_Z(L,\rho) := \langle x^\alpha - c(\alpha)^{-1} \rho(\alpha) \mid \alpha \in L \rangle \vartriangleleft \Zq.
\end{equation}

{\rm(a)} $I_Z(L,\rho) = I(L,\rho) \cap \Zq$.

{\rm(b)} $\Zq/I_Z(L,\rho)$ is a twisted group algebra of $\Gamma_Z/L$ over $\KK$, with a $\KK$-basis of the form $( x^\tau + I_Z(L,\rho) \mid \tau \in T )$ where $T$ is any transversal for $L$ in $\Gamma_Z$.

{\rm(c)} $\Tbq/I(L,\rho)$ is a twisted group algebra of $\Gamma/L$ over $\KK$, with a $\KK$-basis of the form $( x^\sigma + I(L,\rho) \mid \sigma \in S )$ where $S$ is any transversal for $L$ in $\Gamma$.
\end{proposition}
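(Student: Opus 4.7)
For part (a), the natural approach is to exploit Theorem \ref{binom.Tq}(a) and Proposition \ref{idealsTq}. Since $I(L,\rho) = \phi(I_0(L,\rho)) \cdot \Tbq$, its contraction to $\Zq$ equals $\phi(I_0(L,\rho))$ by the bijection between ideals of $\Zq$ and ideals of $\Tbq$. Equation \eqref{phi.I0Lrho} shows that the images $\phi(z_\alpha - \rho(\alpha)) = c(\alpha)(x^\alpha - c(\alpha)^{-1}\rho(\alpha))$ generate the same ideal in $\Zq$ as $\{\, x^\alpha - c(\alpha)^{-1}\rho(\alpha) \mid \alpha \in L \,\}$, since each $c(\alpha) \in \kx$. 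Thus $I(L,\rho) \cap \Zq = \phi(I_0(L,\rho)) = I_Z(L,\rho)$.

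For part (b), I would transport the basis across $\phi$. By a standard group-algebra fact (e.g., \cite[Lemma 1.1.3]{Pas}), $\KK\Gamma_Z$ is a free left $\KK L$-module with basis $(z_\tau \mid \tau \in T)$ for any transversal $T$ of $L$ in $\Gamma_Z$. Now $I_0(L,\rho) = M \cdot \KK\Gamma_Z$ where $M$ is the maximal ideal of $\KK L$ generated by $\{\, z_\alpha - \rho(\alpha) \mid \alpha \in L \,\}$ and $\KK L / M \cong \KK$; freeness then delivers a $\KK$-basis $(\overline{z_\tau} \mid \tau \in T)$ for $\KK\Gamma_Z / I_0(L,\rho)$. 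Applying $\phi$ and invoking part (a), this yields the basis $(c(\tau) x^\tau + I_Z(L,\rho) \mid \tau \in T)$ of $\Zq/I_Z(L,\rho)$, which, after scaling by the units $c(\tau)^{-1} \in \kx$, is the claimed basis. For the twisted group algebra structure, given $\tau_1, \tau_2 \in T$, write $\tau_1 + \tau_2 = \alpha + \tau_3$ with $\alpha \in L$ and $\tau_3 \in T$; then a short computation using \eqref{d.prod} together with $x^\alpha \equiv c(\alpha)^{-1}\rho(\alpha) \pmod{I_Z(L,\rho)}$ expresses $x^{\tau_1} x^{\tau_2}$ as a nonzero scalar multiple of $x^{\tau_3}$ modulo $I_Z(L,\rho)$.

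Part (c) runs the same pattern one level up. Since $\Tbq$ is a free right $\Zq$-module on $(x^\kappa \mid \kappa \in K)$ for any transversal $K$ of $\Gamma_Z$ in $\Gamma$ (as recalled in \S\ref{ZTq.items}), and $I(L,\rho) = I_Z(L,\rho) \cdot \Tbq$ by part (a), the quotient $\Tbq / I(L,\rho)$ inherits a $\KK$-basis formed by the products of the $x^\kappa$ with the basis from part (b); since $x^\kappa x^\tau$ is a scalar multiple of $x^{\kappa+\tau}$, this produces the desired basis for the particular transversal $K+T$ of $L$ in $\Gamma$. For an arbitrary transversal $S$, each $\sigma \in S$ equals $\alpha + \kappa + \tau$ for unique $\alpha \in L$, $\kappa \in K$, $\tau \in T$, and reducing $x^\alpha$ modulo $I(L,\rho)$ shows that $x^\sigma$ and $x^{\kappa+\tau}$ differ by a nonzero scalar in the quotient, so $(x^\sigma + I(L,\rho) \mid \sigma \in S)$ is a basis. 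The twisted group algebra structure follows from the same product computation as in (b). The argument is essentially careful bookkeeping; the only nontrivial inputs are the two freeness statements just cited, and the main care required is in tracking the scalar corrections involving $c$, $d$, and $\rho$ so that they fit together consistently.
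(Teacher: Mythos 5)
Your proposal is correct and follows essentially the same route as the paper: part (a) via the extension--contraction bijection of Proposition \ref{idealsTq}, part (b) via freeness of $\KK\Gamma_Z$ over $\KK L$ on a transversal, and part (c) via freeness of $\Tbq$ over $\Zq$, the transversal $K+T$, and the unit-scalar comparison for an arbitrary transversal. The only cosmetic difference is that you verify the twisted group algebra structure in (b) by a direct product computation where the paper cites \cite[Lemma 1.2.2]{Pas}.
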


\begin{proof} 
(a) Observe that since $I(L,\rho) = I_Z(L,\rho) \cdot \Tbq$, it follows from Proposition \ref{idealsTq} that  $I(L,\rho) \cap \Zq = I_Z(L,\rho)$.

(b) As in the proof of Theorem \ref{binom.Tq}, the binomials $z_\alpha - \rho(\alpha)$ for $\alpha \in L$ generate a maximal ideal $M$ of $\KK L$ (of codimension $1$) and $\phi(M) \cdot \Zq = I_Z(L,\rho)$. By \cite[Lemma 1.2.2 and proof]{Pas}, $\KK\Gamma_Z / M\cdot \KK\Gamma_Z$ is a twisted group algebra of $\Gamma_Z/L$ over $\KK L/M = \KK$ and $(z_\tau + M\cdot \KK\Gamma_Z \mid \tau \in T)$ is a $\KK$-basis for $\KK\Gamma_Z / M\cdot \KK\Gamma_Z$. Part (b) follows.

(c) Let $T$ be a transversal for $L$ in $\Gamma_Z$ and $S'$ a transversal for $\Gamma_Z$ in $\Gamma$, and note that $S := S' + T$ is a transversal for $L$ in $\Gamma$. As noted in \S\ref{ZTq.items}, $\Tbq$ is a free left $\Zq$-module with basis $(x^\sigma \mid \sigma \in S')$. Since
$$\Tbq/I(L,\rho) = \Tbq/ I_Z(L,\rho) \cdot \Tbq \cong \bigl( \Zq/ I_Z(L,\rho) \bigr) \otimes_{\Zq} \Tbq \,,$$
it follows that 
$$( x^\sigma + I(L,\rho) \mid \sigma \in S ) = ( d(\sigma',\tau)^{-1} x^{\sigma'} x^\tau + I(L,\rho) \mid \sigma' \in S',\; \tau \in T )$$
is a $\KK$-basis for $\Tbq/I(L,\rho)$. 

Suppose that $S_1$ is another transversal for $L$ in $\Gamma$. There is a bijection $\pi : S_1 \rightarrow S$ such that $\sigma+L = \pi(\sigma)+L$ for all $\sigma \in S_1$. Given any $\sigma \in S_1$, we have $\alpha := \sigma - \pi(\sigma) \in L$ and so
$$x^\sigma = x^{\pi(\sigma)+\alpha} = d(\pi(\sigma),\alpha)^{-1} x^{\pi(\sigma)} x^\alpha \equiv d(\pi(\sigma),\alpha)^{-1} c(\alpha)^{-1} \rho(\alpha) x^{\pi(\sigma)} \pmod{I(L,\rho)},$$
whence $x^\sigma + I(L,\rho) \in \kx (x^{\pi(\sigma)} + I(L,\rho) ).$ Consequently, $( x^\sigma + I(L,\rho) \mid \sigma \in S_1 )$ is also a $\KK$-basis for $\Tbq/I(L,\rho)$.

Set $[\sigma] := \sigma + L$ and $x_{[\sigma]} := x^\sigma + I(L,\rho)$ for $\sigma \in S$, so that $\sigma \mapsto [\sigma]$ is a bijection from $S$ onto $\Gamma/L$ and $( x_{[\sigma]} \mid \sigma \in S )$ is a $\KK$-basis for $\Tbq/I(L,\rho)$. For any $\sigma, \sigma' \in S$, we have $\sigma + \sigma' = \gamma + \alpha$ for some $\gamma \in S$ and $\alpha \in L$, whence
\begin{align*}
x_{[\sigma]} x_{[\sigma']} &= d(\sigma,\sigma') x^{\sigma+\sigma'} + I(L,\rho) = d(\sigma,\sigma') d(\gamma,\alpha)^{-1} x^\gamma x^\alpha + I(L,\rho)  \\
&= d(\sigma,\sigma') d(\gamma,\alpha)^{-1} c(\alpha)^{-1} \rho(\alpha) x^\gamma + I(L,\rho)  \\
&= d(\sigma,\sigma') d(\gamma,\alpha)^{-1} c(\alpha)^{-1} \rho(\alpha) x_{[\sigma]+[\sigma']}\,.
\end{align*}
It follows that $\Tbq/I(L,\rho)$ is a twisted group algebra of $\Gamma/L$ over $\KK$.
\end{proof}

\begin{theorem}  \label{KcG=Tq/J}
Let $R$ be a unital $\KK$-algebra. Then $R$ is isomorphic to a twisted group algebra $\KK^eG$, for some finitely generated abelian group $G$ and some $2$-cocycle $e : G \times\nobreak G \rightarrow \kx$, if and only if $R \cong \Tbq/J$ for some multiplicatively skew-symmetric matrix $\bfq \in M_n(\kx)$, some $n \in \Zpos$, and some proper binomial ideal $J$ of $\Tbq$.
\end{theorem}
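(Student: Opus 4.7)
The backward direction is essentially immediate from the machinery already built. If $R \cong \Tbq/J$ with $J$ a proper binomial ideal, Theorem \ref{binom.Tq}(b) writes $J = I(L, \rho)$ for a sublattice $L \subseteq \Gamma_Z$ and a character $\rho \in \Hom(L, \kx)$, and then Proposition \ref{Irho.ideals}(c) realizes $\Tbq/J$ as a twisted group algebra of the finitely generated abelian group $\Gamma/L = \ZZ^n/L$.

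For the forward direction, suppose $R \cong \KK^e G$ for a finitely generated abelian group $G$ with $2$-cocycle $e$, and let $(u_g)_{g \in G}$ be the associated $\KK$-basis with $u_g u_h \in \kx u_{g+h}$. Rescaling if necessary, I may assume $u_0 = 1$, so each $u_g$ is a unit in $R$. Choose generators $g_1, \dots, g_n$ of $G$ (taking $n \ge 1$, padding with redundant generators if need be), and set $q_{ij} := e(g_i, g_j) e(g_j, g_i)^{-1}$. The matrix $\bfq := (q_{ij}) \in M_n(\kx)$ is multiplicatively skew-symmetric, and $u_{g_i} u_{g_j} = q_{ij} u_{g_j} u_{g_i}$, so the assignment $x_i \mapsto u_{g_i}$ extends to a $\KK$-algebra homomorphism $\psi : \Tbq \to R$. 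Letting $\pi : \Gamma \to G$ be defined by $e_i \mapsto g_i$, one has $\psi(\kx x^\alpha) = \kx u_{\pi(\alpha)}$, so $\psi$ is surjective (because the $g_i$ generate $G$). Setting $J := \ker \psi$ yields a proper ideal, and the remaining task is to show it is binomial.

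Let $L := \ker \pi$. For $\alpha \in L$, $\psi(x^\alpha) \in \kx u_0 = \kx$; write $\psi(x^\alpha) = \lambda_\alpha$. The key claims are that $L \subseteq \Gamma_Z$ and that $\rho(\alpha) := c(\alpha) \lambda_\alpha$ defines a homomorphism in $\Hom(L, \kx)$. For the first: given $\alpha \in L$ and $j \in [1, n]$, the relation $x^\alpha x_j = p\, x_j x^\alpha$ (for some $p \in \kx$ determined by $\bfq$) yields $\lambda_\alpha u_{g_j} = p \lambda_\alpha u_{g_j}$ after applying $\psi$, forcing $p = 1$ and hence centrality of $x^\alpha$. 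For the second: $\psi(x^\alpha x^\beta) = d(\alpha, \beta) \psi(x^{\alpha + \beta})$ gives $\lambda_{\alpha+\beta} = d(\alpha, \beta)^{-1} \lambda_\alpha \lambda_\beta$, and substituting \eqref{d.ab} (applicable since $L \subseteq \Gamma_Z$) then collapses to $\rho(\alpha+\beta) = \rho(\alpha) \rho(\beta)$.

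With these in place, Theorem \ref{binom.Tq}(a) produces the proper binomial ideal $I(L, \rho) \vartriangleleft \Tbq$, whose generators $x^\alpha - c(\alpha)^{-1} \rho(\alpha) = x^\alpha - \lambda_\alpha$ all lie in $J$; hence $I(L, \rho) \subseteq J$. By Proposition \ref{Irho.ideals}(c), $\Tbq/I(L, \rho)$ admits the $\KK$-basis $(x^\sigma + I(L, \rho))_{\sigma \in S}$ for any transversal $S$ of $L$ in $\Gamma$, and the induced surjection onto $R$ sends these basis elements to $\lambda_\sigma u_{\pi(\sigma)}$. Since $\pi|_S : S \to G$ is a bijection and each $\lambda_\sigma \ne 0$, this map carries one $\KK$-basis to another (up to scalars), hence is a $\KK$-linear isomorphism; therefore $J = I(L, \rho)$ is binomial, as required. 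The main technical step will be verifying the two auxiliary claims about $L$ and $\rho$, both of which reduce to short consequences of properness of $J$ together with \eqref{d.ab}.
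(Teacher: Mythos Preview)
Your proof is correct and follows the same approach as the paper's: construct the surjection $\psi : \Tbq \to R$ from a choice of generators, identify $\ker\psi$ with $I(L,\rho)$ for $L = \ker\pi$, and conclude by matching $\KK$-bases across a transversal via Proposition~\ref{Irho.ideals}(c). The only cosmetic difference is that you verify $L \subseteq \Gamma_Z$ and the homomorphism property of $\rho$ by hand, whereas the paper first names the binomial ideal generated by $\{x^\alpha - \mu(\alpha)\mu(0)^{-1} : \alpha \in \ker\pi\}$ and invokes Theorem~\ref{binom.Tq}(b) to extract the lattice and character (then recovering $L = \ker\pi$ from the transversal comparison).
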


\begin{proof}
$(\Longleftarrow)$: Theorem \ref{binom.Tq}(b) and Proposition \ref{Irho.ideals}(c).

$(\Longrightarrow)$: By assumption, $R$ has a $\KK$-basis $(y_g \mid g \in G)$ such that the $y_g$ are units in $R$ and $y_g y_h = e(g,h) y_{g+h}$ for $g,h \in G$. From the case $g=h=0$, it follows that $y_0 = e(0,0)\cdot1_R$. Let $\{g_1,\dots,g_n\}$ be a finite set of generators for $G$, and set $z_i := y_{g_i}$ and $q_{ij} := e(g_i,g_j) e(g_j,g_i)^{-1}$ for $i,j \in [1,n]$. Then $\bfq := (q_{ij}) \in M_n(\kx)$ is a multiplicatively skew-symmetric matrix and $z_i z_j = q_{ij} z_j z_i$ for all $i$, $j$.  Now there is a unital surjective $\KK$-algebra homomorphism $\theta : \Tbq \rightarrow R$ such that $\theta(x_i) = z_i$ for all $i$.  Note that $\ker \theta$ is a proper ideal of $\Tbq$.

Let $\pi : \Gamma \rightarrow G$ be the group homomorphism given by the rule $\pi(\alpha) = g_1^{\alpha_1} \cdots g_n^{\alpha_n}$. For each $\alpha \in \Gamma$, we have
$$
\theta(x^\alpha) = \theta( x_1^{\alpha_1} \cdots x_n^{\alpha_n} ) = y_{g_1}^{\alpha_1} \cdots y_{g_n}^{\alpha_n} = \mu(\alpha) y_{\pi(\alpha)} \qquad \text{for some}\; \mu(\alpha) \in \kx.
$$
In particular, for $\alpha \in \ker \pi$ we have $\theta(x^\alpha) = \mu(\alpha) y_0 = \mu(\alpha) \mu(0)^{-1} \theta(1)$, and so $\ker \theta$ contains the binomial ideal
$$
J := \langle x^\alpha - \mu(\alpha) \mu(0)^{-1} \mid \alpha \in \ker \pi \rangle \vartriangleleft \Tbq \,.
$$
By Theorem \ref{binom.Tq}(b), $J = I(L,\rho)$ where $L := \{ \alpha \in \Gamma \mid x^\alpha \in \kx+J \}$ is a sublattice of $\Gamma_Z$ and $\rho \in \Hom(L,\kx)$. Note that $\ker \pi \subseteq L$ by definition of $J$.

Let $S$ be a transversal for $L$ in $\Gamma$, and enlarge $S$ to a transversal $T$ for $\ker \pi$ in $\Gamma$. Since $\pi$ maps $T$ bijectively onto $G$, the map $\theta$ sends the family $(x^\tau \mid \tau \in T)$ to a $\KK$-basis for $R$, and so $(x^\tau \mid \tau \in T)$ is $\KK$-linearly independent module $\ker \theta$, hence also $\KK$-linearly independent modulo $J$. On the other hand, Proposition \ref{Irho.ideals}(c) shows that $(x^\sigma \mid \sigma \in S)$ is a $\KK$-basis for $\Tbq/J$. This forces $T = S$, and so the induced homomorphism $\thetabar : \Tbq/J \rightarrow R$ sends a $\KK$-basis for $\Tbq/J$ to a $\KK$-basis for $R$. Consequently, $\thetabar$ is an isomorphism, and therefore $\ker \theta = J$.
\end{proof}

We use the results of Proposition \ref{Irho.ideals} to complete the bijection initiated in Theorem \ref{binom.Tq}, as follows.

\begin{theorem}  \label{biject.pairs.binom.Tq}
Let $\calL$ denote the set of pairs $(L,\rho)$ where $L$ is a sublattice of $\Gamma_Z$ and $\rho \in \Hom(L,\kx)$. Then there exists a bijection
\begin{align*}
I : \calL &\longrightarrow \{\;\text{proper binomial ideals of}\; \Tbq\;\}  \\
(L,\rho) &\longmapsto I(L,\rho).
\end{align*}
The inverse bijection sends a proper binomial ideal $J$ of $\Tbq$ to $(L,\rho)$ where
$$L = \{ \alpha \in \Gamma \mid x^\alpha \in \kx + J \},$$
and for each $\alpha \in L$, $\rho(\alpha)$ is the unique scalar in $\kx$ such that $x^\alpha - c(\alpha)^{-1} \rho(\alpha) \in J$.
\end{theorem}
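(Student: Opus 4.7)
The plan is to verify that the two assignments are mutually inverse. The ``backward then forward'' composition---starting from a proper binomial ideal $J$, producing $(L,\rho)$ via the given formulas, and then forming $I(L,\rho) = J$---is exactly the content of Theorem \ref{binom.Tq}(b), which simultaneously guarantees that the inverse map is well-defined. Hence the only remaining work is the ``forward then backward'' composition: given $(L,\rho) \in \calL$, set $J := I(L,\rho)$ and verify that $L^* := \{\alpha \in \Gamma \mid x^\alpha \in \kx + J\}$ coincides with $L$ and that the recovered character $\rho^*$ equals $\rho$.

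The containment $L \subseteq L^*$ is immediate from the definition \eqref{def.ILrho}: for $\alpha \in L$, the element $x^\alpha - c(\alpha)^{-1}\rho(\alpha)$ is a generator of $I(L,\rho) = J$, so $x^\alpha \in \kx + J$.

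The reverse containment $L^* \subseteq L$ is the crux, and the key tool is Proposition \ref{Irho.ideals}(c). Fix a transversal $S$ for $L$ in $\Gamma$ with $0 \in S$, so that $(x^\sigma + J \mid \sigma \in S)$ is a $\KK$-basis for $\Tbq/J$. Given $\alpha \in L^*$, decompose $\alpha = \sigma + \beta$ with $\sigma \in S$ and $\beta \in L$. Using \eqref{d.prod} together with the congruence $x^\beta \equiv c(\beta)^{-1} \rho(\beta) \pmod{J}$, we obtain
$$x^\alpha = d(\sigma,\beta)^{-1} x^\sigma x^\beta \equiv d(\sigma,\beta)^{-1} c(\beta)^{-1}\rho(\beta)\, x^\sigma \pmod{J}.$$
On the other hand, $\alpha \in L^*$ forces $x^\alpha \equiv \lambda x^0 \pmod{J}$ for some $\lambda \in \kx$. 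The linear independence of $(x^\sigma + J \mid \sigma \in S)$ then compels $\sigma = 0$, and so $\alpha = \beta \in L$.

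Once $L = L^*$ is in place, the equality $\rho^* = \rho$ is essentially definitional: for each $\alpha \in L$, both $c(\alpha)^{-1}\rho(\alpha)$ and $c(\alpha)^{-1}\rho^*(\alpha)$ are scalars in $\kx$ satisfying $x^\alpha - (\text{scalar}) \in J$, and such a scalar is unique because $J$ is a proper ideal. Thus the only substantive step is showing $L^* \subseteq L$, which rests squarely on the ``$\Gamma/L$-graded'' basis structure of $\Tbq/I(L,\rho)$ supplied by Proposition \ref{Irho.ideals}(c); no further obstacles arise.
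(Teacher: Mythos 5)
Your proof is correct and follows essentially the same route as the paper: the composition $I \circ \ell = \mathrm{id}$ is quoted from Theorem \ref{binom.Tq}, and the containment $L^* \subseteq L$ is forced by the transversal basis of $\Tbq/I(L,\rho)$ from Proposition \ref{Irho.ideals}(c), with $\rho^* = \rho$ following from properness. The only cosmetic difference is that you decompose $\alpha = \sigma + \beta$ over a fixed transversal containing $0$, whereas the paper picks a transversal containing both $\alpha$ and $0$ and contradicts linear independence directly; these are the same argument.
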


\begin{proof}
By Theorem \ref{binom.Tq}, there are maps
$$
I : \calL \longleftrightarrow \{\;\text{proper binomial ideals of}\; \Tbq\;\} : \ell
$$
as described, and $I(\ell(J)) = J$ for all proper binomial ideals $J$ of $\Tbq$. 

It remains to show that $\ell(I(L,\rho)) = (L,\rho)$ for any $(L,\rho) \in \calL$.
Set $(L',\rho') := \ell(I(L,\rho))$, and observe that $L'$ is a sublattice of $\Gamma_Z$ containing $L$. If there exists $\alpha \in L' \setminus L$, there is a transversal for $L$ in $\Gamma$ that contains both $\alpha$ and $0$. But Proposition \ref{Irho.ideals}(c) would then imply that $x^\alpha$ and $1$ are linearly independent modulo $I(L,\rho)$, contradicting the fact that $x^\alpha \in \kx + I(L,\rho)$. Thus $L' = L$. That $\rho' = \rho$ then follows from the properness of $I(L,\rho)$.
Therefore $\ell(I(L,\rho)) = (L,\rho)$, as required.
\end{proof}

\subsection{}
Given an ideal $I$ of a general ring $R$, we use the commutative algebra notation $\sqrt I$ and the terminology \emph{radical of $I$} to stand for the prime radical of $I$, that is, the intersection of all prime ideals of $R$ that contain $I$. (By convention, the intersection of an empty family of ideals equals $R$, so that $\sqrt R = R$.) We also use the corresponding terminology \emph{radical ideal}, in place of ``semiprime ideal", for an ideal which equals its radical.

\begin{proposition}  \label{binomTq.radical}
If $\charr \KK = 0$, then all binomial ideals of $\Tbq$ are radical ideals.
\end{proposition}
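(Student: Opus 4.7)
The plan is to transfer the classical commutative fact---that binomial ideals in $\KK\Gamma_Z$ are radical when $\charr \KK = 0$, the Eisenbud-Sturmfels result recalled in the introduction---through the correspondences of Proposition \ref{idealsTq} and Lemma \ref{binom.biject}.

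First, let $J$ be a binomial ideal of $\Tbq$. The cases $J = 0$ and $J = \Tbq$ are trivial, so assume $J$ is proper. Set $I_Z := J \cap \Zq$ and $I_0 := \phi^{-1}(J)$. By Lemma \ref{binom.biject}, $I_0$ is a binomial ideal of $\KK\Gamma_Z$, and the hypothesis $\charr \KK = 0$ combined with \cite{EiSt} yields $I_0 = \sqrt{I_0}$, i.e., $I_0 = \bigcap_\alpha P^{(0)}_\alpha$ for some family of primes $P^{(0)}_\alpha \in \Spec \KK\Gamma_Z$. Transporting via the isomorphism $\phi$, we obtain $I_Z = \bigcap_\alpha Q_\alpha$ where $Q_\alpha := \phi(P^{(0)}_\alpha) \in \Spec \Zq$.

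The remaining step is to promote this intersection formula from $\Zq$ to $\Tbq$. I would show that
\[
J = I_Z \cdot \Tbq = \bigcap_\alpha \bigl(Q_\alpha \cdot \Tbq\bigr),
\]
which, in view of the $\Spec$-bijection of Proposition \ref{idealsTq}, exhibits $J$ as an intersection of prime ideals of $\Tbq$ and hence shows $J = \sqrt{J}$. The nontrivial containment $\supseteq$ uses the observation (from \S\ref{ZTq.items}) that $\Tbq$ is a free $\Zq$-module with basis $(x^\sigma \mid \sigma \in S)$, for any transversal $S$ of $\Gamma_Z$ in $\Gamma$, whose members commute with every element of $\Zq$. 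Freeness then yields $Q \cdot \Tbq = \bigoplus_{\sigma \in S} Q x^\sigma$ for any ideal $Q \vartriangleleft \Zq$, and intersecting over $\alpha$ respects this decomposition:
\[
\bigcap_\alpha \bigl(Q_\alpha \cdot \Tbq\bigr) = \bigoplus_{\sigma \in S} \Bigl(\bigcap_\alpha Q_\alpha\Bigr) x^\sigma = I_Z \cdot \Tbq.
\]

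The main subtlety lies in this last calculation, which amounts to verifying that extension from $\Zq$ to $\Tbq$ commutes with arbitrary intersections of ideals---not automatic for general noncommutative ring extensions, but a clean consequence here of the free-module description with central-commuting basis. Once established, the result is a direct transfer of the Eisenbud-Sturmfels characteristic-zero radicality statement.
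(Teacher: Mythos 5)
Your proof is correct, but it takes a genuinely different route from the paper's. The paper's argument is a one-liner of another flavor: by Theorem \ref{binom.Tq} and Proposition \ref{Irho.ideals}(c), $\Tbq/J$ is a twisted group algebra of $\Gamma/L$ over $\KK$, and Montgomery--Passman \cite[Theorem 3.2]{MoPa} shows such an algebra is semiprime when $\charr\KK = 0$, so $J$ is radical. You instead descend to the center: $\phi^{-1}(J)$ is a binomial ideal of the commutative Laurent ring $\KK\Gamma_Z$ (Lemma \ref{binom.biject}), it is radical in characteristic $0$, and radicality lifts back through the correspondence of Proposition \ref{idealsTq}; your verification that extension commutes with arbitrary intersections, via the free decomposition $\Tbq = \bigoplus_{\sigma\in S} x^\sigma\,\Zq$ with coefficients in the center, is sound (alternatively, the transfer can be done as in the paper's later proof of Theorem \ref{rad.binom}(b), using $\sqrt{J}\cap\Zq = \sqrt{J\cap\Zq}$). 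The one point to tighten is the commutative input: the radical results of \cite{EiSt} that you invoke are framed there for algebraically closed fields (this is how the present paper cites them, e.g.\ in Theorem \ref{minprimes.binom}), whereas here $\KK$ is an arbitrary field of characteristic $0$. This is easily patched: either extend scalars to $\KKbar$, note the extended ideal is still binomial and hence radical by \cite{EiSt}, and use that $\KK\Gamma_Z/\phi^{-1}(J)$ embeds in the reduced ring $\KKbar\Gamma_Z/\bigl(\phi^{-1}(J)\cdot\KKbar\Gamma_Z\bigr)$; or observe that $\KK\Gamma_Z/\phi^{-1}(J)$ is itself a twisted group algebra of $\Gamma_Z/L$ over a characteristic-zero field and hence semiprime --- which is exactly the center-level version of the paper's crossed-product argument. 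What your route buys is that everything reduces to the commutative case plus the centrally-generated ideal correspondence, avoiding the noncommutative crossed-product literature; what the paper's route buys is brevity and a method that carries over uniformly to characteristic $p$ (Theorem \ref{rad.binom}(b)), where semiprimeness of the relevant twisted group algebra is again the key input, via \cite{Pas2}.
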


\begin{proof} Let $I$ be a binomial ideal of $\Tbq$. Since the result is clear if $I=\Tbq$, we may assume that $I$ is proper. By Theorem \ref{binom.Tq} and Proposition \ref{Irho.ideals}, $\Tbq/I$ is a twisted group algebra of a group $\Gamma/L$ over $\KK$. Theorem 3.2 of \cite{MoPa} shows that $\Tbq/I$ is a semiprime ring, and therefore $I$ is a radical ideal of $\Tbq$.
\end{proof} 

Proposition \ref{binomTq.radical} typically fails in positive characteristic. For instance, if $n=1$ and $\charr \KK = p > 0$, the binomial ideal $\langle x_1^p - 1 \rangle = \langle x_1-1\rangle^p$ in $\Tbq$ is not radical.

\begin{theorem}  \label{maxTq}
{\rm(a)} For any $\rho \in \Hom(\Gamma_Z,\kx)$, the ideal $I(\Gamma_Z,\rho)$ is a maximal ideal of $\Tbq$.

{\rm(b)} If $\KK$ is algebraically closed, then every primitive ideal of $\Tbq$ is a maximal ideal of the form $I(\Gamma_Z,\rho)$ for some $\rho \in \Hom(\Gamma_Z,\kx)$.
\end{theorem}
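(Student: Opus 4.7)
The plan is to translate both parts across the ideal-theoretic bijection of Proposition \ref{idealsTq} and work in the commutative Laurent polynomial ring $Z(\Tbq) \cong \KK\Gamma_Z$. Over the algebraically closed field $\KK$, every maximal ideal of $\KK\Gamma_Z$ has residue field $\KK$ and is parametrized by a character in $\Hom(\Gamma_Z,\kx)$, so both parts reduce to statements about maximal ideals of the center.

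For part (a), I would specialize the proof of Theorem \ref{binom.Tq}(a) to $L = \Gamma_Z$: the set $\{z_\alpha - \rho(\alpha) \mid \alpha \in \Gamma_Z\}$ generates a maximal ideal of $\KK\Gamma_Z = \KK L$ of $\KK$-codimension one, namely the kernel of the $\KK$-algebra homomorphism $\KK\Gamma_Z \to \KK$ extending $\rho$, and this maximal ideal coincides with $I_0(\Gamma_Z,\rho)$. Because the bijection of Proposition \ref{idealsTq} is inclusion-preserving on the full poset of two-sided ideals, it carries maximal ideals to maximal ideals, so $I(\Gamma_Z,\rho) = \phi(I_0(\Gamma_Z,\rho)) \cdot \Tbq$ is maximal in $\Tbq$.

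For part (b), let $P \in \Prim \Tbq$ and put $P_0 := P \cap Z(\Tbq)$, so $P = P_0 \cdot \Tbq$ by Proposition \ref{idealsTq}. The key step is to show that $P_0$ is a maximal ideal of $Z(\Tbq)$. Once that is known, $Z(\Tbq)/P_0 = \KK$ by algebraic closure of $\KK$, and the composition $\KK\Gamma_Z \xrightarrow{\phi} Z(\Tbq) \twoheadrightarrow \KK$ restricted to $\Gamma_Z$ defines a character $\rho \in \Hom(\Gamma_Z,\kx)$ (explicitly, $\rho(\gamma)$ is the scalar image of $c(\gamma)x^\gamma$) with $P_0 = \phi(I_0(\Gamma_Z,\rho))$, whence $P = I(\Gamma_Z,\rho)$, which is maximal by part (a). To prove maximality of $P_0$, I would fix a simple $\Tbq$-module $M$ with $\ann_{\Tbq}(M) = P$ and set $D := \operatorname{End}_{\Tbq}(M)$, a division ring by Schur's lemma. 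The central action of $Z(\Tbq)$ on $M$ yields an embedding $Z(\Tbq)/P_0 \hookrightarrow Z(D)$; if $Z(D) = \KK$, then $Z(\Tbq)/P_0$ is a nonzero $\KK$-subalgebra of $\KK$ and hence equals $\KK$, making $P_0$ a maximal ideal.

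The main obstacle is establishing $Z(D) = \KK$ (equivalently, $D = \KK$). This is the Nullstellensatz for the noetherian affine $\KK$-algebra $\Tbq$: the module $M$ is cyclic over $\Tbq$ and $\dim_\KK \Tbq$ is countable, so $\dim_\KK M$ is at most countable, and when $\KK$ is uncountable Dixmier's version of Schur's lemma gives $D = \KK$ immediately. For general algebraically closed $\KK$, the same conclusion follows from the Nullstellensatz for affine noetherian $\KK$-algebras (see \cite{MR}). This Nullstellensatz input is the only nontrivial ingredient beyond the ideal bijection of Proposition \ref{idealsTq}.
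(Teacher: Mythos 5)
Your part (a) is essentially the paper's own argument: the paper notes via Proposition \ref{Irho.ideals}(b) that $\Zq/I_Z(\Gamma_Z,\rho)$ is one-dimensional, hence $I_Z(\Gamma_Z,\rho)$ is maximal in the center, and transfers maximality through the inclusion-preserving bijections of Proposition \ref{idealsTq}; your codimension-one-kernel description of $I_0(\Gamma_Z,\rho)$ is the same computation (and, as in the paper, needs no algebraic closure). For part (b) you take a genuinely different route. The paper first shows every primitive ideal of $\Tbq$ is maximal by stratification: under the torus action of $H=(\kx)^n$ the zero ideal is the only $H$-prime, so $\Spec\Tbq$ is a single $H$-stratum and \cite[Corollary II.8.5]{BG} forces primitives to be maximal; algebraic closure then identifies $\phi^{-1}(P)$ as a codimension-one maximal ideal $I_0(\Gamma_Z,\rho)$. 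You instead take a simple module with annihilator $P$, apply Schur's lemma, embed $Z(\Tbq)/P_0$ into the center of the endomorphism division ring $D$, and use the Nullstellensatz to get $D=\KK$, so that $P_0$ is maximal and the induced character recovers $\rho$; the final steps $P=P_0\cdot\Tbq=I(\Gamma_Z,\rho)$ via Proposition \ref{idealsTq} and Theorem \ref{binom.Tq}(a) are correct. Your route avoids the stratification machinery and works directly with the center, while the paper's route gets maximality of $P$ itself in one stroke; both ultimately rest on Nullstellensatz-type input, since the Dixmier--Moeglin/stratification results of \cite{BG} that the paper quotes presuppose it. One correction to your justification: there is no general ``Nullstellensatz for affine noetherian $\KK$-algebras''---the endomorphism property is not a formal consequence of these hypotheses over a countable field---so you should instead invoke the fact that $\Tbq$ is an iterated skew-Laurent extension of $\KK$ and therefore satisfies the Nullstellensatz over every base field (see the Ore-extension results in \cite[\S 9.4]{MR}, or the form quoted in \cite[Chapter II.7]{BG}); with that citation, your argument is complete. (Your Dixmier-type argument covers only uncountable $\KK$ on its own.)
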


\begin{proof}
(a) Observe from part (b) of Proposition \ref{Irho.ideals} that $\Zq/I_Z(\Gamma_Z,\rho)$ is $1$-dimensional over $\KK$. Hence, $I_Z(\Gamma_Z,\rho)$ is a maximal ideal of $\Zq$. Since also $I(\Gamma_Z,\rho) \cap \Zq = I_Z(\Gamma_Z,\rho)$, we conclude from Proposition \ref{idealsTq} that $I(\Gamma_Z,\rho)$ is a maximal ideal of $\Tbq$.

(b) Let $P$ be an arbitrary primitive ideal of $\Tbq$. It follows from \cite[Corollary II.8.5]{BG} that $P$ must be a maximal ideal. Specifically, with respect to the standard action of the torus $H = (\kx)^n$ on $\Tbq$, the zero ideal is the unique $H$-prime ideal of $\Tbq$, and so $\Spec \Tbq$ consists of a single $H$-stratum. Thus $P$, being maximal in its $H$-stratum, must be maximal in $\Spec \Tbq$.

Now by Proposition \ref{idealsTq}, $M := \phi^{-1}(P)$ is a maximal ideal of $\KK\Gamma_Z$. Since $\KK$ is algebraically closed, $M$ has codimension $1$ in $\KK\Gamma_Z$, and so $M = I_0(\Gamma_Z,\rho)$ for some $\rho \in \Hom(\Gamma_Z,\kx)$. Thus $P = \phi(M) \cdot \Tbq = I(\Gamma_Z,\rho)$ by Proposition \ref{idealsTq} and Theorem \ref{binom.Tq}(a).
\end{proof}

Recall that an ideal $P$ of a ring $R$ is called \emph{completely prime} if $R/P$ is a domain, i.e., a nonzero ring without nonzero zero-divisors.

\begin{theorem}  \label{prime.binomTq}
Let $L$ be a sublattice of $\Gamma_Z$ and  $\rho \in \Hom(L,\kx)$.

{\rm(a)} If $\Gamma_Z/L$ is torsionfree, then $I(L,\rho)$ is a prime ideal of $\Tbq$. If, further, $\Gamma/L$ is torsionfree, then $I(L,\rho)$ is completely prime.

{\rm(b)} If $\KK$ is algebraically closed, then $I(L,\rho)$ is a prime ideal of $\Tbq$ if and only if $\Gamma_Z/L$ is torsionfree. 

{\rm(c)} If $\KK$ is algebraically closed and $\Gamma/\Gamma_Z$ is torsionfree, then all prime binomial ideals of $\Tbq$ are completely prime.
\end{theorem}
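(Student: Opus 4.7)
The plan is to treat the three parts in sequence, using the twisted group algebra descriptions from Proposition~\ref{Irho.ideals} together with the ideal correspondence from Proposition~\ref{idealsTq} as the central tools.

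For~(a), assume $\Gamma_Z/L$ is torsionfree; being finitely generated, it is then free abelian, say of rank $t$. Lift a $\ZZ$-basis of $\Gamma_Z/L$ to elements $\tilde\delta_1,\dots,\tilde\delta_t \in \Gamma_Z$ and set $y_i := x^{\tilde\delta_i} + I_Z(L,\rho) \in \Zq/I_Z(L,\rho)$. Each $y_i$ is a unit, and the $y_i$ commute (since $\Zq$ is commutative). By Proposition~\ref{Irho.ideals}(b) the classes $x^\tau + I_Z(L,\rho)$, as $\tau$ ranges over a transversal of $L$ in $\Gamma_Z$, form a $\KK$-basis of $\Zq/I_Z(L,\rho)$, and these are nonzero scalar multiples of the monomials $y_1^{m_1}\cdots y_t^{m_t}$. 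Hence $\Zq/I_Z(L,\rho) \cong \KK[y_1^{\pm 1},\dots,y_t^{\pm 1}]$, which is a domain; so $I_Z(L,\rho) \in \Spec \Zq$, and Proposition~\ref{idealsTq} gives $I(L,\rho) \in \Spec \Tbq$. For the second clause, if $\Gamma/L$ is also torsionfree (hence free abelian), Proposition~\ref{Irho.ideals}(c) permits the same argument, now \emph{without} commutativity, to identify $\Tbq/I(L,\rho)$ with a quantum torus, which is again a domain, so $I(L,\rho)$ is completely prime.

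For~(b), one direction is~(a). For the converse, assume $\KK$ is algebraically closed and that $\Gamma_Z/L$ has nontrivial torsion; pick $\gamma \in \Gamma_Z \setminus L$ and an integer $m > 1$ with $m\gamma \in L$. Then $z_\gamma^m = z_{m\gamma}$ in $\KK\Gamma_Z$, so $z_\gamma^m - \rho(m\gamma) \in I_0(L,\rho)$. Factor $t^m - \rho(m\gamma) = \prod_i (t-\omega_i)^{e_i}$ over the algebraically closed field $\KK$. If $I_0(L,\rho)$ were prime, primality would force $z_\gamma - \omega_i \in I_0(L,\rho)$ for some $i$, i.e., $z_\gamma \equiv \omega_i \pmod{I_0(L,\rho)}$. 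However, by Proposition~\ref{Irho.ideals}(b) transferred via $\phi$, the family $(z_\tau + I_0(L,\rho) \mid \tau \in T)$ is a $\KK$-basis of $\KK\Gamma_Z/I_0(L,\rho)$ for any transversal $T$ of $L$ in $\Gamma_Z$; choosing $T$ to contain both $0$ and $\gamma$ shows that $1 + I_0(L,\rho)$ and $z_\gamma + I_0(L,\rho)$ are $\KK$-linearly independent, a contradiction. Hence $I_0(L,\rho)$ is not prime, and neither is $I(L,\rho)$ by Proposition~\ref{idealsTq}.

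For~(c), let $J$ be a prime binomial ideal of $\Tbq$ and write $J = I(L,\rho)$ by Theorem~\ref{binom.Tq}(b). Part~(b) forces $\Gamma_Z/L$ to be torsionfree; combined with the hypothesis that $\Gamma/\Gamma_Z$ is torsionfree, the short exact sequence $0 \to \Gamma_Z/L \to \Gamma/L \to \Gamma/\Gamma_Z \to 0$ has torsionfree outer terms, so a standard diagram-chase (any torsion element of $\Gamma/L$ maps to zero in the torsionfree $\Gamma/\Gamma_Z$, hence lies in the torsionfree $\Gamma_Z/L$, hence vanishes) shows that $\Gamma/L$ is torsionfree. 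The second clause of~(a) now yields that $J$ is completely prime. The main obstacle is the converse direction in~(b): exhibiting, via algebraic closedness, a nontrivial factorization of an element of $I_0(L,\rho)$ whose factors all escape $I_0(L,\rho)$; the linear-independence guarantee from Proposition~\ref{Irho.ideals}(b) is the essential technical input that blocks any would-be congruence $z_\gamma \equiv \omega_i$.
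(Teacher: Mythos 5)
Your proposal is correct, and its skeleton is the same as the paper's: transfer everything to the center via Propositions \ref{idealsTq} and \ref{Irho.ideals}, exploit the twisted group algebra bases, and deduce (c) from (a), (b) and Theorem \ref{binom.Tq} by the torsionfree extension argument. The differences are in how the two key facts are verified, and they are worth recording. In (a), the paper invokes the result that a twisted group algebra of a torsionfree (hence totally orderable) finitely generated abelian group is a domain; you instead lift a basis of $\Gamma_Z/L$ (resp.\ of $\Gamma/L$) and identify $\Zq/I_Z(L,\rho)$ with a Laurent polynomial ring and $\Tbq/I(L,\rho)$ with a quantum torus, whose domain property is already on record. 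This is slightly more self-contained (no appeal to the ordered-group citation), at the price of the small surjectivity/injectivity check for the comparison map, which your basis comparison via Proposition \ref{Irho.ideals} does supply. In (b), the paper produces an explicit product of two nonzero elements equal to zero in the domain $Z(\Tbq)/I_Z(L,\rho)$, whereas you factor $t^m-\rho(m\gamma)$ completely into linear factors over the algebraically closed $\KK$ and use primality of $I_0(L,\rho)$ in the commutative ring $\KK\Gamma_Z$ to force a congruence $z_\gamma\equiv\omega_i$, which the linear independence of $1+I_0(L,\rho)$ and $z_\gamma+I_0(L,\rho)$ (a transversal containing $0$ and $\gamma$) rules out; this variant conveniently avoids having to verify that the cofactor in the paper's factorization is nonzero. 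Both routes rest on the same essential input, the transversal bases of Proposition \ref{Irho.ideals}, so the gain is expository rather than structural.
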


\begin{proof}
(a) By Proposition \ref{Irho.ideals}, $\Zq/I_Z(L,\rho)$ is a twisted group algebra of $\Gamma_Z/L$ over $\KK$. Since $\Gamma_Z/L$ is torsionfree, it is a free abelian group of finite rank, and hence a totally ordered group under a suitable ordering. Consequently, $\Zq/I_Z(L,\rho)$ is a domain (e.g., \cite[Proposition 5.2.6]{NVO}), and $I_Z(L,\rho)$ is a prime ideal of $\Zq$. Since $I(L,\rho) \cap \Zq = I_Z(L,\rho)$, Proposition \ref{idealsTq} thus implies that $I(L,\rho)$ is a prime ideal of $\Tbq$.

Proposition \ref{Irho.ideals} also shows that $\Tbq/I(L,\rho)$ is a twisted group algebra of $\Gamma/L$ over $\KK$. In case $\Gamma/L$ is torsionfree, it is a free abelian group of finite rank, and consequently $\Tbq/I(L,\rho)$ is a domain. Therefore $I(L,\rho)$ is a completely prime ideal of $\Tbq$ in this case.

(b) One direction is given by part (a). Now assume that $I(L,\rho)$ is a prime ideal of $\Tbq$. By Propositions \ref{idealsTq} and \ref{Irho.ideals}, $I_Z(L,\rho)$ is a prime ideal of $Z(\Tbq)$ and $Z(\Tbq)/I_Z(L,\rho)$ is a twisted group algebra of $\Gamma_Z/L$ over $\KK$. In particular, this twisted group algebra is a domain. Since $\KK$ is algebraically closed, $\Gamma_Z/L$ must be torsionfree, as follows.

There is a $\KK$-basis $(b_\beta \mid \beta \in \Gamma_Z/L)$ for $Z(\Tbq)/I_Z(L,\rho)$ such that $b_\beta b_\gamma \in \kx b_{\beta+\gamma}$ for $\beta, \gamma \in \Gamma_Z/L)$. Suppose $\Gamma_Z/L$ contains a nonzero element $\delta$ with finite order $n$. Then $b_\delta^n = \lambda\cdot1$ for some $\lambda \in \kx$. Since $\KK$ is algebraically closed, there exists $\mu \in \kx$ with $\mu^n = \lambda$, and so
$$
(b_\delta - \mu\cdot1) ( b_\delta^{n-1} + \mu b_\delta^{n-2} +\cdots+ \mu^{n-1}\cdot1) = 0.
$$
This contradicts the fact that $Z(\Tbq)/I_Z(L,\rho)$ is a domain. Therefore $\Gamma_Z/L$ is torsionfree, as claimed.

(c) If $\Gamma/\Gamma_Z$ is torsionfree, then $\Gamma/L$ is torsionfree if and only if $\Gamma_Z/L$ is torsionfree. Therefore part (c) follows from (a) and (b), in view of Theorem \ref{binom.Tq}.
\end{proof}

\begin{remark}  \label{remarks.ILrho.pfime}
(1) Theorem \ref{prime.binomTq}(b) does not hold in general when $\KK$ is not algebraically closed. For instance, choose $\KK$ so that $\charr \KK \ne 2$ and $\KK$ contains an element $\alpha$ such that $X^4 - \alpha$ is irreducible in $\KK[X]$. Then consider $\Tbq$ with $n=2$ and $\bfq = \left[ \begin{smallmatrix} 1 &-1\\ -1 &1 \end{smallmatrix} \right]$. As is well known, $Z(\Tbq) = \KK[x_1^{\pm2}, x_2^{\pm2}]$ in this case, and so $\Gamma_Z = (2\ZZ)^2$. Choose $((2,0), (0,2))$ as basis for $\Gamma_Z$, define $\phi : \KK \Gamma_Z \rightarrow Z(\Tbq)$ as in \eqref{def.phi}, and observe that $\phi(z_{(4,0)}) = x_1^4$, whence $c(4,0) = 1$.

Now set $L := \ZZ(4,0) \subseteq \Gamma_Z$, and let $\rho \in \Hom(L,\kx)$ such that $\rho(4,0) = \alpha$. In view of Lemma \ref{ILrho.basis}, the ideal $I(L,\rho)$ of $\Tbq$ is generated by $x_1^4 - \alpha$. The quotient $\Tbq/I(L,\rho)$ is isomorphic to $K[x_2^{\pm1};\theta]$ where $K$ is the field $\KK[X]/(X^4-\alpha)$ and $\theta$ is the $\KK$-automorphism of $K$ sending the coset of $X$ to its negative. Thus, $\Tbq/I(L,\rho)$ is a domain and $I(L,\rho)$ is a prime ideal of $\Tbq$. However, the quotient $\Gamma_Z/L$ is not torsionfree.

(2) The hypothesis of torsionfreeness of $\Gamma/\Gamma_Z$ in Theorem \ref{prime.binomTq}(c) holds in particular in case the subgroup $\langle q_{ij} \rangle$ of $\kx$ is torsionfree. To see this, suppose $\alpha \in \Gamma$ and $t \in \Zpos$ with $t \alpha \in \Gamma_Z$. Then $x^{t\alpha} \in Z(\Tbq)$, and the relations $x_i x^{t \alpha} = x^{t \alpha} x_i$ imply that $\prod_{j=1}^n q_{ij}^{t \alpha_j} = 1$ for $i \in [1,n]$. If $\langle q_{ij} \rangle$ is torsionfree, it follows that $\prod_{j=1}^n q_{ij}^{\alpha_j} = 1$ and $x_i x^{\alpha} = x^{\alpha} x_i$ for $i \in [1,n]$, whence $x^\alpha \in Z(\Tbq)$ and $\alpha \in \Gamma_Z$.

Thus, if $\KK$ is algebraically closed and  $\langle q_{ij} \rangle$ is torsionfree, all prime binomial ideals of $\Tbq$ are completely prime. This, however, is covered by a more general theorem: Assuming only that $\langle q_{ij} \rangle$ is torsionfree,  all prime ideals of $\Abq$ are completely prime by \cite[Theorem 2.1]{GLet94}, whence all prime ideals of $\Tbq$ are completely prime.
\end{remark}

\begin{theorem}  \label{rad.binom}
Let $L$ be a sublattice of $\Gamma_Z$ and $\rho \in \Hom(L,\kx)$.

{\rm(a)} If $\charr\KK = 0$, then $\sqrt{I(L,\rho)} = I(L,\rho)$.

{\rm(b)} Assume that $\KK$ is perfect of characteristic $p > 0$.  Let $L'_p/L$ be the $p$-torsion subgroup of $\Gamma_Z/L$, that is,
$$
L'_p := \{ \alpha \in \Gamma_Z \mid p^l\alpha \in L\ \text{for some}\ l \in \Znn \}.
$$
There is a unique map $\rho'$ in $\Hom(L'_p, \kx)$ extending $\rho$, and $\sqrt{I(L,\rho)} = I(L'_p, \rho')$.
\end{theorem}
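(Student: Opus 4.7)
My plan is to reduce to the commutative Laurent polynomial ring $\KK\Gamma_Z$ via the ideal correspondence of Proposition \ref{idealsTq} and invoke the classical Eisenbud-Sturmfels result quoted in the Introduction.

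Part (a) is immediate from Proposition \ref{binomTq.radical}. For part (b), I would first observe that the radical operation is compatible with the bijections \eqref{ideals.biject}. Indeed, since these bijections are inclusion-preserving and restrict to bijections on prime spectra \eqref{primes.biject}, and the prime radical of an ideal is the intersection of the primes above it, I get
\begin{equation*}
\sqrt{\,\phi(I_0)\cdot \Tbq\,} \;=\; \phi\bigl(\sqrt{I_0}\bigr)\cdot \Tbq
\end{equation*}
for any ideal $I_0$ of $\KK\Gamma_Z$. Applied to $I_0 = I_0(L,\rho)$ and combined with Theorem \ref{binom.Tq}(a), it therefore suffices to prove $\sqrt{I_0(L,\rho)} = I_0(L'_p,\rho')$ in the commutative Laurent polynomial ring $\KK\Gamma_Z$, together with the assertions about $\rho'$.

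The commutative identity $\sqrt{I_0(L,\rho)} = I_0(L'_p,\rho')$ is exactly the Eisenbud-Sturmfels theorem cited in the Introduction (\cite[Corollary 2.2]{EiSt}, applied to $\KK\Gamma_Z$ viewed as a Laurent polynomial ring in the basis variables $z_1,\dots,z_r$ of \S\ref{ZTq.items}). The existence and uniqueness of $\rho'$ is verified as follows. For $\alpha \in L'_p$, pick $l \in \Znn$ with $p^l\alpha \in L$; any homomorphism $\rho'$ extending $\rho$ must satisfy $\rho'(\alpha)^{p^l} = \rho(p^l\alpha)$. Because $\KK$ is perfect of characteristic $p$, the Frobenius $\lambda \mapsto \lambda^p$ is a bijection on $\KK$ and hence on $\kx$, so the equation $\mu^{p^l} = \rho(p^l\alpha)$ has a unique solution $\mu \in \kx$; this forces $\rho'(\alpha)$ to be that $\mu$. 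Independence from the choice of $l$ follows because raising both sides to higher powers of $p$ is injective, and the homomorphism property of $\rho'$ is inherited from that of $\rho$ combined with the uniqueness of $p$-power roots.

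The main technical point, and the only nonroutine step, is justifying that radicals are preserved under the bijection $I_0 \leftrightarrow \phi(I_0)\cdot \Tbq$; I would handle it cleanly by writing $\sqrt{I_0(L,\rho)}$ as $\bigcap_{P_0 \supseteq I_0(L,\rho)} P_0$ and invoking Proposition \ref{idealsTq}, which sends each prime $P_0$ containing $I_0(L,\rho)$ bijectively to a prime $\phi(P_0)\cdot \Tbq$ containing $\phi(I_0(L,\rho))\cdot\Tbq = I(L,\rho)$, and conversely. Everything else is either a direct application of Theorem \ref{binom.Tq}(a) (to identify $I_0(L'_p,\rho')$ with $I(L'_p,\rho')$ on the quantum torus side) or the elementary perfect-field arithmetic sketched above.
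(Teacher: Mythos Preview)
Your reduction strategy---passing to the commutative Laurent polynomial ring $\KK\Gamma_Z$ via the radical-preserving bijection of Proposition~\ref{idealsTq}---is sound, and the argument that radicals are preserved under \eqref{ideals.biject} is correct. Part~(a) and the existence/uniqueness of $\rho'$ are handled just as in the paper. However, the citation you lean on for the commutative identity $\sqrt{I_0(L,\rho)} = I_0(L'_p,\rho')$ does not deliver what you need: \cite[Corollary~2.2]{EiSt} describes the associated primes of $I_0(L,\rho)$ over an \emph{algebraically closed} field, and the passage in the Introduction you invoke is likewise stated under that hypothesis. Theorem~\ref{rad.binom}(b) assumes only that $\KK$ is perfect, and you have supplied no independent argument for the commutative radical identity at that generality. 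As written, the proof has a gap at precisely the step you label ``the only nonroutine step'' as already handled.

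The paper closes this gap without citing \cite{EiSt}. It argues the two inclusions directly: the Frobenius identity $(z_\alpha - \rho'(\alpha))^{p^l} = z_{p^l\alpha} - \rho(p^l\alpha)$ in characteristic $p$ gives $I(L'_p,\rho') \subseteq \sqrt{I(L,\rho)}$, and the reverse containment follows once one knows $I(L'_p,\rho')$ is itself radical. For the latter, the paper observes via Proposition~\ref{Irho.ideals}(b) that $\Zq/I_Z(L'_p,\rho')$ is a twisted group algebra of $\Gamma_Z/L'_p$, a group with no $p$-torsion, and invokes Passman's semiprimeness criterion \cite[Theorem~I]{Pas2}; the ideal correspondence then transfers radicality to $\Tbq$. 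If you insert this direct verification (or an equivalent one) in place of the \cite{EiSt} citation, your proof becomes complete and is then essentially the same as the paper's, just organized slightly differently.
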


\begin{proof}
(a) Proposition \ref{binomTq.radical}.

(b) The existence and uniqueness of $\rho'$ follow from the fact that for any $l\ge0$, every element of $\kx$ has a unique $p^l$-th root in $\kx$. Note that $I(L,\rho) \subseteq I(L'_p,\rho')$.

Given any $\alpha \in L'_p$, there is some $l \ge 0$ such that $p^l \alpha \in L$, whence
$$(z_\alpha - \rho'(\alpha))^{p^l} = z_{p^l \alpha} - \rho(p^l \alpha) \in I_0(L,\rho).$$
Consequently,
$$(c(\alpha) x^\alpha - \rho'(\alpha))^{p^l} = \phi\bigl( (z_\alpha - \rho'(\alpha))^{p^l} \bigr) \in \phi(I_0(L,\rho)) \subseteq I(L,\rho),$$
and so $c(\alpha) x^\alpha - \rho'(\alpha) \in \sqrt{I(L,\rho)}$, due to $c(\alpha) x^\alpha - \rho'(\alpha)$ being central. Thus, $I(L'_p,\rho') \subseteq \sqrt{I(L,\rho)}$.

It remains to show that $I(L'_p,\rho')$ is a radical ideal. By construction, $\Gamma_Z/L'_p$ has no $p$-torsion, so the order of any finite subgroup of $\Gamma_Z/L'_p$ is nonzero in $\KK$. Since $\Zq/I_Z(L'_p,\rho')$ is a twisted group algebra of $\Gamma_Z/L'_p$ over $\KK$ by Proposition \ref{Irho.ideals}(b), \cite[Theorem I]{Pas2} shows that $\Zq/I_Z(L'_p,\rho')$ is a semiprime ring. Thus $I_Z(L'_p,\rho')$ is a radical ideal of $\Zq$.

Since $\sqrt{I(L'_p,\rho')}$ is the intersection of all prime ideals of $\Tbq$ containing $I(L'_p,\rho')$, Proposition \ref{idealsTq} implies that
$$\sqrt{I(L'_p,\rho')} \cap \Zq = \sqrt{I(L'_p,\rho') \cap \Zq} = \sqrt{I_Z(L'_p,\rho')} = I_Z(L'_p,\rho') = I(L'_p,\rho') \cap \Zq,$$
and therefore $\sqrt{I(L'_p,\rho')} = I(L'_p,\rho')$ as required.
\end{proof}

\begin{example}  \label{rad.nonbinom.Tq}
Perfectness of $\KK$ is needed in Theorem \ref{rad.binom} to obtain the map $\rho'$. If $\KK$ is not perfect, $\sqrt{I(L,\rho)}$ need not be a binomial ideal, as a slight modification of \cite[Lemma 29]{BGN} shows. Namely, let $p = \charr \KK >0$, take $n=2$ and $\bfq = \left[ \begin{smallmatrix} 1&1\\ 1&1 \end{smallmatrix} \right]$, and choose $a \in \KK \setminus \KK^p$. The polynomial $x_1^p - a \in \KK[x_1]$ is irreducible (e.g., \cite[Theorem 5.9.6]{Coh}), and so $\KK[x_1]/ \langle x_1^p - a \rangle$ is a field.

Take
$$I := \langle x_1^p - a,\, x_2^p - (a+1) \rangle = I(L,\rho)$$
where $L := (p\ZZ)^2$ while $\rho$ sends $(p,0) \mapsto a$ and $(0,p) \mapsto a+1$ (recall Lemma \ref{ILrho.basis}). Since
$$(x_1-x_2+1)^p = (x_1^p - a) - (x_2^p - a - 1) \in I$$
and $\Tbq/\langle x_1^p - a,\, x_1-x_2+1 \rangle \cong \KK[x_1]/\langle x_1^p - a \rangle$, we find that
$$\sqrt{I} = \langle x_1^p - a,\, x_1-x_2+1 \rangle$$
and $\dim \Tbq/\sqrt{I} = p$.

If $\sqrt{I} = I(L',\rho')$ for some sublattice $L'$ of $\Gamma$ and $\rho' \in \Hom(L',\kx)$, then Proposition \ref{Irho.ideals} shows that $\Tbq/\sqrt{I}$ is a twisted group algebra of $\Gamma/L'$ over $\KK$, with a $\KK$-basis of the form $(x^\sigma + \sqrt{I} \mid \sigma \in S )$ where $S$ is a transversal for $L'$ in $\Gamma$. Consequently, $\Tbq/\sqrt{I}$ is graded by $\Gamma/L'$, with $1$-dimensional homogeneous components $(\Tbq/\sqrt{I})_t = \KK (x^\sigma + \sqrt{I})$ for $t = \sigma+L' \in \Gamma/L'$.

The elements $y_i := x_i + \sqrt{I}$ of $\Tbq/\sqrt{I}$ are homogeneous. But $y_1 + \overline{1} = y_2$, so $y_1$ and $\overline 1$ must lie in the same homogeneous component, whence $y_1 \in \KK \overline 1$. Then $\dim \Tbq/\sqrt{I} = 1$, a contradiction.

Therefore $\sqrt{I}$ is not a binomial ideal of $\Tbq$.

In this example, $\sqrt{I}$ is a prime (actually, maximal) ideal of $\Tbq$ and hence is the unique prime ideal minimal over $I$. The example thus also shows that prime ideals minimal over a binomial ideal need not be binomial. Therefore the following theorem does not hold over arbitrary base fields.
\end{example}

\begin{theorem}  \label{minprimes.binom}
Assume that $\KK$ is algebraically closed. Let $L$ be a sublattice of $\Gamma_Z$ and $\rho \in \Hom(L,\kx)$. Let $L'/L$ be the torsion subgroup of $\Gamma_Z/L$, and let $\rho'_1,\dots,\rho'_m \in \Hom(L',\kx)$ be the distinct extensions of $\rho$ to $L'$.

The prime ideals of $\Tbq$ minimal over $I(L,\rho)$ are the same as the associated primes of $\Tbq/I(L,\rho)$, and they are the $I(L', \rho'_j)$ for $j \in [1,m]$.  All of these prime ideals have the same height, equal to $\rank L$.
\end{theorem}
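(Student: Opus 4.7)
The plan is to transfer the entire statement to the center $Z(\Tbq)$ via the isomorphism $\phi$ of Proposition~\ref{idealsTq}, reducing the problem to a statement about binomial ideals in the Laurent polynomial ring $\KK\Gamma_Z$, where the classical Eisenbud-Sturmfels theory applies.

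First I would use Corollary~\ref{hts.Tq}(a) to match the prime ideals of $\Tbq$ minimal over $I(L,\rho)$ bijectively with the prime ideals of $\KK\Gamma_Z$ minimal over $\phi^{-1}(I(L,\rho)) = I_0(L,\rho)$, and note by Corollary~\ref{hts.Tq}(b) that this bijection preserves heights. By Corollary~\ref{ass.Tq}, the same bijection identifies $\ass({}_{\Tbq}(\Tbq/I(L,\rho)))$ with $\ass({}_{\KK\Gamma_Z}(\KK\Gamma_Z/I_0(L,\rho)))$.

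Next I would invoke the Eisenbud-Sturmfels theorem in the Laurent polynomial ring $\KK\Gamma_Z$: over the algebraically closed field $\KK$, the minimal primes over $I_0(L,\rho)$ are precisely the ideals $I_0(L',\rho'_j)$ for $j \in [1,m]$, each of height $\rank L$ (this is \cite[Corollary 2.5]{EiSt}, with the height supplied by \cite[Theorem 2.1(b)]{EiSt}). By Theorem~\ref{binom.Tq}(a), each $I_0(L',\rho'_j)$ corresponds under the bijection to $I(L',\rho'_j)$ in $\Tbq$, which establishes the asserted form of the minimal primes and their common height equal to $\rank L$.

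For the identification of associated primes with minimal primes, the plan is to verify in the commutative setting that $\KK\Gamma_Z/I_0(L,\rho)$ has no embedded associated primes, and then transfer back. By Lemma~\ref{ILrho.basis} (specifically the regular-sequence fact from its proof), $I_0(L,\rho)$ is generated by the regular sequence $z_{\beta_1} - \rho(\beta_1), \ldots, z_{\beta_r} - \rho(\beta_r)$ of length $r = \rank L = \hgt I_0(L,\rho)$ in the Cohen-Macaulay ring $\KK\Gamma_Z$. The standard unmixedness theorem for complete intersections in Cohen-Macaulay rings then implies that $\KK\Gamma_Z/I_0(L,\rho)$ is Cohen-Macaulay with only minimal associated primes. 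Transferring back via Corollary~\ref{ass.Tq} yields the equality $\ass({}_{\Tbq}(\Tbq/I(L,\rho))) = \{I(L',\rho'_j) \mid j \in [1,m]\}$, completing the argument.

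The hard part will be the associated-primes step, since it invokes Cohen-Macaulay unmixedness. An alternative route, if one wishes to avoid commutative Cohen-Macaulay machinery, is to combine the binomial primary decomposition from \cite[Corollary 2.5]{EiSt} with Theorem~\ref{prime.binomTq}(a) applied inside $\KK\Gamma_Z$ (which shows each $I_0(L',\rho'_j)$ is prime, since $\Gamma_Z/L'$ is torsionfree): one then reads off directly from an irredundant binomial primary decomposition that the associated primes are exactly the $I_0(L',\rho'_j)$.
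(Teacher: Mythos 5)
Your proposal is correct, and the transfer mechanism is exactly the paper's: reduce to the center via Proposition~\ref{idealsTq}, Corollary~\ref{hts.Tq} and Corollary~\ref{ass.Tq}, identify $\phi^{-1}(I(L,\rho)) = I_0(L,\rho)$ and $\phi(I_0(L',\rho'_j))\cdot\Tbq = I(L',\rho'_j)$ via Theorem~\ref{binom.Tq}(a), and quote Eisenbud--Sturmfels for the commutative statement. The one place you diverge is in how the commutative input is sourced. The paper cites \cite[Corollary 2.2]{EiSt}, which already asserts, over an algebraically closed field, that the associated primes of $\KK\Gamma_Z/I_0(L,\rho)$ are exactly the $I_0(L',\rho'_j)$, that these are all minimal over $I_0(L,\rho)$, and that they all have height $\rank L$ (with the count $m$ handled in characteristic $p$ via the sublattice $\Sat'_p(L)$, through which every extension of $\rho$ to $L'$ factors). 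You instead cite Eisenbud--Sturmfels only for the identification of the minimal primes and re-derive the absence of embedded primes from the fact that $I_0(L,\rho)$ is a complete intersection (Lemma~\ref{ILrho.basis} / \cite[Theorem 2.1(b)]{EiSt}) in the Cohen--Macaulay ring $\KK\Gamma_Z$, invoking unmixedness; this is sound and also recovers the common height $\rank L$, so it buys a self-contained justification of the equality of associated and minimal primes at the cost of importing Cohen--Macaulay machinery that the paper avoids by quoting the stronger corollary directly. Your alternative route via a binomial primary decomposition together with the primality of the $I_0(L',\rho'_j)$ (torsionfreeness of $\Gamma_Z/L'$, as in Theorem~\ref{prime.binomTq}(a) read in the commutative case) is likewise fine. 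Only a citation caveat: make sure the numbered result you quote from \cite{EiSt} actually covers the minimal-prime identification you need; the statement the paper relies on is Corollary 2.2 there, not Corollary 2.5.
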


\begin{proof} The commutative version of this result was proved in \cite{EiSt}. Following that paper but replacing $\ZZ^n$ by $\Gamma_Z$, let $p := \charr \KK \ge 0$, set $\Sat'_p(L) := L'$ in case $p = 0$, while if $p > 0$, let $\Sat'_p(L)$ be the largest sublattice of $L'$ such that $\Sat'_p(L) \supseteq L$ and $p \nmid |\Sat'_p(L)/L|$.

In terms of our present notation, \cite[Corollary 2.2]{EiSt} says the following: There are $m := |\Sat'_p(L)/L|$ distinct maps $\rho_1,\dots,\rho_m$ in $\Hom(\Sat'_p(L), \kx)$ extending $\rho$, and for $j \in [ 1,m ]$ there is a unique map $\rho'_j$ in $\Hom(L', \kx)$ extending $\rho_j$. The associated primes of $\KK\Gamma_Z/I_0(L,\rho)$ are $I_0(L', \rho'_j)$ for $j \in [ 1,m ]$, and these primes are all minimal over $I_0(L,\rho)$. Consequently, the $I_0(L', \rho'_j)$ for $j \in [ 1,m ]$ are exactly the prime ideals of $\KK\Gamma_Z$ minimal over $I_0(L,\rho)$. All of these prime ideals have the same height, equal to $\rank L$.

The theorem now follows from Proposition \ref{idealsTq}, Corollaries \ref{hts.Tq}, \ref{ass.Tq}, and Theorem \ref{binom.Tq}.
\end{proof}

\sectionnew{Binomial ideals in $\Abq$ and graded quotients}  \label{binomAq.1}
We initiate the study of binomials ideals in the quantum affine space $\Abq$ by developing gradings on quotients by such ideals. Quotients of quantum affine spaces by binomial ideals are characterized in terms of the existence of suitable gradings.  We also characterize twisted semigroup algebras of finitely generated commutative monoids and quantum affine toric varieties as quotients of quantum affine spaces modulo suitable binomial ideals.

\subsection{}
A \emph{binomial in $\Abq$} is any binomial of $\Tbq$ which lies in $\Abq$, i.e., any nonzero element $\lambda x^\alpha + \mu x^\beta$ where $\lambda, \mu \in \KK$ and $\alpha, \beta \in \Gamma^+$, and a \emph{binomial ideal of $\Abq$} is any ideal of $\Abq$ generated by binomials in $\Abq$. Similarly, a \emph{monomial in $\Abq$} is any nonzero element $\lambda x^\alpha$ where $\lambda \in \kx$ and $\alpha \in \Gamma^+$, and a \emph{monomial ideal of $\Abq$} is any ideal of $\Abq$ generated by monomials in $\Abq$.  Also, extending the usage in \cite{EiSt}, we define a (not necessarily commutative) \emph{binomial algebra} to be any algebra isomorphic to a quotient of a quantum affine space modulo a binomial ideal.

\begin{remark}
Although the algebra $\Tbq$ is best studied directly, one should note that it is also a binomial algebra.  Namely, given $\bfq$, define $\bfqhat = (\qhat_{ij}) \in M_{2n}(\kx)$ so that
\begin{equation*}
\begin{aligned}
\qhat_{ij} &:= q_{ij} &&(i,j \le n)  &\qquad\qhat_{ij} &:= q_{i-n,j-n} &&(i,j > n)  \\
\qhat_{ij} &:= q_{j-n,i} &&(i \le n < j) &\qhat_{ij} &:= q_{j,i-n} &&(i > n \ge j).
\end{aligned}
\end{equation*}
Then $\Tbq \cong \calO_{\bfqhat}(\KK^{2n}) / \langle x_i x_{i+n} - 1 \mid i \in [1,n] \rangle$.  Consequently, binomial ideals of $\Tbq$ correspond to quotients $I/ \langle x_i x_{i+n} - 1 \mid i \in [1,n] \rangle$ for certain binomial ideals $I$ of $\calO_{\bfqhat}(\KK^{2n})$.

Partial localizations of $\Abq$ obtained by inverting some of the $x_i$ may be described similarly.  For some other binomial algebras, see Theorems \ref{form.Aq.mod.binom}, \ref{qafftoric.as.Aq/B} and Remark \ref{more.binom.algs}.
\end{remark}

\subsection{}
Let $\le$ denote the lexicographic order on $\Gamma$, so that $(\Gamma,\le)$ is a totally ordered abelian group. Define \emph{degrees}, \emph{leading terms}, and \emph{leading coefficients} for nonzero elements of $\Tbq$ and $\Abq$ with respect to this ordering on exponents. Observe that $(\Gamma^+,\le)$ is a well-ordered set with least element $0$.

\begin{lemma}  \label{basis.binom.Aq}
Let $B$ be a binomial ideal of $\Abq$, and set
\begin{align*}
A_1 &:= \{ \alpha \in \Gamma^+ \mid x^\alpha \in B \}  \\
A_2 &:= \{ \alpha \in \Gamma^+ \setminus A_1 \mid x^\alpha - \mu x^{\alpha'} \in B\ \text{for some}\ \mu \in \kx,\; \alpha' \in \Gamma^+,\; \alpha' < \alpha \}.
\end{align*}
For $\alpha \in A_2$, let $p(\alpha)$ be the least element of $\Gamma^+$ such that $p(\alpha) < \alpha$ and $x^\alpha - \nu_\alpha x^{p(\alpha)} \in B$ for some $\nu_\alpha \in \kx$. Then $\nu_\alpha$ is uniquely determined by $\alpha$. 

Set $\calB_1 := (x^\alpha \mid \alpha \in A_1)$ and $\calB_2 := (x^\alpha - \nu_\alpha x^{p(\alpha)} \mid \alpha \in A_2)$. Then $\calB_1 \sqcup \calB_2$ is a $\KK$-basis for $B$.
\end{lemma}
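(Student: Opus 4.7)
The plan is to establish three things in turn: uniqueness of $\nu_\alpha$ for $\alpha \in A_2$, linear independence of $\calB_1 \sqcup \calB_2$, and that $\calB_1 \sqcup \calB_2$ spans $B$. The existence of $p(\alpha)$ is automatic from the well-ordering of $(\Gamma^+, \le)$. Uniqueness of $\nu_\alpha$ is immediate: if both $x^\alpha - \nu x^{p(\alpha)}$ and $x^\alpha - \nu' x^{p(\alpha)}$ lie in $B$ with $\nu \ne \nu'$, subtracting yields $x^{p(\alpha)} \in B$, hence $x^\alpha \in B$, contradicting $\alpha \notin A_1$.

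For linear independence, I suppose there is a finite relation
$$\sum_{\alpha \in A_1'} \lambda_\alpha x^\alpha + \sum_{\alpha \in A_2'} \mu_\alpha \bigl( x^\alpha - \nu_\alpha x^{p(\alpha)} \bigr) = 0$$
with $A_1' \subseteq A_1$, $A_2' \subseteq A_2$ and all displayed coefficients nonzero, and examine the largest exponent $\beta$ appearing in $A_1' \cup A_2'$. Since $p(\alpha) < \alpha \le \beta$ for every $\alpha \in A_2'$, the exponent $\beta$ is never reached by any $p(\alpha)$-term, so the coefficient of $x^\beta$ on the left is the single scalar $\lambda_\beta$ or $\mu_\beta$ (using $A_1 \cap A_2 = \emptyset$), which is forced to vanish, a contradiction.

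For spanning, let $V := \Span(\calB_1 \cup \calB_2) \subseteq B$. The plan is to first reduce to showing that every binomial lying in $B$ belongs to $V$, and then to prove the binomial case by transfinite induction on the leading exponent (well-ordering of $\Gamma^+$). For the reduction, since $B$ is generated by binomials and $\Abq$ satisfies the $q$-commutation \eqref{d.prod}, for any monomials $x^\gamma, x^\delta$ and binomial generator $b_s = \mu_1 x^{\alpha_1} + \mu_2 x^{\alpha_2}$, the product $x^\gamma b_s x^\delta$ collapses to a $\KK$-linear combination of at most two monomials, i.e., a (possibly zero) binomial in $B$. Expanding arbitrary $a, a' \in \Abq$ as $\KK$-combinations of monomials, each $a b_s a'$ is a $\KK$-combination of binomials of $B$, and hence so is every element of $B$. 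For the induction, monomials $\lambda x^\alpha \in B$ place $\alpha$ in $A_1$. For a genuine binomial $b = \lambda x^\alpha + \mu x^\beta \in B$ with $\alpha > \beta$ and $\lambda,\mu \in \kx$, the definition of $A_2$ forces $\alpha \in A_1 \cup A_2$. If $\alpha \in A_1$, then $x^\alpha \in B$, hence $x^\beta \in B$, so both monomials lie in $\calB_1$. If $\alpha \in A_2$, I subtract off $\lambda\bigl( x^\alpha - \nu_\alpha x^{p(\alpha)} \bigr) \in \calB_2$ to obtain either zero or a nonzero binomial in $B$ whose leading exponent is $\max(\beta, p(\alpha)) < \alpha$, handled by induction.

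The main obstacle is the reduction to binomials. A direct induction on the leading exponent of an arbitrary $b \in B$ would break down, because a three-or-more-term element of $B$ need not have its leading exponent in $A_1 \cup A_2$. The decisive point is that the skew-commutativity of $\Abq$ prevents term-counts from growing when a binomial is multiplied by monomials on either side, so that the ideal generated by binomials genuinely consists of $\KK$-linear combinations of binomials in $B$. Once this is in hand, the two inductions above finish the proof cleanly.
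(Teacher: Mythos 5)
Your proposal is correct and follows essentially the same route as the paper's proof: uniqueness of $\nu_\alpha$ by subtraction, linear independence by comparing coefficients at the largest exponent, and spanning by first observing that skew-commutativity collapses $x^\gamma b_s x^\delta$ to a binomial (so $B$ is spanned by binomials of $B$) and then inducting on the leading exponent using subtraction of elements of $\calB_2$. The only differences are cosmetic reorganizations (e.g.\ treating the minimal element of $A_2$ uniformly within the transfinite induction rather than as a separate base case, and explicitly handling the case $\alpha \in A_1$).
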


\begin{proof}
If $B=0$, then $\calB_1$ and $\calB_2$ are empty and the result is clear. Hence, we may assume that $B \ne 0$. For $\alpha \in A_2$, note that since $x^\alpha - \nu_\alpha x^{p(\alpha)} \in B$ while $x^\alpha \notin B$, we must have $x^{p(\alpha)} \notin B$ and $p(\alpha) \notin A_1$. It follows that $\nu_\alpha$ is uniquely determined by $\alpha$.

If $\calB_1 \sqcup \calB_2$ fails to be linearly independent, there is a nontrivial relation
\begin{equation}  \label{lin.dep.B1B2}
\sum_{i=1}^m \lambda_i x^{\alpha_i} + \sum_{j=1}^r \mu_j \bigl( x^{\gamma_j} - \nu_{\gamma_j} x^{p(\gamma_j)} \bigr) = 0
\end{equation}
for some distinct $\alpha_1,\dots,\alpha_m \in A_1$, some distinct $\gamma_1,\dots,\gamma_r \in A_2$, and some $\lambda_i, \mu_j \in \kx$, where $m+r>0$. Since $\gamma_j, p(\gamma_j) \notin A_1$ for all $j$, there cannot be any $\lambda_i x^{\alpha_i}$ terms in \eqref{lin.dep.B1B2}, i.e., $m=0$ and
\begin{equation}  \label{lin.dep.B2}
\sum_{j=1}^r \mu_j \bigl( x^{\gamma_j} - \nu_{\gamma_j} x^{p(\gamma_j)} \bigr) = 0.
\end{equation}
Without loss of generality, $\gamma_1 < \cdots < \gamma_r$, so also $p(\gamma_j) < \gamma_j \le \gamma_r$ for all $j$. But then the left hand side of \eqref{lin.dep.B2} is nonzero with leading term $\mu_r x^{\gamma_r}$, which is impossible. Therefore $\calB_1 \sqcup \calB_2$ is $\KK$-linearly independent.

By hypothesis, $B$ is generated by certain binomials $\lambda_i x^{\alpha_i} + \mu_i x^{\beta_i}$, whence elements of $B$ are linear combinations of products of the form $x^\gamma ( \lambda_i x^{\alpha_i} + \mu_i x^{\beta_i} ) x^\delta$ with $\gamma,\delta \in \Gamma^+$. Any such product is itself a binomial, and so $B$ is spanned by binomials. Thus, to prove that $\calB_1 \sqcup \calB_2$ spans $B$, it suffices to show that every binomial $\lambda x^\alpha + \mu x^\beta$ in $B$ is in the $\KK$-span of $\calB_1 \sqcup \calB_2$.

If $\mu = 0$, then $\lambda \ne 0$ and $x^\alpha \in B$, whence $x^\alpha \in \calB_1$ and $\lambda x^\alpha + \mu x^\beta \in \Span(\calB_1)$. The same conclusion holds if $\lambda = 0$, or if $\alpha = \beta$. Consequently, we may assume that $\lambda,\mu \ne 0$ and $\alpha \ne \beta$. After switching terms if necessary, we may also assume that $\alpha > \beta$. We now proceed by induction on $\alpha$.

Since $x^\alpha + \lambda^{-1} \mu x^\beta \in B$, we must have $\alpha \in A_2$. Hence, the first step of the induction is the case when $\alpha = \min(A_2)$. Now $B$ contains the element
\begin{equation}  \label{B.elt}
( \lambda x^\alpha + \mu x^\beta ) - \lambda ( x^\alpha - \nu_\alpha x^{p(\alpha)} ) = \mu x^\beta  + \lambda \nu_\alpha x^{p(\alpha)}.
\end{equation}
If $p(\alpha) \ne \beta$, we find that $\max(p(\alpha), \beta) \in A_2$, which is impossible since $p(\alpha)$ and $\beta$ are both $< \alpha$. Hence, $p(\alpha) = \beta$ and the right hand side of \eqref{B.elt} is zero or a monomial. It follows that this right hand term lies in $\Span(\calB_1)$, whence $\lambda x^\alpha + \mu x^\beta \in \Span(\calB_1 \sqcup \calB_2)$.

Finally, suppose that $\alpha > \min(A_2)$. As above, $B$ contains the element \eqref{B.elt}, and if $p(\alpha) = \beta$, then $\lambda x^\alpha + \mu x^\beta \in \Span(\calB_1 \sqcup \calB_2)$. If $p(\alpha) \ne \beta$, then either $p(\alpha) < \beta < \alpha$ or $\beta < p(\alpha) < \alpha$. In either case, our induction hypothesis implies that $\mu x^\beta  + \lambda \nu_\alpha x^{p(\alpha)} \in \Span(\calB_1 \sqcup \calB_2)$, whence also $\lambda x^\alpha + \mu x^\beta \in \Span(\calB_1 \sqcup \calB_2)$. This completes the induction and concludes the proof that $\calB_1 \sqcup \calB_2$ spans $B$.
\end{proof} 

Analogous to the case of twisted group algebras, a $\KK$-algebra $R$ is a twisted semigroup algebra of an additive monoid $S$ over $\KK$, denoted $\KK^eS$, if and only if $R$ has a $\KK$-basis $(z_s \mid s \in S)$ such that $z_s z_t \in \kx z_{s+t}$ for all $s,t \in S$. (One has only to check that the map $e : S \times S \rightarrow \kx$ such that $z_s z_t = e(s,t) z_{s+t}$ is a $2$-cocycle on $S$.)

\begin{lemma}  \label{Aq.mod.binom}
Let $B$ be a binomial ideal of $\Abq$, and set $y^\alpha := x^\alpha + B \in \Abq/B$ for $\alpha \in \Gamma^+$. There is a monoid congruence $\sim$ on $\Gamma^+$ defined by the rule $\alpha \sim \beta \iff \KK y^\alpha = \KK y^\beta$. Let $S := \Gamma^+/{\sim}$, and for $s\in S$ set $R_s := \KK y^\alpha$ where $\alpha$ is some {\rm(}any{\rm)} element of $s$. Then
$$\Abq/B = \bigoplus_{s\in S} R_s$$
is a grading of $\Abq/B$ by the monoid $S$.

If $B$ contains no monomials, i.e., $x^\alpha \notin B$ for all $\alpha \in \Gamma^+$, then $\Abq/B$ is isomorphic to a twisted semigroup algebra $\KK^eS$.
\end{lemma}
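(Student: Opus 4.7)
The plan is to verify the congruence property directly, extract a canonical $\KK$-basis for $\Abq/B$ from Lemma \ref{basis.binom.Aq}, and read off the grading.

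First I would check that $\sim$ is a monoid congruence. Reflexivity, symmetry, and transitivity are immediate. For compatibility with the addition on $\Gamma^+$, observe that $y^\alpha y^\gamma = d(\alpha,\gamma) y^{\alpha+\gamma}$ in $\Abq/B$, so $\KK y^{\alpha+\gamma} = \KK(y^\alpha y^\gamma)$. Hence if $\KK y^\alpha = \KK y^\beta$, then $\KK y^{\alpha+\gamma} = \KK(y^\alpha y^\gamma) = \KK(y^\beta y^\gamma) = \KK y^{\beta+\gamma}$, and similarly on the left.

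Next, set $A := \Gamma^+ \setminus (A_1 \cup A_2)$, with $A_1$, $A_2$, $p(\cdot)$, and $\nu_\cdot$ as in Lemma \ref{basis.binom.Aq}. I would establish that $(y^\alpha \mid \alpha \in A)$ is a $\KK$-basis for $\Abq/B$. Every element of $\calB_1 \sqcup \calB_2$ has its leading exponent in $A_1 \sqcup A_2$ (for $\calB_2$ this uses $p(\beta) < \beta$), and these leading exponents are pairwise distinct as $\beta$ ranges over $A_1 \sqcup A_2$. A standard leading-term argument (writing an element of $B$ in the basis $\calB_1 \sqcup \calB_2$ and picking out the maximal contributing term) then yields $B \cap \Span(x^\alpha \mid \alpha \in A) = 0$. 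Well-ordered induction on the exponent, using the cases $\alpha \in A_1$ (so $x^\alpha \in B$) and $\alpha \in A_2$ (so $x^\alpha \equiv \nu_\alpha x^{p(\alpha)} \pmod{B}$ with $p(\alpha) < \alpha$), gives $\Abq = B + \Span(x^\alpha \mid \alpha \in A)$.

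With that basis in hand, I would assemble the decomposition $\Abq/B = \bigoplus_{s \in S} R_s$. For distinct $\alpha, \alpha' \in A$, the vectors $y^\alpha$ and $y^{\alpha'}$ are distinct members of the basis just constructed, so $\KK y^\alpha \ne \KK y^{\alpha'}$ and $[\alpha] \ne [\alpha']$. Conversely, iterating $p$ reduces any $\beta \in \Gamma^+$ in finitely many steps, by well-ordering, to an element either of $A_1$ (in which case $y^\beta = 0$) or of $A$ (in which case $y^\beta \in \kx y^\alpha$ for a unique $\alpha \in A$), so each class $s \in S$ is either the zero class or $[\alpha]$ for a unique $\alpha \in A$. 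Therefore $\bigoplus_{s \in S} R_s = \bigoplus_{\alpha \in A} \KK y^\alpha = \Abq/B$. The grading property $R_s R_t \subseteq R_{s+t}$ is immediate from $y^\alpha y^\gamma = d(\alpha,\gamma) y^{\alpha+\gamma}$ together with $[\alpha]+[\gamma] = [\alpha+\gamma]$.

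For the final assertion, assume $B$ contains no monomials, so $A_1 = \emptyset$. Then every reduction chain terminates in $A$, giving $y^\beta \in \kx y^{q(\beta)}$ for a unique $q(\beta) \in A$; in particular no $y^\beta$ vanishes and every $R_s$ is one-dimensional. Choosing a representative $\alpha^*_s \in s \cap A$ for each $s \in S$ and setting $z_s := y^{\alpha^*_s}$, I get a $\KK$-basis $(z_s)_{s \in S}$ of $\Abq/B$, and the identity $z_s z_t = d(\alpha^*_s, \alpha^*_t) y^{\alpha^*_s + \alpha^*_t} \in \kx z_{s+t}$ (using that $\alpha^*_s + \alpha^*_t \in s+t$ and that $y^{\alpha^*_s + \alpha^*_t}$ is a nonzero scalar multiple of $z_{s+t}$) exhibits $\Abq/B$ as a twisted semigroup algebra $\KK^e S$ for the evident cocycle $e$. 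The main obstacle will be the basis claim in the second paragraph: the leading-term bookkeeping between $A_1$, $A_2$, and $A$, and the termination of $p$-reduction, both of which rest on the well-ordering of $(\Gamma^+, \le)$.
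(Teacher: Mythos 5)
Your proposal is correct and follows essentially the same route as the paper: both arguments rest on the basis $\calB_1 \sqcup \calB_2$ of $B$ from Lemma \ref{basis.binom.Aq}, the well-ordering of $(\Gamma^+,\le)$, leading-term comparisons, and the relation $\alpha \sim p(\alpha)$, with your set $A = \Gamma^+ \setminus (A_1 \cup A_2)$ coinciding with the paper's per-class minima $\alpha_s$ for the nonzero classes. The only difference is organizational: you first isolate the statement that $(y^\alpha \mid \alpha \in A)$ is a $\KK$-basis of $\Abq/B$ and then read off the direct sum, whereas the paper proves linear independence of the class representatives directly by contradiction.
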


\begin{proof} It is clear that $\sim$ is a monoid congruence on $\Gamma^+$. Set $R := \Abq/B$, and observe that
\begin{equation}  \label{sum.Rs}
R = \sum_{\alpha \in \Gamma^+} \KK y^\alpha = \sum_{s\in S} R_s \,.
\end{equation}
Given any $s,t \in S$, choose $\alpha \in s$ and $\beta \in t$, and note that
\begin{equation}  \label{RsRt}
R_s R_t = \KK y^\alpha y^\beta = \KK y^{\alpha+\beta} = R_{s+t} \,.
\end{equation}
Thus, all that remains is to prove that the right hand sum in \eqref{sum.Rs} is direct.

We will use the notation of Lemma \ref{basis.binom.Aq} and its proof. Note that for $\alpha \in A_2$, we have $y^\alpha = \nu_\alpha y^{p(\alpha)}$, whence $\alpha \sim p(\alpha)$. For $s\in S$, set $\alpha_s := \min(s)$. Then $R_s = \KK y^{\alpha_s}$, and $R_s = 0$ if and only if $\alpha_s \in A_1$. To prove that the sum $\sum_{s\in S} R_s$ is direct, it thus suffices to show that the $y^{\alpha_s}$, for $s\in S$ such that $\alpha_s \notin A_1$, are $\KK$-linearly independent.

Suppose there is a nontrivial relation $\sum_{i=1}^m \lambda_i y^{\alpha_{s(i)}} = 0$ for some $\lambda_i \in \kx$ and some distinct $s(i) \in S$ with $\alpha_{s(i)} \notin A_1$. We may assume that $\alpha_{s(1)} < \cdots < \alpha_{s(m)}$. In view of Lemma \ref{basis.binom.Aq}, 
\begin{equation}  \label{lin.dep.modB}
\sum_{i=1}^m \lambda_i x^{\alpha_{s(i)}} = \sum_{j=1}^q \mu_j x^{\beta_j} + \sum_{k=1}^r \mu'_k \bigl( x^{\gamma_k} - \nu_{\gamma_k} x^{p(\gamma_k)} \bigr)
\end{equation}
for some $\mu_j, \mu'_k \in \kx$, some distinct $\beta_j \in A_1$, and some distinct $\gamma_k \in A_2$. Since none of the $\alpha_{s(i)}$, $\gamma_k$, or $p(\gamma_k)$ are in $A_1$, there cannot be any $\mu_j x^{\beta_j}$ terms in \eqref{lin.dep.modB}, so this equation reduces to
\begin{equation}  \label{l.d.modB'}
\sum_{i=1}^m \lambda_i x^{\alpha_{s(i)}} = \sum_{k=1}^r \mu'_k \bigl( x^{\gamma_k} - \nu_{\gamma_k} x^{p(\gamma_k)} \bigr).
\end{equation}
We may assume that $\gamma_1 < \cdots < \gamma_r$. Comparing the degrees of the left and right sides of \eqref{l.d.modB'}, we see that $\alpha_{s(m)} = \gamma_r$. This means that $\gamma_r$ is the least element of the congruence class $s(m)$. Since $p(\gamma_r) \sim \gamma_r$ and $p(\gamma_r) < \gamma_r$, this is impossible.
Therefore $( y^{\alpha_s} \mid s\in S,\; \alpha_s \notin A_1 )$ is $\KK$-linearly independent, as desired.  This completes the proof of the first conclusion of the lemma.

Finally, assume that $x^\alpha \notin B$ for all $\alpha \in \Gamma^+$, so that $y^\alpha \ne 0$ in $R$ for all $\alpha$.  For $s \in S$, choose and fix $\alpha(s) \in s$, and set $z_s := y^{\alpha(s)}$.  Then $(z_s \mid s \in S)$ is a $\KK$-basis for $R$.  Since $y^\alpha y^\beta = d(\alpha,\beta) y^{\alpha+\beta} \ne 0$ for all $\alpha, \beta \in \Gamma^+$, we have $z_s z_t \in \kx z_{s+t}$ for all $s,t \in S$.  Therefore $R$ is a twisted semigroup algebra of $S$ over $\KK$.
\end{proof} 

\begin{lemma}  \label{kerphi.binom}
Let $R$ be a unital affine $\KK$-algebra which has a grading by a commutative monoid $S$, and let $\{r_1,\dots,r_n\}$ be a set of homogeneous $\KK$-algebra generators of $R$. Assume that the homogeneous components $R_s$ of $R$ have $\KK$-dimension $\le 1$, and that there is a multiplicatively skew-symmetric matrix $\bfq := (q_{ij}) \in M_n(\kx)$ such that $r_i r_j = q_{ij} r_j r_i$ for all $i,j \in [ 1,n ]$. Then there is a unique surjective $\KK$-algebra map $\psi : \Abq \rightarrow R$ such that $\psi(x_i) = r_i$ for all $i$, and $\ker \psi$ is a binomial ideal of $\Abq$.
\end{lemma}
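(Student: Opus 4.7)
The plan is to produce $\psi$ from the universal property of $\Abq$ and then use the $S$-grading on $R$ to extract a binomial generating set for $\ker\psi$.

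First, since the $r_i$ satisfy the defining relations $r_ir_j = q_{ij}r_jr_i$ of $\Abq$, the presentation of $\Abq$ by generators and relations yields a unique unital $\KK$-algebra homomorphism $\psi:\Abq\to R$ sending $x_i\mapsto r_i$. Surjectivity is immediate from the hypothesis that $\{r_1,\dots,r_n\}$ generates $R$ as a $\KK$-algebra.

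The substantive step is to show that $\ker\psi$ is binomial. For each $\alpha\in\Gamma^+$ the image $\psi(x^\alpha)=r_1^{\alpha_1}\cdots r_n^{\alpha_n}$ is homogeneous in the $S$-grading, since each $r_i$ is homogeneous. I will partition $\Gamma^+$ into the subset $A_0:=\{\alpha\mid \psi(x^\alpha)=0\}$ together with subsets $A_s:=\{\alpha\mid \psi(x^\alpha)\in R_s\setminus\{0\}\}$, one for each $s\in S$ meeting the image of $\psi$. Since $\dim_\KK R_s\le 1$, for each such $s$ I can pick a distinguished $\alpha(s)\in A_s$, and then for every other $\alpha\in A_s$ there is a unique $\mu_\alpha\in\kx$ with $\psi(x^\alpha)=\mu_\alpha\psi(x^{\alpha(s)})$. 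This produces a family $\calB$ in $\ker\psi$ consisting of the monomials $x^\alpha$ for $\alpha\in A_0$ together with the binomials $x^\alpha-\mu_\alpha x^{\alpha(s)}$ for $\alpha\in A_s\setminus\{\alpha(s)\}$.

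To finish, I will check that every $f\in\ker\psi$ lies in the $\KK$-span of $\calB$, which implies that the ideal generated by $\calB$ equals $\ker\psi$ and is binomial by construction. Writing $f=\sum_\alpha\lambda_\alpha x^\alpha$ and grouping by the partition, one gets $f=f_0+\sum_s f_s$ with each $f_s$ supported on $A_s$. Applying $\psi$ and using that the components $R_s$ decompose $R$ independently, each $\psi(f_s)=0$, which (with the convention $\mu_{\alpha(s)}:=1$) translates to $\sum_{\alpha\in A_s}\lambda_\alpha\mu_\alpha=0$. A direct rearrangement then writes $f_s$ as a $\KK$-combination of the binomials $x^\alpha-\mu_\alpha x^{\alpha(s)}$ in $\calB$, while $f_0$ is already a combination of monomials in $\calB$.

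There is no deep obstacle; the cleanest point to handle carefully is the step deducing $\psi(f_s)=0$ piece by piece, which relies on the directness of the grading decomposition $R=\bigoplus_s R_s$ and on the fact that $\psi$ sends each $f_s$ into $R_s$.
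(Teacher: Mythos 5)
Your proof is correct, but it takes a different route from the paper's. The paper does not exhibit a binomial spanning set directly: it lets $B' \subseteq \ker\psi$ be the ideal generated by all binomials lying in $\ker\psi$, passes to $R' := \Abq/B'$, invokes Lemma \ref{Aq.mod.binom} to equip $R'$ with a grading by $S' = \Gamma^+/{\sim}$ having homogeneous components of dimension $\le 1$, and then proves that the induced surjection $\psibar : R' \rightarrow R$ is injective by showing that the map $s' \mapsto \pi\alpha(s')$ into $S$ is injective, so that a homogeneous $\KK$-basis of $R'$ is carried to elements lying in distinct homogeneous components of $R$; this forces $B' = \ker\psi$. You instead work entirely inside $\Abq$ and $R$: you partition the monomials $x^\alpha$ according to the $S$-degree of their nonzero images, use $\dim_\KK R_s \le 1$ to manufacture explicit monomials and binomials in $\ker\psi$, and then a clean homogeneous-component argument (the directness of $R = \bigoplus_s R_s$ applied to $\psi(f) = \psi(f_0) + \sum_s \psi(f_s)$) shows these elements $\KK$-span $\ker\psi$, hence generate it. Your version is more self-contained — it bypasses Lemma \ref{Aq.mod.binom} (and thus Lemma \ref{basis.binom.Aq}) entirely — and it even yields the slightly stronger observation that $\ker\psi$ is spanned, not merely generated, by the monomials and binomials it contains. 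What the paper's detour buys is structural: it identifies $R$ with $\Abq/B'$ graded by $\Gamma^+/{\sim}$ with its canonical basis, a description that is reused later (e.g., in Theorem \ref{form.Aq.mod.binom} and Corollary \ref{KcS=Aq/B}); for the bare statement of the lemma, your argument suffices. Do make explicit, as you indicate, that each nonzero $\psi(x^\alpha)$ is homogeneous of a single degree (so the sets $A_s$ really partition the complement of $A_0$) and that the convention $\mu_{\alpha(s)} = 1$ makes the rearrangement $f_s = \sum_{\alpha} \lambda_\alpha\bigl(x^\alpha - \mu_\alpha x^{\alpha(s)}\bigr)$ legitimate; both points are exactly as you describe and present no obstacle.
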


\begin{proof}
The existence and uniqueness of $\psi$ are clear.  Keeping $r_1,\dots,r_n$ fixed, set $r^\alpha := r_1^{\alpha_1} \cdots r_n^{\alpha_n}$ for all $\alpha \in \Gamma^+$; then $\psi(x^\alpha) = r^\alpha$. Also, set $B := \ker\psi$. For $i \in [1,n]$, let $s_i := \deg(r_i)$ with respect to the $S$-grading on $R$, so that $r_i \in R_{s_i}$.

Let $B'$ be the ideal of $\Abq$ generated by all those binomials of $\Abq$ that lie in $B$. Set $R' := \Abq/B'$ and write $y^\alpha := x^\alpha + B' \in R'$ for $\alpha \in \Gamma^+$. By Lemma \ref{Aq.mod.binom}, there is a monoid congruence $\sim$ on $\Gamma^+$ defined by the rule $\alpha \sim \beta \iff \KK y^\alpha = \KK y^\beta$, and $R'$ is graded by the monoid $S' := \Gamma^+/{\sim}$ where $R'_{s'} := \KK y^\alpha$ for $s' \in S'$ and $\alpha \in s'$. Note that $\psi$ induces a surjective $\KK$-algebra map $\psibar : R' \rightarrow R$ such that $\psibar(y^\alpha) = r^\alpha$ for $\alpha \in \Gamma^+$. We claim that $\psibar$ is an isomorphism. Since $\ker \psibar = B/B'$, we will then obtain $B = B'$, proving that $B$ is a binomial ideal of $\Abq$.

We have $R' = \bigoplus_{s' \in S'_\bullet} R'_{s'}$ where $S'_\bullet := \{ s' \in S' \mid R'_{s'} \ne 0 \}$. For $s' \in S'_\bullet$, choose and fix $\alpha(s') \in s'$, so that $R'_{s'} = \KK y^{\alpha(s')}$. Then $( y^{\alpha(s')} \mid s' \in S'_\bullet )$ is a $\KK$-basis for $R'$. For $s' \in S'_\bullet$, we have $x^{\alpha(s')} \notin B'$ and so $x^{\alpha(s')} \notin B$, whence $r^{\alpha(s')} \ne 0$. There is a monoid homomorphism $\pi : \Gamma^+ \rightarrow S$ such that $\pi(\alpha) = \alpha_1 s_1 + \cdots + \alpha_n s_n$ for $\alpha \in \Gamma^+$, and $r^\alpha \in R_{\pi(\alpha)}$. Since $\dim R_s \le 1$ for $s\in S$, it follows that $R_{\pi\alpha(s')} = \KK r^{\alpha(s')}$ for $s' \in S'_\bullet$.

We claim that the map $S'_\bullet \rightarrow S$ given by $s' \mapsto \pi\alpha(s')$ is injective. Suppose $s',t' \in S'_\bullet$ with $\pi\alpha(s') = \pi\alpha(t')$. Then $R_{\pi\alpha(s')} = R_{\pi\alpha(t')}$ and so $r^{\alpha(s')} = \lambda r^{\alpha(t')}$ for some $\lambda \in \kx$. Consequently, the binomial $x^{\alpha(s')} - \lambda x^{\alpha(t')}$ lies in $B$ and hence also in $B'$. It follows that $y^{\alpha(s')} = \lambda y^{\alpha(t')}$ and so $\alpha(s') \sim \alpha(t')$, which, finally, implies $s' = t'$.

Since the  $r^{\alpha(s')}$ for $s' \in S'_\bullet$ are nonzero and lie in distinct homogeneous components $R_{\pi\alpha(s')}$, they are $\KK$-linearly independent. Moreover, $\psibar$ maps the $\KK$-basis $( y^{\alpha(s')} \mid s' \in S'_\bullet )$ of $R'$ to the family $( r^{\alpha(s')} \mid s' \in S'_\bullet )$. Therefore $\psibar$ is injective and thus an isomorphism, as claimed.
\end{proof}

The grading information in Lemma \ref{kerphi.binom} allows us to show that binomial ideals of $\Tbq$ contract to binomial ideals of $\Abq$.

\begin{proposition}  \label{contract.binom}
If $I$ is a binomial ideal of $\Tbq$, then $I \cap \Abq$ is a binomial ideal of $\Abq$.
\end{proposition}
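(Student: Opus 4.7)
The plan is to reduce the statement to Lemma \ref{kerphi.binom} by exploiting the structural description of $\Tbq/I$ given by Proposition \ref{Irho.ideals}. First dispose of the degenerate case $I = \Tbq$: then $I \cap \Abq = \Abq$ is itself a binomial ideal of $\Abq$ (generated by the trivial binomial $1$). So assume $I$ is proper. By Theorem \ref{binom.Tq}(b), one can write $I = I(L,\rho)$ for a sublattice $L$ of $\Gamma_Z$ and some $\rho \in \Hom(L, \kx)$.

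By Proposition \ref{Irho.ideals}(c), $\Tbq/I$ is a twisted group algebra of $\Gamma/L$ over $\KK$, equipped with a $\Gamma/L$-grading whose homogeneous components are all one-dimensional (spanned by cosets $x^\sigma + I$ for $\sigma$ in a transversal of $L$ in $\Gamma$). Let $R$ be the image of $\Abq$ under the composition $\Abq \hookrightarrow \Tbq \twoheadrightarrow \Tbq/I$; the kernel of this composite is exactly $I \cap \Abq$, so $R \cong \Abq/(I \cap \Abq)$. The generators $r_i := x_i + I$ of $R$ are homogeneous with respect to the $\Gamma/L$-grading (with degree equal to the coset of the $i$th standard basis vector of $\Gamma$), and they satisfy $r_i r_j = q_{ij} r_j r_i$. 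Consequently $R$ inherits from $\Tbq/I$ a $\Gamma/L$-grading with homogeneous components of $\KK$-dimension at most one, and $R$ is affine over $\KK$.

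These are precisely the hypotheses of Lemma \ref{kerphi.binom}, applied with $S = \Gamma/L$ and the generators $r_1, \dots, r_n$. That lemma yields that the kernel of the induced surjection $\Abq \twoheadrightarrow R$ sending $x_i \mapsto r_i$ is a binomial ideal of $\Abq$, and this kernel equals $I \cap \Abq$. There is no serious obstacle in the argument: the proof is essentially a direct assembly of Theorem \ref{binom.Tq}(b), Proposition \ref{Irho.ideals}(c), and Lemma \ref{kerphi.binom}, the only modest check being that a graded $\KK$-subalgebra of a graded $\KK$-algebra with one-dimensional homogeneous components automatically inherits a grading with homogeneous components of dimension at most one.
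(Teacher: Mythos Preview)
Your proof is correct and follows essentially the same approach as the paper's own proof: both dispose of the case $I=\Tbq$, write $I=I(L,\rho)$ via Theorem \ref{binom.Tq}, use Proposition \ref{Irho.ideals}(c) to equip $\Tbq/I$ with a $\Gamma/L$-grading having one-dimensional components, pass to the graded subalgebra $R$ generated by the cosets $x_i+I$, and then invoke Lemma \ref{kerphi.binom}.
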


\begin{proof} Since the result is clear if $I = \Tbq$, we may assume that $I$ is a proper ideal of $\Tbq$. By Theorem \ref{binom.Tq} and Proposition \ref{Irho.ideals}(c), $I = I(L,\rho)$ for some sublattice $L$ of $\Gamma_Z$ and $\rho \in \Hom(L,\kx)$, and $\Tbq/I$ is a twisted group algebra of $\Gamma/L$ over $\KK$ with a $\KK$-basis of the form $( x^\tau + I \mid \tau \in T )$ where $T$ is any transversal for $L$ in $\Gamma$. In particular, $\Tbq/I$ is graded by the group $\Gamma/L$, with $1$-dimensional homogeneous components $(\Tbq/I)_{\tau+L} = \KK (x^\tau + I)$ for $\tau \in T$.

Now $\Abq/(I \cap \Abq)$ is isomorphic to the subalgebra $R$ of $\Tbq/I$ generated by the cosets $r_i := x_i + I$ for $i \in [ 1,n ]$. Note that the $r_i$ are homogeneous of degree $e_i + L$, where $(e_1,\dots,e_n)$ is the standard basis for $\Gamma$. Since the $r_i$ are homogeneous, $R$ is graded by the group $S := \Gamma/L$, which we also view as a commutative monoid. Note that the nonzero homogeneous components of $R$ are $1$-dimensional over $\KK$.

The composition of the inclusion map $\Abq \rightarrow \Tbq$ with the quotient map $\Tbq \rightarrow \Tbq/I$ provides a surjective $\KK$-algebra map $\psi : \Abq \rightarrow R$ such that $\psi(x_i) = r_i$ for all $i$ and $\ker \psi = I \cap \Abq$. Therefore Lemma \ref{kerphi.binom} implies that $I\cap \Abq$ is a binomial ideal of $\Abq$.
\end{proof}

\begin{theorem}  \label{form.Aq.mod.binom}
Let $R$ be a unital affine $\KK$-algebra. The following are equivalent:

{\rm(a)} $R$ is a binomial $\KK$-algebra.

{\rm(b)} $R$ has a grading by a commutative monoid $S$  such that \begin{enumerate}
\item all nonzero homogeneous components $R_s$ of $R$ are $1$-dimensional over $\KK$;
\item for any $s,t \in S$, we have $R_s R_t = 0$ if and only if $R_t R_s = 0$.
\end{enumerate}
\end{theorem}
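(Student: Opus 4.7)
The plan is to derive both implications directly from the two preceding lemmas.

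For (a)$\Longrightarrow$(b), I would start from a presentation $R \cong \Abq/B$ with $B$ a binomial ideal of some quantum affine space and invoke Lemma \ref{Aq.mod.binom} to obtain the commutative monoid $S = \Gamma^+/{\sim}$, the $S$-grading of $R$, and condition (1) directly. Condition (2) then comes for free: for any $s, t \in S$ with representatives $\alpha \in s$, $\beta \in t$, the $\bfq$-commutation relation \eqref{d.prod} together with the fact that $d(\alpha,\beta), d(\beta,\alpha) \in \kx$ yield
$$
R_s R_t \;=\; \KK\bigl(x^{\alpha+\beta}+B\bigr) \;=\; R_t R_s.
$$

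For (b)$\Longrightarrow$(a), the strategy is to extract a finite set of homogeneous generators satisfying the hypotheses of Lemma \ref{kerphi.binom} and then invoke that lemma. First I would pick a finite algebra-generating set for $R$ (which exists because $R$ is affine) and decompose each generator into its finitely many nonzero homogeneous components; these still generate $R$. Relabel them $r_1,\dots,r_n$ and set $s_i := \deg r_i$. For any pair $i,j$, both $r_i r_j$ and $r_j r_i$ lie in $R_{s_i+s_j}$, which has dimension $\le 1$, so the two products are proportional. By condition (2) they vanish simultaneously, hence there exists $q_{ij} \in \kx$ with $r_i r_j = q_{ij} r_j r_i$ (setting $q_{ij} := 1$ when both products vanish). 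A short check gives $q_{ii} = 1$ and $q_{ij} q_{ji} = 1$, so $\bfq := (q_{ij}) \in M_n(\kx)$ is multiplicatively skew-symmetric. Lemma \ref{kerphi.binom} then yields a surjection $\Abq \to R$ with binomial kernel, exhibiting $R$ as a binomial $\KK$-algebra.

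The one delicate point lies in (b)$\Longrightarrow$(a): condition (2) is precisely what rules out the pathology $r_i r_j = 0 \ne r_j r_i$, which would otherwise obstruct the existence of a consistent $q_{ij}$. Apart from this, both directions are direct applications of the preceding machinery and should present no serious obstacle.
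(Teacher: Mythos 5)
Your proposal is correct and follows essentially the same route as the paper: Lemma \ref{Aq.mod.binom} gives the grading and condition (1) for (a)$\Longrightarrow$(b) with condition (2) coming from $R_sR_t = \KK y^{\alpha+\beta} = R_tR_s$, and for (b)$\Longrightarrow$(a) one chooses finitely many nonzero homogeneous generators, uses (1) and (2) to produce a multiplicatively skew-symmetric matrix $\bfq$, and applies Lemma \ref{kerphi.binom}. The only difference is cosmetic (you spell out the decomposition of generators into homogeneous components, and you might note the trivial case $R=0$ as the paper does), so no changes are needed.
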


\begin{proof}
(a)$\Longrightarrow$(b): Without loss of generality, $R = \Abq/B$ for some multiplicatively skew-symmetric matrix $\bfq \in M_n(\kx)$, some $n \in \Zpos$, and some binomial ideal $B$ of $\Abq$. By Lemma \ref{Aq.mod.binom}, there is a grading of $R$ by a monoid $S = \Gamma^+/{\sim}$ with $R_s = \KK y^\alpha$ for $s\in S$ and $\alpha \in s$, hence Condition (1) holds.  Given $s,t \in S$, choose $\alpha \in s$ and $\beta \in t$. As noted in \eqref{RsRt}, $R_sR_t = R_{s+t}$, whence also $R_t R_s = R_{s+t}$. Condition (2) follows.

(b)$\Longrightarrow$(a): We may assume that $R \ne 0$. Since $R$ is affine, it can be generated by finitely many nonzero homogeneous elements, say $r_1,\dots,r_n$ where $r_i \in R_{s_i}$ for some $s_i \in S$.

Assumption (b)(1) implies that $R_{s_i} = \KK r_i$ for $i \in [ 1,n ]$, and then assumption (b)(2) implies that $r_ir_j = 0 \iff r_jr_i = 0$ for $i,j \in [ 1,n ]$. Given $i,j \in [ 1,n ]$ with $r_ir_j \ne 0$, we then have $R_{s_i+s_j} = \KK r_ir_j = \KK r_jr_i$, so there exist nonzero scalars $q_{ij}$ and $q_{ji} = q_{ij}^{-1}$ in $\kx$ such that
\begin{equation}  \label{build.qij}
r_i r_j = q_{ij} r_j r_i \qquad\text{and}\qquad r_j r_i = q_{ji} r_i r_j \,.
\end{equation}
Note that if $i=j$, we must have $q_{ij} = 1$.
On the other hand, if $r_ir_j = r_jr_i = 0$, we can write \eqref{build.qij} with $q_{ij} := 1 =: q_{ji}$. We now have a multiplicatively skew-symmetric matrix $\bfq := (q_{ij}) \in M_n(\kx)$ such that $r_i r_j = q_{ij} r_j r_i$ for all $i,j \in [ 1,n ]$.

There is a unique surjective $\KK$-algebra map $\psi : \Abq \rightarrow R$ such that $\psi(x_i) = r_i$ for all $i$, and Lemma \ref{kerphi.binom} shows that $B := \ker \psi$ is a binomial ideal of $\Abq$. This establishes (a).
\end{proof}

\begin{corollary}  \label{KcS=Aq/B}
Let $R$ be a unital $\KK$-algebra. Then $R$ is isomorphic to a twisted semigroup algebra $\KK^eS$ for some finitely generated commutative monoid $S$ if and only if $R \cong \Abq/B$ for some multiplicatively skew-symmetric matrix $\bfq \in M_n(\kx)$, some $n \in \Zpos$, and some binomial ideal $B$ of $\Abq$ that contains no monomials $x^\alpha$.
\end{corollary}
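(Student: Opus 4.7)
The plan is to derive both implications directly from Lemmas \ref{Aq.mod.binom} and \ref{kerphi.binom}, using the twisted semigroup algebra structure as the bridge, and then to verify the two supplementary conditions (finite generation of $S$ on one side, absence of monomials in $B$ on the other).

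For the implication $(\Longleftarrow)$, assume $R \cong \Abq/B$ with $B$ a binomial ideal of $\Abq$ containing no monomial $x^\alpha$. By Lemma \ref{Aq.mod.binom}, $\Abq/B$ is isomorphic to a twisted semigroup algebra $\KK^e S$ where $S = \Gamma^+/{\sim}$ under the monoid congruence defined there. Since $\Gamma^+ = \Znn^n$ is finitely generated as a monoid by $e_1,\dots,e_n$, its quotient $S$ is finitely generated (by the $\sim$-classes $[e_i]$), as required.

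For the implication $(\Longrightarrow)$, suppose $R \cong \KK^e S$ for a finitely generated commutative monoid $S$, with $\KK$-basis $(z_s \mid s \in S)$ satisfying $z_s z_t = e(s,t) z_{s+t}$ for some $2$-cocycle $e$. Choose monoid generators $s_1,\dots,s_n$ of $S$ and set $r_i := z_{s_i}$. The decomposition $R = \bigoplus_{s\in S} \KK z_s$ is a monoid grading in which every nonzero homogeneous component has dimension $1$, and the $r_i$ are homogeneous generators. For each pair $i,j$, the elements $r_i r_j$ and $r_j r_i$ both lie in the one-dimensional space $R_{s_i+s_j} = \KK z_{s_i+s_j}$, so there is a scalar $q_{ij} := e(s_i,s_j) e(s_j,s_i)^{-1} \in \kx$ with $r_i r_j = q_{ij} r_j r_i$; the identity $q_{ji} = q_{ij}^{-1}$ and $q_{ii}=1$ are immediate, so $\bfq := (q_{ij})$ is multiplicatively skew-symmetric. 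Lemma \ref{kerphi.binom} now yields a unique surjective $\KK$-algebra map $\psi : \Abq \to R$ sending $x_i \mapsto r_i$ whose kernel $B$ is a binomial ideal of $\Abq$, and evidently $R \cong \Abq/B$.

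The only remaining point, which is also the only mildly subtle one, is to check that $B$ contains no monomial. For any $\alpha \in \Gamma^+$, iterating the twisted multiplication relation $z_s z_t = e(s,t) z_{s+t}$ shows that $r^\alpha = r_1^{\alpha_1} \cdots r_n^{\alpha_n}$ equals a nonzero scalar multiple of $z_{\alpha_1 s_1 + \cdots + \alpha_n s_n}$, because every value of the cocycle $e$ lies in $\kx$. Hence $\psi(x^\alpha) = r^\alpha \ne 0$, so $x^\alpha \notin B$. This completes the proof. No deep obstacle appears: the heavy lifting has been done in the two lemmas, and the finite generation and no-monomials conditions are matched almost tautologically with the choice of generators on each side.
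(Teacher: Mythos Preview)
Your proof is correct and follows essentially the same approach as the paper. The only cosmetic difference is that the paper routes the $(\Longrightarrow)$ direction through Theorem \ref{form.Aq.mod.binom} (and then has to refer back to that theorem's proof to recover the specific form $\phi(x_i) = r_{s_i}$), whereas you apply Lemma \ref{kerphi.binom} directly after explicitly exhibiting the scalars $q_{ij}$; your route is marginally more self-contained for this corollary.
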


\begin{proof}
Sufficiency is given by Lemma \ref{Aq.mod.binom}.

Conversely, assume that $R \cong \KK^eS$ for some finitely generated commutative monoid $S$ and some $2$-cocycle $e : S\times S \rightarrow \kx$. Then $R$ has a $\KK$-basis $(r_s \mid s\in S)$ such that $r_s r_t = e(s,t) r_{s+t}$ for all $s,t \in S$.  Setting $R_s := \KK r_s$ for $s \in S$, we obtain an $S$-grading on $R$ such that conditions (b)(1),(2) of Theorem \ref{form.Aq.mod.binom} hold.  By that theorem, there exists a surjective $\KK$-algebra map $\phi : \Abq \rightarrow R$ where $\bfq \in M_n(\kx)$ is multiplicatively skew-symmetric, $n \in \Zpos$, and $B := \ker \phi$ is a binomial ideal of $\Abq$.  The proof of Theorem \ref{form.Aq.mod.binom} shows that $\phi$ may be chosen so that each $\phi(x_i) = r_{s_i}$ for some $s_i \in S$.  Consequently, for each $\alpha \in \Gamma^+$, there is some $s \in S$ such that $\phi(\KK x^\alpha) = \KK r_s \ne 0$, and therefore $x^\alpha \notin B$.
\end{proof}

\begin{example}
Condition (b)(2) in Theorem \ref{form.Aq.mod.binom} cannot be omitted, as the following example shows:
$$R := \KK \langle X,Y \rangle / \langle X^3, Y^2, XY, YX^2 \rangle.$$
If $x$ and $y$ denote the cosets of $X$ and $Y$ in $R$, then $(1,x,y,x^2,yx)$ is a $\KK$-basis for $R$. Clearly $R$ can be graded by the monoid $\Znn^2$, with $x \in R_{(1,0)}$ and $y \in R_{(0,1)}$. All nonzero homogeneous components of this grading are $1$-dimensional.

As is easily checked, $Z(R) = \KK + \KK x^2 + \KK yx$, and the group of units of $R$ is the set $\kx + \KK x + \KK y + \KK x^2 + \KK yx$. Further, the only normal elements in $R$ are the central elements together with the units (we leave this calculation to the reader). 

We claim that if $u$ and $v$ are any normal elements in $R$ with $uv = qvu \ne 0$ for some $q \in \kx$, then $q=1$. This is clear if $u$ or $v$ is central. Otherwise, $u$ and $v$ are units which commute modulo the ideal $\KK x + \KK y + \KK x^2 + \KK yx$, and again $q=1$. Thus, any pair of quasi-commuting normal elements of $R$ actually commute. Since $R$ is not commutative, it therefore cannot be a quotient of any quantum affine space.
\end{example}

We recall that a \emph{quantum affine toric variety} over $\KK$ (cf.~\cite{Ing, RZ}) is a twisted semigroup algebra $A = \KK^eS$ where $S$ is a finitely generated submonoid of a free abelian group of finite rank, i.e., $A$ is a subalgebra of a quantum torus $\Tbq$ of the form $\sum_{\alpha\in S} \KK x^\alpha$  for some finitely generated submonoid $S$ of $\Gamma$. Equivalently \cite{Ing}, a quantum affine toric variety is an affine domain $A$ over $\KK$ equipped with a rational action of an algebraic torus $H = (\kx)^r$ by $\KK$-algebra automorphisms such that the $H$-eigenspaces of $A$ are $1$-dimensional.

\begin{theorem}  \label{qafftoric.as.Aq/B}
Let $R$ be a unital $\KK$-algebra. If $R$ is isomorphic to a quantum affine toric variety over $\KK$, then $R \cong \Abq/B$ for some multiplicatively skew-symmetric matrix $\bfq \in M_n(\kx)$, some $n \in \Zpos$, and some completely prime binomial ideal $B$ of $\Abq$. The converse holds if $\KK$ is algebraically closed.
\end{theorem}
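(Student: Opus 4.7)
For the forward implication I would simply combine Corollary~\ref{KcS=Aq/B} with the domain property: if $R \cong \KK^eS$ is a quantum affine toric variety then by definition $R$ embeds in a quantum torus, hence is a domain; the corollary presents $R$ as $\Abq/B$ for a binomial ideal $B$ containing no monomials, and the domain property of $\Abq/B$ upgrades $B$ to completely prime.

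For the converse, assume $\KK$ is algebraically closed and $R \cong \Abq/B$ with $B$ completely prime and binomial. I would first arrange that $B$ contains no monomial: any variable $x_i$ lying in $B$ may be deleted from the presentation, and once no $x_i \in B$, a monomial $x^\alpha \in B$ would force some $\overline{x_i}=0$ in the domain $\Abq/B$, contradicting the reduction. Corollary~\ref{KcS=Aq/B} then gives $R \cong \KK^eS$ for a finitely generated commutative monoid $S$ with basis $(r_s \mid s\in S)$ satisfying $r_sr_t = e(s,t)r_{s+t}$. The monoid $S$ is cancellative: from $s+u=t+u$ we get $r_s \in \kx r_t$, which forces $s=t$ since the $r_s$ are distinct basis elements. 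Thus $S$ embeds in its Grothendieck group $G := G(S)$, a finitely generated abelian group.

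Next I would Ore-localize $R$ at the set $\Sigma := \{r_s \mid s\in S\}$ of regular homogeneous elements; the Ore conditions are immediate from $r_sr_u \in \kx r_ur_s$. The localization $R[\Sigma^{-1}]$ inherits a $G$-grading extending the $S$-grading on $R$, and the $1$-dimensionality of each $R_s$ propagates to show that every homogeneous component has the form $(R[\Sigma^{-1}])_g = \KK\cdot r_sr_t^{-1}$ for any presentation $g=s-t$ in $S$, and is in particular $1$-dimensional. Hence $R[\Sigma^{-1}]$ is a twisted group algebra $\KK^{e'}G$.

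The main obstacle, and the only place algebraic closedness of $\KK$ is used, is to show that $G$ is torsion-free. Suppose for contradiction $g \in G$ has prime order $p$, and pick a nonzero $z_g \in (R[\Sigma^{-1}])_g$. Then $z_g^p$ sits in $(R[\Sigma^{-1}])_0 = \KK$, say $z_g^p = \lambda \in \kx$. Choose $\mu \in \kx$ with $\mu^p = \lambda$ and a primitive $p$-th root of unity $\zeta \in \KK$; inside the commutative subalgebra $\KK[z_g]$ we then have
\[
0 = z_g^p - \mu^p = \prod_{i=0}^{p-1}(z_g - \mu\zeta^i),
\]
so the domain property of $R[\Sigma^{-1}]$ forces $z_g = \mu\zeta^i \in \KK$ for some $i$, contradicting $g\ne 0$ in the $G$-grading. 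Therefore $G \cong \ZZ^r$, the ring $R[\Sigma^{-1}]$ is a quantum torus, and the inclusion $R \subseteq R[\Sigma^{-1}]$ realizes $R$ as the subalgebra $\sum_{s\in S}\KK r_s$ of a quantum torus, which by definition is a quantum affine toric variety.
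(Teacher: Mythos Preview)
Your proof is correct and follows the paper's overall strategy closely for the forward direction and the first half of the converse (reduction to monomial-free $B$, invoking Corollary~\ref{KcS=Aq/B}, and cancellativity of $S$). The divergence comes in the torsion-free step for $G$: the paper stays inside $R$, takes a torsion element $g=s-t$ of arbitrary order $m$, and explicitly constructs an element $w\in R$ with $(z_s-\mu z_t)w=z_s^m-\lambda z_t^m=0$, using the commutation scalar $\eta$ to match powers. You instead Ore-localize at $\Sigma=\{r_s\}$ to obtain a twisted group algebra $\KK^{e'}G$, pick $z_g$ of prime order $p$ there, and factor $z_g^p-\lambda$ inside the commutative subalgebra $\KK[z_g]$. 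Your route is conceptually cleaner and avoids the somewhat delicate construction of $w$; the cost is the extra verification of the Ore conditions and the $G$-grading on the localization, both of which you handle correctly.

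One small slip: you invoke a primitive $p$-th root of unity $\zeta\in\KK$, but if $\charr\KK=p$ no such $\zeta$ exists. The fix is immediate---simply say that the polynomial $X^p-\lambda$ splits into linear factors over the algebraically closed field $\KK$ (in characteristic $p$ it is $(X-\mu)^p$), so $\prod_i(z_g-\mu_i)=0$ in the domain $R[\Sigma^{-1}]$, forcing $z_g\in\KK$. With that adjustment your argument goes through in all characteristics.
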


\begin{proof}
The first implication is immediate from Theorem \ref{form.Aq.mod.binom}.

Now assume that $\KK$ is algebraically closed, and that $R \cong \Abq/B$ for some $\Abq$ and some completely prime binomial ideal $B$ of $\Abq$.  Set $J := \{ j \in [1,n] \mid x_j \in B \}$ and $I := \langle x_j \mid j \in J \rangle \vartriangleleft \Abq$.  Then $R \cong (\Abq/I)/(B/I)$.  Since $\Abq/I$ is a quantum affine space, we may replace $\Abq$ and $B$ by $\Abq/I$ and $B/I$, and so there is no loss of generality in assuming that $x_i \notin B$ for all $i$.  As $B$ is completely prime, it follows that $x^\alpha \notin B$ for all $\alpha \in \Gamma^+$. 

By Corollary \ref{KcS=Aq/B}, $R \cong \KK^eS$ for some finitely generated commutative monoid $S$ and some $2$-cocycle $e : S\times S \rightarrow \kx$.  Hence, $R$ has a $\KK$-basis $(z_s \mid s \in S)$ such that $z_s z_t = e(s,t) z_{s+t}$ for all $s,t \in S$.

We next claim that $S$ is cancellative. Suppose $s,t,u \in S$ with $s+u = t+u$.  Then $z_s z_u = e(s,u) z_{s+u} = e(s,u) e(t,u)^{-1} z_t z_u$.  Since $R$ is a domain, $z_s = e(s,u) e(t,u)^{-1} z_t$, whence $s = t$ as desired.

Let $G$ be the universal group of $S$, an abelian group containing $S$ as a submonoid which generates $G$. Thus, $G$ is finitely generated. We claim that $G$ is torsionfree. If not, it contains a nonzero element $g$ with finite order $m>1$. Write $g = s-t$ for some distinct $s,t \in S$.  Since $z_s^m \in \kx z_{ms}$ and $z_t^m \in \kx z_{mt}$, there is some $\lambda \in \kx$ such that $z_s^m = \lambda z_t^m$.  We also have $z_t z_s = \eta z_s z_t$ for some $\eta \in \kx$.  As $\KK$ is algebraically closed, $\lambda \eta^{- m(m-1)/2} = \mu^m$ for some $\mu \in \kx$. Set
$$
w := \sum_{i=1}^m \mu^{i-1} \eta^{(i-1)m - i(i-1)/2} z_s^{m-i} z_t^{i-1} \,,
$$
and calculate that $(z_s - \mu z_t) w = z_s^m - \lambda z_t^m = 0$.  Since 
$$
z_s^{m-i} z_t^{i-1} \in \kx z_{(m-i)s+(i-1)t}
$$
and $(m-i)s+(i-1)t \ne (m-j)s+(j-1)t$ for $1 \le i < j \le m$, the elements $z_s^{m-i} z_t^{i-1}$ for $i \in [1,m]$ are $\KK$-linearly independent, whence $w \ne 0$. Consequently, $z_s - \mu z_t = 0$. However, this forces $s = t$, contradicting the assumption that $g \ne 0$. Thus $G$ is torsionfree, as claimed.

Now $G$ is free abelian of finite rank, and therefore $R$ is a quantum affine toric variety over $\KK$.
\end{proof}

The converse implication in Theorem \ref{qafftoric.as.Aq/B} can easily fail when $\KK$ is not algebraically closed, e.g., in the case when $\Abq = \RR[x]$ and $B = \langle x^2+1 \rangle$.

\begin{remark}  \label{more.binom.algs}
Eisenbud and Sturmfels showed in \cite[Corollary 1.9]{EiSt} that various algebras associated with a monomial ideal in a binomial algebra are themselves binomial algebras.  There are good noncommutative analogs for blowup, Rees, and associated graded algebras, as follows.

Let $R$ be a unital affine binomial $\KK$-algebra, graded by a commutative monoid $S$ as in Theorem \ref{form.Aq.mod.binom}(b), and let $\sigma$ be a $\KK$-algebra automorphism of $R$ such that the homogeneous components $R_s$ of $R$ are all $\sigma$-invariant.  The corresponding \emph{skew-Laurent algebra}, $R[z^{\pm1};\sigma]$, is a free left $R$-module with basis $(z^n \mid n \in \ZZ)$, and with $zr = \sigma(r)z$ for $r \in R$.  The $S$-grading on $R$ extends naturally to an $(S\times\ZZ)$-grading on $R[z^{\pm1};\sigma]$, where $R[z^{\pm1};\sigma]_{(s,m)} := R_s z^m$ for $(s,m) \in S\times\ZZ$.  Theorem \ref{form.Aq.mod.binom} implies that $R[z^{\pm1};\sigma]$ is a binomial $\KK$-algebra, as is any unital affine $(S\times\ZZ)$-homogeneous subalgebra.  

Now suppose that $I$ is a finitely generated $(S\times\ZZ)$-homogeneous $\sigma$-invariant ideal of $R$.  Then, by the above, the algebras
$$
\calB(R,I,\sigma) := \bigoplus_{m=0}^\infty I^m z^m \qquad\text{and}\qquad \calR(R,I,\sigma) := \bigoplus_{m=-\infty}^{-1} R z^m \oplus \bigoplus_{m=0}^\infty I^m z^m
$$
are binomial $\KK$-algebras.  The associated graded algebra $\gr_I R = \bigoplus_{m=0}^\infty I^m/I^{m+1}$ is isomorphic to $\calB(R,I,\id)/I \calB(R,I,\id)$, whence it is graded by $S\times\ZZ$ with homogeneous components having dimension at most $1$.  The conditions on $R$ imply that
$$
R[z^{\pm1};\id]_{(s,m)} R[z^{\pm1};\id]_{(t,l)} = R[z^{\pm1};\id]_{(s+t,m+l)} = R[z^{\pm1};\id]_{(t,l)}R[z^{\pm1};\id]_{(s,m)}
$$
for all $(s,m),(t,l) \in S\times\ZZ$, from which it follows that the analogous property holds in $\calB(R,I,\id)$ and in $\gr_I R$.  Therefore another application of Theorem \ref{form.Aq.mod.binom} implies that $\gr_I R$ is a binomial $\KK$-algebra.
\end{remark}

\sectionnew{Binomial ideals of $\Abq$ not containing monomials}  \label{binom.Aq.w/o.monom}

Those binomial ideals of $\Abq$ that contain no monomials in the generators $x_i$ are closely related to binomial ideals of $\Tbq$ via localization and contraction. In particular, prime (binomial) ideals of $\Abq$ not containing monomials correspond precisely to prime (binomial) ideals of $\Tbq$. This allows the results of Section \ref{binomTq} on radicals, minimal and associated primes to be carried over to binomial ideals of $\Abq$ not containing monomials. We deal with general binomial ideals of $\Abq$ in Section \ref{genbinom.Aq}.

\subsection{}
For $\alpha \in \Gamma$, let $\alpha_{\pm}$ denote the \emph{positive} and \emph{negative parts} of $\alpha$:
\begin{align*}
\alpha_+ &:= (\max(\alpha_1,0), \dots, \max(\alpha_n,0))  \\
\alpha_- &:= (- \min(\alpha_1,0), \dots, - \min(\alpha_n,0)),
\end{align*} 
so that $\alpha_{\pm} \in \Gamma^+$ and $\alpha = \alpha_+ - \alpha_-$. Note that $(\alpha_+)_i$ and $(\alpha_-)_i$ cannot both be positive for any $i \in [ 1,n ]$, whence $\sum_{i=1}^n (\alpha_+)_i (\alpha_-)_i = 0$. Let us abbreviate this condition as $\alpha_+ \perp \alpha_-$. Conversely, if $\beta, \gamma \in \Gamma^+$ with $\beta \perp \gamma$, then $\beta = (\beta - \gamma)_+$ and $\gamma = (\beta - \gamma)_-$.

For any sublattice $L$ of $\Gamma_Z$ and $\rho \in \Hom(L,\kx)$, define
$$I_A(L,\rho) := \langle x^{\alpha_+} - c(\alpha)^{-1} d(\alpha, \alpha_-)^{-1} \rho(\alpha) x^{\alpha_-} \mid \alpha \in L \rangle \vartriangleleft \Abq \,.$$

\begin{theorem}  \label{IcapAq}
If $L$ is a sublattice of $\Gamma_Z$ and $\rho \in \Hom(L,\kx)$, then
$$I(L,\rho) \cap \Abq = I_A(L,\rho) .$$
\end{theorem}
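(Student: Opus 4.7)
My plan is to prove the two inclusions separately. For $I_A(L,\rho) \subseteq I(L,\rho) \cap \Abq$, I would observe that \eqref{d.prod} yields $x^\alpha x^{\alpha_-} = d(\alpha,\alpha_-)\, x^{\alpha_+}$, so each generator of $I_A(L,\rho)$ can be rewritten as
$$x^{\alpha_+} - c(\alpha)^{-1}d(\alpha,\alpha_-)^{-1}\rho(\alpha)\, x^{\alpha_-} = d(\alpha,\alpha_-)^{-1}\bigl(x^\alpha - c(\alpha)^{-1}\rho(\alpha)\bigr) x^{\alpha_-},$$
which visibly lies in $I(L,\rho) \cap \Abq$.

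For the reverse inclusion, I would set $J := I(L,\rho) \cap \Abq$ and reduce the problem to a comparison of monoid congruences on $\Gamma^+$. First, Proposition \ref{contract.binom} shows that $J$ is a binomial ideal of $\Abq$, and both $J$ and $I_A(L,\rho)$ contain no monomials: Proposition \ref{Irho.ideals}(c) realizes $\Tbq/I(L,\rho)$ as a twisted group algebra of $\Gamma/L$ in which each $x^\alpha + I(L,\rho)$ is a unit, ruling out $x^\alpha \in J$; and the identity above implies $I_A(L,\rho) \cdot \Tbq = I(L,\rho)$, so a monomial in $I_A(L,\rho)$ would force $I(L,\rho) = \Tbq$, a contradiction. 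Applying Lemma \ref{Aq.mod.binom} to each of $I_A(L,\rho)$ and $J$ then produces monoid congruences $\sim_A$ and $\sim_J$ on $\Gamma^+$ such that $\Abq/I_A(L,\rho)$ and $\Abq/J$ have $\KK$-bases indexed by $\Gamma^+/\sim_A$ and $\Gamma^+/\sim_J$ respectively. The containment $I_A(L,\rho) \subseteq J$ yields $\sim_A \subseteq \sim_J$ and a graded surjection $\Abq/I_A(L,\rho) \twoheadrightarrow \Abq/J$.

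The crux will be to show $\sim_A = \sim_J$; once this is established, the surjection above sends one $\KK$-basis onto another and is thus an isomorphism, giving $I_A(L,\rho) = J$. The twisted-group-algebra structure of $\Tbq/I(L,\rho)$ gives $\alpha \sim_J \beta$ iff $\alpha - \beta \in L$. For the remaining direction $\sim_J \subseteq \sim_A$, given $\alpha,\beta \in \Gamma^+$ with $\gamma := \alpha - \beta \in L$, I would set $\eta := \beta - \gamma_-$; a componentwise check gives $\eta_i = \min(\alpha_i,\beta_i) \ge 0$ (so $\eta \in \Gamma^+$), together with $\eta + \gamma_+ = \alpha$ and $\eta + \gamma_- = \beta$. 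Left-multiplying the generator $x^{\gamma_+} - c(\gamma)^{-1}d(\gamma,\gamma_-)^{-1}\rho(\gamma)\, x^{\gamma_-}$ by $x^\eta$ and expanding via \eqref{d.prod} yields an element $\lambda x^\alpha - \mu x^\beta \in I_A(L,\rho)$ with $\lambda,\mu \in \kx$, whence $\alpha \sim_A \beta$. The main conceptual hurdle is spotting this choice $\eta = \min(\alpha,\beta)$: it is precisely what lets the single binomial relation between $x^{\gamma_+}$ and $x^{\gamma_-}$ be lifted, via one left-multiplication, to a proportional relation between $x^\alpha$ and $x^\beta$ modulo $I_A(L,\rho)$; everything else is formal.
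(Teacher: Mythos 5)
Your proof is correct, and while the easy inclusion $I_A(L,\rho) \subseteq I(L,\rho) \cap \Abq$ is handled exactly as in the paper, your argument for the reverse inclusion takes a genuinely different route. The paper also starts from Proposition \ref{contract.binom} (so that $I(L,\rho)\cap\Abq$ is spanned by binomials), but then works element by element: given a binomial $\lambda x^\beta + \mu x^\gamma$ in the contraction, it factors out $x^\delta$ with $\delta$ the componentwise minimum of $\beta,\gamma$ to reduce to the case $\beta\perp\gamma$, multiplies by $(x^\gamma)^{-1}$ to land in $\Tbq$, and invokes the uniqueness statement of Theorem \ref{biject.pairs.binom.Tq} to identify both the lattice element $\beta-\gamma\in L$ and the precise scalar, concluding that the binomial is one of the listed generators of $I_A(L,\rho)$. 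You instead compare the two monoid congruences furnished by Lemma \ref{Aq.mod.binom} (which the paper's proof of this theorem does not use), showing $\sim_J$ is exactly congruence modulo $L$ and that this congruence is already realized by $\sim_A$ via your multiplier $x^\eta$ with $\eta = \min(\alpha,\beta)$ — the mirror image of the paper's factoring out $x^\delta$ — and then finish by a basis-against-basis comparison of the graded quotients. What your route buys is that you never have to verify the scalar: the one-dimensionality of the homogeneous components does that bookkeeping automatically, whereas the paper's argument must match the coefficient against $c(\alpha)^{-1}d(\alpha,\alpha_-)^{-1}\rho(\alpha)$ explicitly; the price is the extra grading machinery and the (correct, but strictly unnecessary) intermediate observations such as $I_A(L,\rho)\cdot\Tbq = I(L,\rho)$ — for the absence of monomials in $I_A(L,\rho)$ it is enough to note that $I_A(L,\rho)\subseteq I(L,\rho)$, that monomials are units in $\Tbq$, and that $I(L,\rho)$ is proper.
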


\begin{proof}
For $\alpha \in L$, observe that $I(L,\rho)$ contains the element
$$\bigl[ x^\alpha - c(\alpha)^{-1} \rho(\alpha) \bigr] x^{\alpha_-} = d(\alpha, \alpha_-) \bigl[ x^{\alpha_+} - c(\alpha)^{-1} d(\alpha, \alpha_-)^{-1} \rho(\alpha) x^{\alpha_-} \bigr].$$
Thus, $I_A(L,\rho) \subseteq I(L,\rho) \cap \Abq$.

To establish the reverse inclusion, it suffices to show that all binomials in $I(L,\rho) \cap \Abq$ lie in $I_A(L,\rho)$, since $I(L,\rho) \cap \Abq$ is a binomial ideal of $\Abq$ (Proposition \ref{contract.binom}).

Let $\lambda x^\beta + \mu x^\gamma$ be an arbitrary binomial in $I(L,\rho) \cap \Abq$, with $\lambda,\mu \in \KK$ and $\beta,\gamma \in \Gamma^+$. Since $I(L,\rho)$ contains no monomials, $\lambda$ and $\mu$ are both nonzero. Now $\lambda x^\beta + \mu x^\gamma = \lambda (x^\beta - \nu x^\gamma)$ where $\nu := - \lambda^{-1} \mu \in \kx$, and it suffices to show that $x^\beta - \nu x^\gamma \in I_A(L,\rho)$. Set
$$\delta := \bigl( \min(\beta_1,\gamma_1) , \dots , \min(\beta_n,\gamma_n) \bigr), \qquad \beta' := \beta - \delta, \qquad \gamma' := \gamma - \delta,$$
so that $\beta = \beta' + \delta$ and $\gamma = \gamma' + \delta$ with $\beta', \gamma', \delta \in \Gamma^+$ and $\beta' \perp \gamma'$. Now
$$x^\beta - \nu x^\gamma = d(\beta',\delta)^{-1} x^{\beta'} x^\delta - \nu d(\gamma', \delta)^{-1} x^{\gamma'} x^\delta = d(\beta', \delta)^{-1} \bigl( x^{\beta'} - \nu' x^{\gamma'} \bigr) x^\delta,$$
with $\nu' := \nu d(\beta',\delta) d(\gamma', \delta)^{-1} \in \kx$ and $x^{\beta'} - \nu' x^{\gamma'} \in I(L,\rho) \cap \Abq$, and it is enough to show that $x^{\beta'} - \nu' x^{\gamma'} \in I_A(L,\rho)$. Thus, with no loss of generality, $\beta \perp \gamma$.

Set $\alpha := \beta - \gamma$, so that $\alpha_+ = \beta$ and $\alpha_- = \gamma$. Now
$$( x^{\beta} - \nu x^{\gamma} ) (x^\gamma)^{-1} = d(\gamma, \gamma) x^\beta x^{- \gamma} - \nu = d(\gamma, \gamma) d(\beta, - \gamma) x^\alpha - \nu = d(\alpha, \gamma)^{-1} x^\alpha - \nu,$$
and so $x^\alpha - \nu d(\alpha,\gamma) \in I(L,\rho)$. Theorem \ref{biject.pairs.binom.Tq} implies that $\alpha \in L$ and $\nu d(\alpha, \gamma) = c(\alpha)^{-1} \rho(\alpha)$. Therefore
$$x^{\beta} - \nu x^{\gamma} = x^{\alpha_+} - c(\alpha)^{-1} d(\alpha, \alpha_-)^{-1} \rho(\alpha) x^{\alpha_-} \in I_A(L,\rho),$$
as desired.
\end{proof}
In view of Lemma \ref{ILrho.basis}, one might expect that if $(\beta(1), \dots, \beta(r))$ is a basis for $L$, then $I_A(L,\rho)$ can be generated by the elements
$$x^{\beta(l)_+} - c(\beta(l))^{-1} d(\beta(l), \beta(l)_-)^{-1} \rho(\beta(l)) x^{\beta(l)_-}, \qquad l \in [1,r].$$
However, this fails in general, as the following example, adapted from \cite[Example 2.16]{Mil}, shows.

\begin{example}  \label{eg.basisnongen}
Let $n=4$ and
$$\bfq = \begin{bmatrix}
1&q&q^2&q^3\\  q^{-1}&1&q&q^2\\  q^{-2}&q^{-1}&1&q\\  q^{-3}&q^{-2}&q^{-1}&1  \end{bmatrix}$$
where $q \in \kx$ is a non-root of unity. One checks that $\beta(1) := (1,-2,1,0)$ and $\beta(2) := (0,1,-2,1)$ form a basis for $\Gamma_Z$, so that $\Zq$ is a Laurent polynomial ring in two variables, $x^{\beta(1)} = x_1 x_2^{-2} x_3$ and $x^{\beta(2)} = x_2 x_3^{-2} x_4$. Set $L := \Gamma_Z$, and take $\rho \in \Hom(L,\kx)$ to be the homomorphism such that $\rho(\beta(l)) = q^{-2}$ for $l=1,2$.

Clearly $c(\beta(l)) = 1$ for $l=1,2$, and one checks that $d(\beta(l), \beta(l)_-) = q^{-2}$. Thus, $I_A(L,\rho)$ contains the elements
\begin{equation}  \label{expl.plusminus.basis}
\begin{aligned}
x^{\beta(l)_+} - c(\beta(l))^{-1} &d(\beta(l), \beta(l)_-)^{-1} \rho(\beta(l)) x^{\beta(l)_-}  \\
&= x^{\beta(l)_+} - x^{\beta(l)_-} = \begin{cases}  x_1x_3 - x_2^2 &(l=1)\\ x_2x_4 - x_3^2 &(l=2).
\end{cases}
\end{aligned}
\end{equation}
We show that these two elements do not generate $I_A(L,\rho)$. 

Consider the element $\beta := (1,-1,-1,1) = \beta(1) + \beta(2)$ in $L$. One checks that
$$c(\beta) = q^{-1}, \qquad d(\beta, \beta_-) = q^{-2}, \qquad \rho(\beta) = q^{-4},$$
and so $I_A(L,\rho)$ contains the element
$$x^{\beta_+} - c(\beta)^{-1} d(\beta, \beta_-)^{-1} \rho(\beta) x^{\beta_-} = x_1x_4 - q^{-1} x_2x_3 \,.$$
Since $x_1 x_3 - x_2^2$ and $x_2 x_4 - x_3^2$ lie in the ideal $\langle x_2, x_3\rangle$ of $\Abq$ but $x_1 x_4 - q^{-1} x_2x_3$ does not,
we conclude that the elements listed in \eqref{expl.plusminus.basis} do not generate $I_A(L,\rho)$.
\end{example}

\subsection{}  \label{xbull}
Let $x_\bullet$ denote the multiplicatively closed subset of $\Abq$ generated by $\{ x_1,\dots,x_n\}$, and set
\begin{equation}  \label{def.specprim0}
\begin{aligned}
\Spec_0 \Abq &:= \{ P \in \Spec \Abq \mid P \cap x_\bullet = \varnothing \} = \{ P \in \Spec \Abq \mid  x_1,\dots,x_n \notin P \}  \\
\Prim_0 \Abq &:= \Prim \Abq \cap \Spec_0 \Abq \,.
\end{aligned}
\end{equation}
(The second equality in the first line of \eqref{def.specprim0} relies on the fact that the $x_i$ are normal elements of $\Abq$.) In case $\KK$ is infinite, the sets $\Spec_0 \Abq$ and $\Prim_0 \Abq$ are the \emph{$0$-strata} of $\Spec \Abq$ and $\Prim \Abq$ in the sense of \cite[Definitions II.2.1]{BG}, relative to a natural action of the torus $\HH := (\kx)^n$ on $\Abq$.

Now $\Tbq$ is the left and right Ore localization $\Abq[x_\bullet^{-1}]$. Since $\Abq$ is noetherian, extension (localization) and contraction provide inverse bijections
\begin{equation}  \label{primes.localization}
\Spec_0 \Abq \longleftrightarrow \Spec \Tbq\,, \qquad P \longmapsto P \Tbq\,, \qquad Q \cap \Abq \longmapsfrom Q
\end{equation}
(e.g., \cite[Theorem 10.20]{GW}).  In particular, any radical ideal of $\Tbq$ contracts to a radical ideal of $\Abq$.

\begin{corollary}  \label{IALrho.radical}
Let $L$ be a sublattice of $\Gamma_Z$ and $\rho \in \Hom(L,\kx)$. If $\charr \KK = 0$, then $I_A(L,\rho)$ is a radical ideal of $\Abq$.
\end{corollary}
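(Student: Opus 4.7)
The plan is to chain together three facts already established. First, by Theorem \ref{IcapAq}, we have the identification
$$I_A(L,\rho) = I(L,\rho) \cap \Abq,$$
so it suffices to show that $I(L,\rho) \cap \Abq$ is a radical ideal of $\Abq$.

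Next, since $\charr\KK = 0$, Proposition \ref{binomTq.radical} gives that every binomial ideal of $\Tbq$ — in particular $I(L,\rho)$ itself — is a radical ideal of $\Tbq$.

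Finally, by the discussion following \eqref{primes.localization}, the correspondence induced by the Ore localization $\Tbq = \Abq[x_\bullet^{-1}]$ sends any radical ideal of $\Tbq$ to a radical ideal of $\Abq$ via contraction; concretely, if $I(L,\rho) = \bigcap_i Q_i$ with each $Q_i \in \Spec \Tbq$, then $I(L,\rho) \cap \Abq = \bigcap_i (Q_i \cap \Abq)$, and each $Q_i \cap \Abq$ lies in $\Spec_0 \Abq$. Combining these three observations, $I_A(L,\rho)$ is a radical ideal of $\Abq$.

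No step presents any real obstacle here, as the corollary is essentially a formal consequence of previously-established results; the work has all been done in Theorem \ref{IcapAq} (to identify the contraction) and Proposition \ref{binomTq.radical} (for the characteristic-zero radicality statement in $\Tbq$). The only minor point to verify is the assertion that contraction through the Ore localization preserves radicality, but this is already noted in \S\ref{xbull} as a consequence of the bijection \eqref{primes.localization}.
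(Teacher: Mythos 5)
Your proof is correct and is essentially the paper's own argument: the paper's proof simply cites Proposition \ref{binomTq.radical} and Theorem \ref{IcapAq}, with the fact that radical ideals of $\Tbq$ contract to radical ideals of $\Abq$ already recorded in \S\ref{xbull} after \eqref{primes.localization}, exactly as you use it.
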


\begin{proof}
Proposition \ref{binomTq.radical} and Theorem \ref{IcapAq}.
\end{proof}

This corollary does not imply, however, that in characteristic zero binomial ideals containing no monomials are necessarily radical ideals.  For instance, take the binomial ideal $\langle x_1^2x_2 + x_1^2 \rangle$ in the commutative polynomial ring $\KK[x_1,x_2]$ (in any characteristic).

\subsection{}
For any ideal $I$ of $\Abq$, let $(I:x_\bullet)$ denote the \emph{$x_\bullet$-closure} of $I$, that is, the ideal
\begin{equation}  \label{def.I:xbull}
(I:x_\bullet) := \{ a \in \Abq \mid ay \in I\ \text{for some}\ y \in x_\bullet \} = I\Tbq \cap \Abq \,.
\end{equation}

\begin{corollary}  \label{xbull.closure}
If $I$ is a binomial ideal of $\Abq$ not containing any monomials, there exist a unique sublattice $L$ of $\Gamma_Z$ and a unique $\rho \in \Hom(L,\kx)$ such that $(I:x_\bullet) = I_A(L,\rho)$. If, further, $I$ is a prime ideal, then $I = I_A(L,\rho)$.
\end{corollary}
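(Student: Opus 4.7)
The plan is to localize, invoke the bijection from Theorem \ref{biject.pairs.binom.Tq}, and then contract back using Theorem \ref{IcapAq}.

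First I would form the extension $I\Tbq$. Since $I$ is generated by binomials of $\Abq$ (which remain binomials of $\Tbq$), the ideal $I\Tbq$ is a binomial ideal of $\Tbq$. Moreover $I\Tbq$ is proper: if $1 \in I\Tbq$ then $y \in I$ for some $y \in x_\bullet$, contradicting the hypothesis that $I$ contains no monomials. Theorem \ref{biject.pairs.binom.Tq} then produces unique $L$ and $\rho$ with $I\Tbq = I(L,\rho)$. Combining the identity $(I:x_\bullet) = I\Tbq \cap \Abq$ from \eqref{def.I:xbull} with Theorem \ref{IcapAq} yields
\[
(I:x_\bullet) \;=\; I(L,\rho)\cap \Abq \;=\; I_A(L,\rho),
\]
giving existence.

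For uniqueness of the pair $(L,\rho)$, I would show that $I_A(L,\rho)\Tbq = I(L,\rho)$. The inclusion $\subseteq$ is clear from Theorem \ref{IcapAq}; for the reverse, the first displayed identity in the proof of Theorem \ref{IcapAq} says that each generator $x^\alpha - c(\alpha)^{-1}\rho(\alpha)$ of $I(L,\rho)$ equals $d(\alpha,\alpha_-)\bigl[x^{\alpha_+} - c(\alpha)^{-1}d(\alpha,\alpha_-)^{-1}\rho(\alpha)x^{\alpha_-}\bigr]\cdot (x^{\alpha_-})^{-1}$, which lies in $I_A(L,\rho)\Tbq$. Hence if $I_A(L,\rho) = I_A(L',\rho')$, extending to $\Tbq$ gives $I(L,\rho)=I(L',\rho')$, and the uniqueness part of Theorem \ref{biject.pairs.binom.Tq} forces $(L,\rho) = (L',\rho')$.

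For the final assertion, assume $I$ is prime. Since $I$ contains no monomials, $I$ meets $x_\bullet$ trivially (primality promotes ``no generator $x_i$ lies in $I$'' to ``no product of generators lies in $I$''). Thus $I \in \Spec_0 \Abq$, and the localization bijection \eqref{primes.localization} gives $I = I\Tbq \cap \Abq = (I:x_\bullet) = I_A(L,\rho)$.

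I do not anticipate a serious obstacle here: the content is essentially assembling Theorems \ref{biject.pairs.binom.Tq} and \ref{IcapAq} with the Ore localization bijection. The one step that might warrant care is the verification $I_A(L,\rho)\Tbq = I(L,\rho)$, which is what carries uniqueness of $(L,\rho)$ for $I(L,\rho)$ across to uniqueness for $I_A(L,\rho)$; this is where the precise form of the generators of $I_A(L,\rho)$ (with the scalars $c(\alpha)^{-1}d(\alpha,\alpha_-)^{-1}\rho(\alpha)$) gets used.
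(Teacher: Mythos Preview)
Your proposal is correct and follows essentially the same approach as the paper: extend $I$ to the proper binomial ideal $I\Tbq$, apply Theorem \ref{biject.pairs.binom.Tq} to obtain the unique pair $(L,\rho)$ with $I\Tbq = I(L,\rho)$, and contract via Theorem \ref{IcapAq} to get $(I:x_\bullet) = I_A(L,\rho)$; the prime case is the observation $I = (I:x_\bullet)$. Your treatment of uniqueness is slightly more explicit than the paper's (you verify $I_A(L,\rho)\Tbq = I(L,\rho)$ directly, whereas the paper tacitly relies on the localization identity $(I:x_\bullet)\Tbq = I\Tbq$), but the underlying argument is the same.
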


\begin{proof}
By assumption, $I$ is disjoint from $x_\bullet$, whence $I\Tbq$ is a proper binomial ideal of $\Tbq$. By Theorem \ref{biject.pairs.binom.Tq}, there exist a unique sublattice $L$ of $\Gamma_Z$ and a unique $\rho \in \Hom(L,\kx)$ such that $I\Tbq = I(L,\rho)$. Thus $(I:x_\bullet) = I\Tbq \cap \Abq = I_A(L,\rho)$ by Theorem \ref{IcapAq}. In case $I$ is a prime ideal, $I = (I: x_\bullet)$.
\end{proof}

We now harvest information about the ideals $I_A(L,\rho)$ from the results of Section \ref{binomTq}, via localization and stratification.

\begin{theorem}  \label{prim0Aq}
{\rm(a)} For any $\rho \in \Hom(\Gamma_Z,\kx)$, the ideal $I_A(\Gamma_Z,\rho)$ is a primitive ideal of $\Abq$.

{\rm(b)} If $\KK$ is algebraically closed, then every primitive ideal of $\Abq$ that contains no monomials has the form $I_A(\Gamma_Z,\rho)$ for some $\rho \in \Hom(\Gamma_Z,\kx)$.
\end{theorem}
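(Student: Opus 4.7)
The plan is to obtain both parts by transferring primitivity through the contraction-localization bijection \eqref{primes.localization} between $\Spec_0 \Abq$ and $\Spec \Tbq$.

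For part~(a), I would observe that by Theorem~\ref{maxTq}(a) the ideal $I(\Gamma_Z,\rho)$ is maximal in $\Tbq$, hence primitive, and that Theorem~\ref{IcapAq} identifies its contraction to $\Abq$ as $I_A(\Gamma_Z,\rho)$.  What remains is to show that primitivity passes from $\Spec \Tbq$ to $\Spec_0 \Abq$ under the bijection.  The cleanest route is to invoke the Goodearl--Letzter $H$-stratification theorem \cite[Theorem II.8.4, Corollary II.8.5]{BG} applied to the standard torus action of $H = (\kx)^n$ on $\Abq$: within each $H$-stratum, the primitive ideals are precisely those maximal in the stratum.  The set $\Spec_0 \Abq$ is the $0$-stratum (corresponding to the $H$-prime $\{0\}$), and since \eqref{primes.localization} is inclusion-preserving, ideals maximal in $\Spec_0 \Abq$ correspond bijectively to maximal ideals of $\Tbq$.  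Running the same stratification argument on $\Tbq$ (where $\{0\}$ is the only $H$-prime, so $\Spec \Tbq$ is itself a single stratum, exactly as in the proof of Theorem~\ref{maxTq}(b)) shows that the maximal ideals of $\Tbq$ coincide with its primitive ideals.  Combining these bijections gives $\Prim_0 \Abq \longleftrightarrow \Prim \Tbq$, whence $I_A(\Gamma_Z,\rho)$ is primitive in $\Abq$.

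For part~(b), I would assume $\KK$ algebraically closed and take $P \in \Prim \Abq$ containing no monomials.  Then $x_i \notin P$ for every $i$, and since each $x_i$ is normal in $\Abq$ with $P$ prime, a straightforward induction (using that $x_i$ normal and $x_i y \in P$ force $y \in P$ once $x_i \notin P$) shows $x^\alpha \notin P$ for all $\alpha \in \Gamma^+$, so $P \cap x_\bullet = \varnothing$ and $P \in \Prim_0 \Abq$.  By the correspondence established in part~(a), $P \Tbq$ is a primitive ideal of $\Tbq$, and Theorem~\ref{maxTq}(b) identifies $P\Tbq = I(\Gamma_Z,\rho)$ for some $\rho \in \Hom(\Gamma_Z,\kx)$.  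Theorem~\ref{IcapAq} then yields $P = P\Tbq \cap \Abq = I(\Gamma_Z,\rho) \cap \Abq = I_A(\Gamma_Z,\rho)$.

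The main obstacle is the transfer of primitivity through \eqref{primes.localization}, which I propose to handle by the stratification machinery.  A more elementary alternative would be to extract a simple faithful $\bigl(\Abq/I_A(\Gamma_Z,\rho)\bigr)$-module directly from a simple faithful module for the simple ring $\Tbq/I(\Gamma_Z,\rho)$; however, the restriction to $\Abq$ of a simple $\Tbq$-module need not remain simple, so this direct route appears more delicate than appealing to the stratification theorem.
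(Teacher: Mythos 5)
Your part~(b) is essentially the paper's argument: over $\KK = \KKbar$ one quotes \cite[Corollary II.8.5]{BG} to see that a primitive ideal containing no monomials is maximal in $\Spec_0 \Abq$, localizes to a maximal ideal of $\Tbq$, applies Theorem~\ref{maxTq}(b), and contracts via Theorem~\ref{IcapAq}. The problem is part~(a). Theorem~\ref{prim0Aq}(a) is asserted for an \emph{arbitrary} base field, but the stratification results you lean on (``within each $H$-stratum the primitive ideals are precisely the primes maximal in the stratum'') are not available in that generality: the rational-torus stratification framework requires $\KK$ infinite at the very least (the paper identifies $\Spec_0 \Abq$ and $\Prim_0 \Abq$ with $H$-strata only ``in case $\KK$ is infinite'', see \S\ref{xbull}), and the paper itself invokes \cite[Corollary II.8.5]{BG} only under the hypothesis $\KK = \KKbar$ (in part~(b) and in Theorem~\ref{maxTq}(b)). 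So your proof of (a) only establishes the statement under hypotheses on $\KK$ that (a) does not assume; over a finite field, for instance, the argument does not get started.

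The paper's proof of (a) avoids the stratification machinery entirely and works over every $\KK$: by Theorems~\ref{maxTq}(a) and \ref{IcapAq}, $I_A(\Gamma_Z,\rho)$ is a maximal element of $\Spec_0 \Abq$, so every prime properly containing it contains some $x_i$ and hence the normal element $x_1 \cdots x_n$; since $I_A(\Gamma_Z,\rho)$ contains no monomials, the intersection of the primes properly containing it contains $I_A(\Gamma_Z,\rho) + \langle x_1 \cdots x_n \rangle \supsetneqq I_A(\Gamma_Z,\rho)$. Thus $I_A(\Gamma_Z,\rho)$ is locally closed in $\Spec \Abq$, and primitivity follows from the implication ``locally closed $\Rightarrow$ primitive'', valid for $\Abq$ over any field via the Nullstellensatz \cite[Lemma II.7.15 and Corollary II.7.18]{BG}. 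Note that only the direction you need (maximal in the $0$-stratum $\Rightarrow$ primitive) has to be replaced by this local-closedness argument; the rest of your outline, including the reduction in (b) from ``no monomial generators $x_i$'' to disjointness from $x_\bullet$ via normality, is fine.
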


\begin{proof}
(a) By Theorems \ref{maxTq} and \ref{IcapAq}, $I(\Gamma_Z,\rho)$ is a maximal ideal of $\Tbq$ and $I_A(\Gamma_Z,\rho)$ localizes to $I(\Gamma_Z,\rho)$. Consequently, $I_A(\Gamma_Z,\rho)$ is a maximal element of $\Spec_0 \Abq$, and so any prime ideal of $\Abq$ that strictly contains $I_A(\Gamma_Z,\rho)$ must contain some $x_i$. Thus
$$\bigcap \{ P \in \Spec \Abq \mid P \supsetneqq I_A(\Gamma_Z,\rho) \} \supseteq I_A(\Gamma_Z,\rho) + \langle x_1 \cdots x_n \rangle \supsetneqq I_A(\Gamma_Z,\rho),$$
which shows that $I_A(\Gamma_Z,\rho)$ is locally closed in $\Spec \Abq$. By \cite[Lemma II.7.15 and Corollary II.7.18]{BG}, $I_A(\Gamma_Z,\rho) \in \Prim \Abq$.

(b) Given the assumption $\KK = \KKbar$, the set $\Prim_0 \Abq$ consists precisely of the maximal elements of $\Spec_0 \Abq$ \cite[Corollary II.8.5]{BG}. Thus, if $P$ is a primitive ideal of $\Abq$ that contains no monomials, then $P$ is a maximal element of $\Spec_0 \Abq$, whence $P$ localizes to a maximal ideal $P\Tbq$ of $\Tbq$. By Theorem \ref{maxTq}, $P \Tbq = I(\Gamma_Z,\rho)$ for some $\rho \in \Hom(\Gamma_Z,\kx)$, and therefore $P = P \Tbq \cap \Abq = I_A(\Gamma_Z,\rho)$ due to Theorem \ref{IcapAq}.
\end{proof}

\begin{theorem}  \label{prime.IALrho}
Let $L$ be a sublattice of $\Gamma_Z$ and  $\rho \in \Hom(L,\kx)$.

{\rm(a)} If $\Gamma_Z/L$ is torsionfree, then $I_A(L,\rho)$ is a prime ideal of $\Abq$. If, further, $\Gamma/L$ is torsionfree, then $I_A(L,\rho)$ is completely prime.

{\rm(b)} If $\KK$ is algebraically closed, then $I_A(L,\rho)$ is a prime ideal of $\Abq$ if and only if $\Gamma_Z/L$ is torsionfree. 

{\rm(c)} If $\KK$ is algebraically closed and $\Gamma/\Gamma_Z$ is torsionfree, then all prime binomial ideals of $\Abq$ that contain no monomials are completely prime.
\end{theorem}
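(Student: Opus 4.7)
The plan is to port the analogous statements about $I(L,\rho) \vartriangleleft \Tbq$ from Theorem \ref{prime.binomTq} across the localization correspondence of \S\ref{xbull}, using the identity $I_A(L,\rho) = I(L,\rho) \cap \Abq$ established in Theorem \ref{IcapAq}. The key structural input is the bijection \eqref{primes.localization} between $\Spec_0 \Abq$ and $\Spec \Tbq$, together with the observation that $I_A(L,\rho)$ contains no monomials: since $I(L,\rho)$ is a proper ideal of $\Tbq$ it contains no units, and every monomial $x^\alpha$ is a unit of $\Tbq$, so no $x^\alpha$ lies in $I_A(L,\rho) \subseteq I(L,\rho)$.

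For part (a), assume $\Gamma_Z/L$ is torsionfree. Then Theorem \ref{prime.binomTq}(a) gives that $I(L,\rho)$ is prime in $\Tbq$, so by the bijection \eqref{primes.localization} its contraction $I_A(L,\rho)$ is prime in $\Abq$. If in addition $\Gamma/L$ is torsionfree, the same theorem gives that $\Tbq/I(L,\rho)$ is a domain; the natural embedding $\Abq/I_A(L,\rho) \hookrightarrow \Tbq/I(L,\rho)$ (injective by Theorem \ref{IcapAq}) then forces $\Abq/I_A(L,\rho)$ to be a domain as well, so $I_A(L,\rho)$ is completely prime.

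For part (b), the reverse direction is (a). For the forward direction, assume $\KK$ is algebraically closed and $I_A(L,\rho)$ is prime in $\Abq$. Since $I_A(L,\rho)$ contains no monomials by the observation above, $I_A(L,\rho) \in \Spec_0 \Abq$, and so by \eqref{primes.localization} its extension $I_A(L,\rho) \Tbq = I(L,\rho)$ is prime in $\Tbq$. Theorem \ref{prime.binomTq}(b) then yields that $\Gamma_Z/L$ is torsionfree.

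For part (c), let $P$ be any prime binomial ideal of $\Abq$ containing no monomials. By Corollary \ref{xbull.closure}, $P = I_A(L,\rho)$ for some sublattice $L \le \Gamma_Z$ and $\rho \in \Hom(L,\kx)$. Part (b) gives that $\Gamma_Z/L$ is torsionfree; combined with the hypothesis that $\Gamma/\Gamma_Z$ is torsionfree, the short exact sequence $0 \to \Gamma_Z/L \to \Gamma/L \to \Gamma/\Gamma_Z \to 0$ forces $\Gamma/L$ to be torsionfree. Applying part (a) then shows $P = I_A(L,\rho)$ is completely prime. The only potential friction point is confirming that the Ore-localization spec bijection behaves as needed with respect to primality and complete primeness, but both follow immediately from \eqref{primes.localization} and the injection $\Abq/I_A(L,\rho) \hookrightarrow \Tbq/I(L,\rho)$, so no substantial obstacle is expected.
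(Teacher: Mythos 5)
Your proposal is correct and follows essentially the same route as the paper: parts (a) and (b) are obtained by combining Theorem \ref{prime.binomTq} with Theorem \ref{IcapAq} via the contraction/extension bijection \eqref{primes.localization} (the paper leaves these steps implicit, you spell them out, including the standard fact that every ideal of the Ore localization $\Tbq$ is the extension of its contraction), and part (c) uses Corollary \ref{xbull.closure} to identify $P$ with some $I_A(L,\rho)$ and then the torsionfreeness transfer $\Gamma_Z/L,\ \Gamma/\Gamma_Z$ torsionfree $\Rightarrow \Gamma/L$ torsionfree, exactly as in the paper.
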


\begin{proof}
(a)(b) Theorems \ref{prime.binomTq} and \ref{IcapAq}.

(c) If $P$ is a prime binomial ideal of $\Abq$ that contains no monomials, then $P \Tbq$ is a prime binomial ideal of $\Tbq$ that contracts to $P$. By Theorems \ref{binom.Tq} and \ref{IcapAq}, $P \Tbq = I(L,\rho)$ for some sublattice $L$ of $\Gamma_Z$ and some $\rho \in \Hom(L,\kx)$ and so $P = I_A(L,\rho)$. Part (b) (or just Theorem \ref{prime.binomTq}(b)) implies that $\Gamma_Z/L$ is torsionfree. Since $\Gamma/\Gamma_Z$ is torsionfree, so is $\Gamma/L$, and therefore $P$ is completely prime by part (a).
\end{proof}

\begin{theorem}  \label{rad.binomAq}
Let $L$ be a sublattice of $\Gamma_Z$ and $\rho \in \Hom(L,\kx)$.

{\rm(a)} If $\charr\KK = 0$, then $\sqrt{I_A(L,\rho)} = I_A(L,\rho)$.

{\rm(b)} Assume that $\KK$ is perfect of characteristic $p > 0$, and let $L'_p/L$ be the $p$-torsion subgroup of $\Gamma_Z/L$. There is a unique map $\rho'$ in $\Hom(L'_p, \kx)$ extending $\rho$, and $\sqrt{I_A(L,\rho)} = I_A(L'_p, \rho')$.

{\rm(c)} Assume that $\KK$ is algebraically closed. Let $L'/L$ be the torsion subgroup of $\Gamma_Z/L$, and let $\rho'_1,\dots,\rho'_m \in \Hom(L',\kx)$ be the distinct extensions of $\rho$ to $L'$. The prime ideals of $\Abq$ minimal over $I_A(L,\rho)$ are the $I_A(L', \rho'_j)$ for $j \in [ 1,m ]$. All of these prime ideals have the same height, equal to $\rank L$.
\end{theorem}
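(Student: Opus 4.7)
The plan is to transfer each statement from $\Tbq$ to $\Abq$ via the localization $\Abq \hookrightarrow \Tbq$, using the identity $I_A(L,\rho) = I(L,\rho) \cap \Abq$ supplied by Theorem \ref{IcapAq}. The crucial observation is that every $x_i$ is regular modulo $I_A(L,\rho)$ in $\Abq$: if $a \in \Abq$ satisfies $x_i a \in I_A(L,\rho) \subseteq I(L,\rho)$, then since $x_i$ is a unit in $\Tbq$ we obtain $a \in I(L,\rho) \cap \Abq = I_A(L,\rho)$, and the right-sided argument is symmetric. Consequently no associated prime of $\Abq/I_A(L,\rho)$ contains any $x_i$, so every prime minimal over $I_A(L,\rho)$ lies in $\Spec_0 \Abq$. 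Under the bijection \eqref{primes.localization}, such minimal primes correspond precisely to the primes of $\Tbq$ minimal over $I(L,\rho)$.

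For parts (a) and (b), combine the above with the fact that contraction commutes with arbitrary intersections of ideals. Writing $\mathcal{P}$ for the set of primes of $\Tbq$ minimal over $I(L,\rho)$, we have
$$
\sqrt{I_A(L,\rho)} = \bigcap_{P\in\mathcal{P}} (P\cap\Abq) = \Bigl(\bigcap_{P\in\mathcal{P}} P\Bigr) \cap \Abq = \sqrt{I(L,\rho)} \cap \Abq.
$$
In characteristic zero, Theorem \ref{rad.binom}(a) gives $\sqrt{I(L,\rho)} = I(L,\rho)$, so the right-hand side equals $I_A(L,\rho)$, yielding (a). When $\charr \KK = p > 0$ and $\KK$ is perfect, Theorem \ref{rad.binom}(b) produces the unique extension $\rho' \in \Hom(L'_p, \kx)$ of $\rho$ and identifies $\sqrt{I(L,\rho)} = I(L'_p, \rho')$; Theorem \ref{IcapAq} then gives $\sqrt{I_A(L,\rho)} = I(L'_p, \rho') \cap \Abq = I_A(L'_p, \rho')$, which is (b).

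For (c), invoke Theorem \ref{minprimes.binom}, which identifies the primes of $\Tbq$ minimal over $I(L,\rho)$ as the $I(L', \rho'_j)$ for $j \in [1,m]$, all of height $\rank L$. By the first paragraph, the primes of $\Abq$ minimal over $I_A(L,\rho)$ are precisely the contractions $I(L', \rho'_j) \cap \Abq = I_A(L', \rho'_j)$. For the height claim, any chain of primes of $\Abq$ descending from a prime in $\Spec_0 \Abq$ remains in $\Spec_0 \Abq$ (a descending prime cannot meet $x_\bullet$), so extension and contraction give a length-preserving bijection between such chains and those in $\Spec \Tbq$. Hence each $I_A(L', \rho'_j)$ has the same height as $I(L', \rho'_j)$, namely $\rank L$.

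The main obstacle is the regularity claim in the first paragraph: without knowing that the minimal primes over $I_A(L,\rho)$ avoid $x_\bullet$, intersecting only over the primes in $\Spec_0 \Abq$ could fall short of the full radical, and the enumeration in (c) would only capture those minimal primes that happen to lie in $\Spec_0 \Abq$. Once that point is secured, Theorems \ref{rad.binom} and \ref{minprimes.binom} transfer mechanically through Theorem \ref{IcapAq} and the localization bijection \eqref{primes.localization}.
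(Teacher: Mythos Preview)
Your proof is correct and follows essentially the same strategy as the paper: transfer everything from $\Tbq$ to $\Abq$ via Theorem \ref{IcapAq} and the localization bijection \eqref{primes.localization}. The one point of departure is in how the key identity $\sqrt{I_A(L,\rho)} = \sqrt{I(L,\rho)} \cap \Abq$ is obtained. You argue that each $x_i$ is regular modulo $I_A(L,\rho)$, hence lies outside every associated prime (and therefore every minimal prime) of $\Abq/I_A(L,\rho)$, forcing all minimal primes into $\Spec_0 \Abq$; the paper instead uses a direct nilpotence argument---noetherianness gives $\bigl(\sqrt{I(L,\rho)}\bigr)^k \subseteq I(L,\rho)$ for some $k$, whence $\bigl(\sqrt{I(L,\rho)} \cap \Abq\bigr)^k \subseteq I_A(L,\rho)$, and the reverse inclusion is immediate since contractions of semiprime ideals are semiprime. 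Your route makes the structure of part (c) especially transparent (the minimal primes are located \emph{a priori}), while the paper's is slightly quicker for (a) and (b) since it bypasses the associated-prime machinery and then, for (c), uses the already-established radical identity to conclude that the contracted primes $I_A(L',\rho'_j)$ exhaust the minimal primes.
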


\begin{proof} (a)(b) Set $J := \sqrt{I(L,\rho)}$, which equals $I(L,\rho)$ in case (a) and $I(L'_p, \rho')$ in case (b), by Theorem \ref{rad.binom}. Observe that $J \cap \Abq$ is a radical ideal of $\Abq$ containing $I_A(L,\rho)$. Now $J^k \subseteq I(L,\rho)$ for some $k \in \Znn$, whence $(J \cap \Abq)^k \subseteq I_A(L,\rho)$, and consequently $J \cap \Abq = \sqrt{I_A(L,\rho)}$. Theorem \ref{IcapAq} therefore implies that $\sqrt{I_A(L,\rho)}$ equals $I_A(L,\rho)$ in case (a) and $I_A(L'_p, \rho')$ in case (b).

(c) By Theorem \ref{minprimes.binom}, the prime ideals of $\Tbq$ minimal over $I(L,\rho)$ are the ideals $Q_j := I(L', \rho'_j)$ for $j \in [ 1,m ]$. It follows from \eqref{primes.localization} and Theorem \ref{IcapAq} that the ideals $P_j := Q_j \cap \Abq = I_A(L', \rho'_j)$ for $j \in [ 1,m ]$ are prime ideals of $\Abq$ minimal over $I_A(L,\rho)$. As in the proof of (a),
$$\sqrt{I_A(L,\rho)} = \sqrt{I(L,\rho)} \cap \Abq = Q_1 \cap \cdots \cap Q_m \cap \Abq = P_1 \cap \cdots \cap P_m \,.$$
Consequently, any prime ideal $P$ of $\Abq$ minimal over $I_A(L,\rho)$ must contain some $P_j$, whence $P = P_j$ by minimality. Finally,
$$\hgt I_A(L', \rho'_j) = \hgt I(L', \rho'_j) = \rank L \qquad \forall\; j \in [1,m]$$
by \eqref{primes.localization}, Corollary \ref{hts.Tq}, and Theorem \ref{minprimes.binom}.
\end{proof}

\sectionnew{General binomial ideals in $\Abq$}  \label{genbinom.Aq}
General binomial ideals of $\Abq$ -- ones that may contain monomials -- can be approached by way of special quotients of $\Abq$, since $\Abq$ modulo the ideal generated by a subset of the generators $x_i$ is again a quantum affine space. This allows results from Section \ref{binom.Aq.w/o.monom} to be applied to prime binomial ideals and, provided $\KK$ is algebraically closed, to primitive ideals and to primes minimal over arbitrary binomial ideals. Finally, we prove that if $\KK$ is perfect, then the radical of any binomial ideal of $\Abq$ is binomial.

\subsection{}
For any subset $J \subseteq [ 1,n ]$, define the following:
\begin{align*}
\Tbq[J] &:= \KK \langle x_j^{\pm1} \mid j \in J \rangle \subseteq \Tbq  &\Abq[J] &:= \KK \langle x_j \mid j \in J \rangle  \subseteq \Abq \\
\Gamma[J] &:= \{ \alpha \in \Gamma \mid \alpha_i = 0\ \forall\, i \notin J \}  &\Gamma[J]^+ &:= \Gamma[J] \cap \Gamma^+  \\
\Gamma[J]_Z &:= \{ \alpha \in \Gamma[J] \mid x^\alpha \in Z(\Tbq[J]) \} &I_A[\neg J] &:= \langle x_j \mid j \in [1,n] \setminus J \rangle \vartriangleleft \Abq \,. 
\end{align*}
Then $\Abq[J]$ is a quantum affine space $\calO_{\bfq[J]}(\KK^{|J|})$ and $\Tbq[J]$ is the corresponding quantum torus $\calO_{\bfq[J]}((\kx)^{|J|})$, where $\bfq[J]$ is the $J \times J$ submatrix of $\bfq$. Moreover,
$$Z(\Tbq[J]) = \bigoplus_{\alpha \in \Gamma[J]_Z} \KK x^\alpha\,.$$
There is a $\KK$-algebra retraction 
$$\pi_J : \Abq \longrightarrow \Abq[J] \quad \text{such that} \quad \pi_J(x_j) = 
\begin{cases} x_j &(j \in J) \\  0 &(j \notin J) \end{cases}$$
with $\ker \pi_J = I_A[\neg J]$, which induces a canonical isomorphism of $\Abq/I_A[\neg J]$ onto $\Abq[J]$.

\subsection{}
Given $J \subseteq[1,n]$, choose and fix a basis $(\beta_{J,1}, \dots, \beta_{J,r(J)})$ for $\Gamma[J]_Z$ (allowing $r(J) = 0$ in case $\Gamma[J]_Z = 0$), write $\KK\Gamma[J]_Z$ as a Laurent polynomial ring of the form
$$\KK \Gamma[J]_Z = \KK [ z_{J,1}^{\pm1}, \dots, z_{J,r(J)}^{\pm1} ],$$
and set
$$z_{J,\gamma} := z_{J,1}^{m_1} \cdots z_{J,r(J)}^{m_{r(J)}} \qquad \forall\; \gamma = m_1 \beta_{J,1} + \cdots + m_{r(J)} \beta_{J,r(J)} \in \Gamma[J]_Z \,,$$
so that $(z_{J,\gamma} \mid \gamma \in \Gamma[J]_Z )$ is a $\KK$-basis for $\KK \Gamma[J]_Z$ and $z_{J,\gamma} z_{J,\delta} = z_{J,\gamma+\delta}$ for $\gamma, \delta \in \Gamma[J]_Z$. There is a $\KK$-algebra isomorphism
$$\phi_J : \KK \Gamma[J]_Z \overset{\cong}{\longrightarrow} Z(\Tbq[J]) \quad \text{such that} \quad \phi_J(z_{J,i}) = x^{\beta_{J,i}}\ \text{for}\ i \in [ 1,r(J) ].$$
There are scalars $c_J(\gamma) \in \kx$ such that
$$\phi_J(z_{J,\gamma}) = c_J(\gamma) x^\gamma \qquad \forall\, \gamma \in \Gamma[J]_Z \,,$$
and, as in \eqref{d.ab}, it follows from \eqref{d.prod} that
$$d(\alpha, \beta) = c_J(\alpha+\beta) c_J(\alpha)^{-1} c_J(\beta)^{-1} \qquad \forall\, \alpha,\beta \in \Gamma[J]_Z \,.$$

\subsection{}
For any sublattice $L$ of $\Gamma[J]_Z$ and $\rho \in \Hom(L,\kx)$, define
$$I_{J,A}(L,\rho) := I_A[\neg J] + \langle x^{\alpha_+} - c_J(\alpha)^{-1} d(\alpha, \alpha_-)^{-1} \rho(\alpha) x^{\alpha_-} \mid \alpha \in L \rangle \vartriangleleft \Abq \,.$$
We may also express $I_{J,A}(L,\rho)$ as $I_A[\neg J] + I'$ where $I'$ is the ideal of $\Abq[J]$ generated by
$$\{ x^{\alpha_+} - c_J(\alpha)^{-1} d(\alpha, \alpha_-)^{-1} \rho(\alpha) x^{\alpha_-} \mid \alpha \in L \}.$$

\begin{theorem}  \label{primJ.Aq}
{\rm(a)} For any $J \subseteq [ 1,n ]$ and $\rho \in \Hom( \Gamma[J]_Z, \kx)$, the ideal $I_{J,A}(\Gamma[J]_Z,\rho)$ is a primitive ideal of $\Abq$.

{\rm(b)} Assume that $\KK$ is algebraically closed. Then every primitive ideal of $\Abq$ has the form $I_{J,A}(\Gamma[J]_Z,\rho)$
for some $J \subseteq [ 1,n ]$ and $\rho \in \Hom( \Gamma[J]_Z, \kx)$.

In particular, all primitive ideals of $\Abq$ are binomial ideals in this case. 
\end{theorem}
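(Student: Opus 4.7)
The plan is to reduce both parts to Theorem~\ref{prim0Aq} via the retraction $\pi_J \colon \Abq \twoheadrightarrow \Abq[J]$ with $\ker \pi_J = I_A[\neg J]$, which identifies $\Abq/I_A[\neg J]$ with the smaller quantum affine space $\Abq[J]$.

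For part (a), I would first observe directly from the definition that
$$
I_{J,A}(\Gamma[J]_Z, \rho) \;=\; \pi_J^{-1}\bigl(I_A(\Gamma[J]_Z, \rho)\bigr),
$$
where on the right the ideal from Section~\ref{binom.Aq.w/o.monom} is formed inside $\Abq[J]$ using the scalars $c_J$. Since $\Gamma[J]_Z$ is the full central lattice of $\Tbq[J]$, Theorem~\ref{prim0Aq}(a) applied to $\Abq[J]$ tells us that $I_A(\Gamma[J]_Z, \rho)$ is primitive in $\Abq[J]$. Pulling a primitive ideal back along the surjection $\pi_J$ yields a primitive ideal (any simple $\Abq[J]$-module becomes a simple $\Abq$-module via $\pi_J$, and its $\Abq$-annihilator is the preimage of its $\Abq[J]$-annihilator), so $I_{J,A}(\Gamma[J]_Z, \rho)$ is primitive in $\Abq$.

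For part (b), given a primitive ideal $P$ of $\Abq$, I would set $J := \{\, j \in [1,n] \mid x_j \notin P \,\}$, so by construction $I_A[\neg J] \subseteq P$ and $P/I_A[\neg J]$ is a primitive ideal of $\Abq[J]$. The key technical step is to verify that this quotient contains no monomial of $\Abq[J]$: should some $x^\alpha$ with $\alpha \in \Gamma[J]^+$ lie in $P$, then factoring $x^\alpha$ as an ordered product of the $x_j$ with $j \in J$ and using that each $x_j$ is normal in $\Abq$ while $P$ is prime would force some $x_j$ with $j \in J$ into $P$, contradicting the definition of $J$. Theorem~\ref{prim0Aq}(b), which is where the algebraic closure of $\KK$ is used, then supplies $\rho \in \Hom(\Gamma[J]_Z, \kx)$ with $P/I_A[\neg J] = I_A(\Gamma[J]_Z, \rho)$, and pulling back along $\pi_J$ gives $P = I_{J,A}(\Gamma[J]_Z, \rho)$.

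The closing binomiality assertion is then immediate from the explicit binomial generators appearing in the definition of $I_{J,A}(\Gamma[J]_Z, \rho)$. No serious obstacle is anticipated: the substantive content has already been done in Theorem~\ref{prim0Aq}, and the present argument reduces to matching the two ideal definitions on either side of $\pi_J$ together with the short normality/primeness argument eliminating monomials from the image of $P$ in $\Abq[J]$.
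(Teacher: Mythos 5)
Your proposal is correct and follows essentially the same route as the paper: the paper's proof simply invokes the stratification fact that the primitive ideals of $\Abq$ are exactly the ideals $I_A[\neg J]+Q$ with $Q \in \Prim_0 \Abq[J]$ and then cites Theorem \ref{prim0Aq}, while you spell out the same reduction through $\pi_J$ (pulling primitives back along the surjection for (a), and using normality of the $x_j$ plus primeness to see $\pi_J(P)$ contains no monomials for (b)). The extra details you supply are exactly the ones the paper leaves implicit, so there is no substantive difference.
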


\begin{proof} 
The primitive ideals of $\Abq$ are the ideals $I_A[\neg J] + Q$ for $J \subseteq [ 1,n ]$ and $Q \in \Prim_0 \Abq[J]$. The theorem thus follows from Theorem \ref{prim0Aq}.
\end{proof} 

\begin{theorem}  \label{prime.IJA}
Let $L$ be a sublattice of $\Gamma[J]_Z$ for some $J\subseteq [ 1,n ]$ and $\rho \in \Hom(L,\kx)$.

{\rm(a)} If $\Gamma[J]_Z/L$ is torsionfree, then $I_{J,A}(L,\rho)$ is a prime ideal of $\Abq$. If, further, $\Gamma[J]/L$ is torsionfree, then $I_{J,A}(L,\rho)$ is completely prime.

{\rm(b)} If $\KK$ is algebraically closed, then $I_{J,A}(L,\rho)$ is a prime ideal of $\Abq$ if and only if $\Gamma[J]_Z/L$ is torsionfree.

{\rm(c)} If $\KK$ is algebraically closed and $\Gamma[J]/\Gamma[J]_Z$ is torsionfree for all $J \subseteq [ 1,n ]$, then all prime binomial ideals of $\Abq$ are completely prime.
\end{theorem}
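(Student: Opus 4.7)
The strategy is to transfer everything to the subalgebra $\Abq[J]$ via the retraction $\pi_J : \Abq \rightarrow \Abq[J]$, whose kernel is exactly $I_A[\neg J]$. First I would verify that $I_{J,A}(L,\rho) \supseteq I_A[\neg J]$ and that its image under $\pi_J$ coincides with the ideal $I_A(L,\rho)$ formed inside $\Abq[J]$ with respect to the data $(\bfq[J], \phi_J, c_J)$ attached to $J$. This is immediate from the definitions, since the generators of $I_{J,A}(L,\rho)$ beyond those of $I_A[\neg J]$ are precisely the generators of the $I_A(L,\rho)$ construction computed in $\Abq[J]$. Consequently there is a canonical $\KK$-algebra isomorphism
$$\Abq / I_{J,A}(L,\rho) \;\cong\; \Abq[J] \big/ I_A(L,\rho),$$
where the right-hand $I_A$ is formed inside $\Abq[J]$.

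Parts (a) and (b) then follow at once from Theorem \ref{prime.IALrho}(a),(b) applied to the quantum affine space $\Abq[J]$: the torsionfreeness hypotheses on $\Gamma[J]_Z/L$ and $\Gamma[J]/L$ are exactly those required there, and the displayed isomorphism converts primeness (respectively, complete primeness) in $\Abq[J]$ into the same property in $\Abq$.

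For part (c), I would let $P$ be an arbitrary prime binomial ideal of $\Abq$ and set $J := \{\, i \in [1,n] \mid x_i \notin P \,\}$. Since each $x_i$ is normal in $\Abq$ and $P$ is prime, $P \supseteq \langle x_i \mid i \notin J \rangle = I_A[\neg J]$. The quotient $\bar P := P / I_A[\neg J]$ is then a prime ideal of $\Abq[J]$ containing none of the $x_j$ for $j \in J$. Primeness of $\bar P$ together with normality of each $x_j$ inside $\Abq[J]$ forces $\bar P$ to contain no nonzero monomial at all. Now $\bar P$ is generated as an ideal of $\Abq[J]$ by the images $\pi_J(f_s)$ of a family of binomial generators $\{f_s\}$ of $P$, and each such image is either $0$, a monomial, or a binomial; a nonzero monomial among these generators would itself lie in $\bar P$, contradicting the previous sentence. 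Hence $\bar P$ is a prime binomial ideal of $\Abq[J]$ containing no monomials, and Theorem \ref{prime.IALrho}(c) applied inside $\Abq[J]$ (whose hypothesis is precisely the torsionfreeness of $\Gamma[J]/\Gamma[J]_Z$ assumed here) yields that $\bar P$ is completely prime. Therefore $\Abq/P \cong \Abq[J]/\bar P$ is a domain, and $P$ is completely prime.

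The main obstacle is this last descent step: although $\pi_J$ can send a binomial to a monomial, one must recognise that the particular prime binomial ideal $P$ descends to a binomial ideal in $\Abq[J]$ that is moreover free of monomials, so that the monomial-free theory of Section \ref{binom.Aq.w/o.monom} can be invoked. Once this is established, the rest of the argument reduces transparently to the monomial-free case already handled.
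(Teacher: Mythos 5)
Your proposal is correct and follows essentially the same route as the paper: the paper's (very terse) proof likewise reduces everything to the quantum affine space $\Abq[J]$ by noting that primes of $\Abq$ are exactly the ideals $I_A[\neg J] + Q$ with $Q \in \Spec_0 \Abq[J]$, and then invokes Theorem \ref{prime.IALrho}. Your extra details — the isomorphism $\Abq/I_{J,A}(L,\rho) \cong \Abq[J]/I_A(L,\rho)$ and, for part (c), the observation that $\pi_J(P)$ is a prime binomial ideal of $\Abq[J]$ containing no monomials — are exactly the steps the paper leaves implicit (the latter reappears in its proof of Corollary \ref{prime.binom.Aq}), and they are argued correctly.
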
 

\begin{proof}
The prime ideals of $\Abq$ are the ideals $I_A[\neg J] + Q$ for $J \subseteq [ 1,n ]$ and $Q \in \Spec_0 \Abq[J]$. Apply
Theorem \ref{prime.IALrho}.
\end{proof}

\begin{corollary}  \label{prime.binom.Aq}
Assume that $\KK$ is algebraically closed, let $P$ be a binomial ideal in $\Abq$, and set $J := \{ j \in [ 1,n ] \mid x_j \notin P \}$. Then $P$ is prime if and only if $P = I_{J,A}(L,\rho)$ for some sublattice $L$ of $\Gamma[J]_Z$ such that $\Gamma[J]_Z/L$ is torsionfree and some $\rho \in \Hom(L,\kx)$.
\end{corollary}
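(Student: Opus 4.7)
My strategy is to reduce the statement to the ``no-monomial'' case already handled in Section \ref{binom.Aq.w/o.monom}, by passing through the retraction $\pi_J : \Abq \to \Abq[J]$ with kernel $I_A[\neg J]$, and then to assemble the two implications from Theorem \ref{prime.IJA}(a), Corollary \ref{xbull.closure}, and Theorem \ref{prime.IALrho}(b).

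For the reverse implication, assume $P = I_{J,A}(L,\rho)$ with $\Gamma[J]_Z/L$ torsionfree.  Theorem \ref{prime.IJA}(a) immediately gives that $P$ is prime.  I still need to confirm that the index set $\{j \in [1,n] \mid x_j \notin P\}$ recovered from $P$ equals the given $J$.  For $j \notin J$ this is clear since $x_j \in I_A[\neg J] \subseteq P$.  For $j \in J$ I use that $\pi_J(P) = I_A(L,\rho) = I(L,\rho) \cap \Abq[J]$ by Theorem \ref{IcapAq}, and since $I(L,\rho)$ is a proper ideal of $\Tbq[J]$ it is disjoint from the Ore set $x_\bullet$ of $\Abq[J]$; hence $x_j \notin \pi_J(P)$, and therefore $x_j \notin P$.

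For the forward implication, let $P$ be a prime binomial ideal of $\Abq$.  Because $x_j \in P$ for every $j \notin J$, we have $I_A[\neg J] \subseteq P$, so $\bar P := \pi_J(P)$ is a well-defined ideal of $\Abq[J]$.  Under $\pi_J$ any binomial $\lambda x^\alpha + \mu x^\beta$ of $\Abq$ becomes $0$, a monomial of $\Abq[J]$, or a binomial of $\Abq[J]$; in either of the latter two cases it is a binomial of $\Abq[J]$ in the sense of this paper, so $\bar P$ is a binomial ideal of $\Abq[J]$.  Primeness transfers because $\Abq[J]/\bar P \cong \Abq/P$.  The next point, which I expect to be the main place requiring care, is that $\bar P$ contains no monomials of $\Abq[J]$: if $x^\alpha \in \bar P$ for some $\alpha \in \Gamma[J]^+$, then $x^\alpha \in P$; but $x^\alpha$ is a scalar multiple of an ordered product of the $x_j$ for $j \in J$, so primeness of $P$ forces some $x_j$ with $j \in J$ to lie in $P$, contradicting the definition of $J$.

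With $\bar P$ now a prime binomial ideal of $\Abq[J]$ containing no monomials, Corollary \ref{xbull.closure} (applied inside $\Abq[J]$) produces unique $L \le \Gamma[J]_Z$ and $\rho \in \Hom(L,\kx)$ with $\bar P = I_A(L,\rho)$, and Theorem \ref{prime.IALrho}(b) (invoking algebraic closure of $\KK$) forces $\Gamma[J]_Z/L$ to be torsionfree.  The canonical binomial generators $x^{\alpha_+} - c_J(\alpha)^{-1} d(\alpha,\alpha_-)^{-1}\rho(\alpha) x^{\alpha_-}$ of $I_A(L,\rho)$ in $\Abq[J]$ have exponents supported in $J$, so they lift verbatim to elements of $\Abq$; combining these lifts with the kernel $I_A[\neg J]$ yields $P = \pi_J^{-1}(\bar P) = I_A[\neg J] + I_A(L,\rho) = I_{J,A}(L,\rho)$, completing the proof.
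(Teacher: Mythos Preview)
Your proof is correct and follows essentially the same route as the paper: pass to $\Abq[J]$ via the retraction $\pi_J$, observe that $\pi_J(P)$ is a prime binomial ideal containing no monomials, then apply Corollary~\ref{xbull.closure} and the torsionfreeness criterion (you invoke Theorem~\ref{prime.IALrho}(b) directly, the paper cites Theorem~\ref{prime.IJA}(b), which is the same statement transported to $\Abq[J]$). Your added verification in the reverse direction---that the set $\{j \mid x_j \notin I_{J,A}(L,\rho)\}$ really equals $J$---is not logically required given how the corollary is phrased (since $J$ is defined from $P$ in the hypothesis), but it is a correct and reassuring consistency check.
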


\begin{proof} $(\Longleftarrow)$: Theorem \ref{prime.IJA}(a).

$(\Longrightarrow)$: We can write $P = I_A[\neg J] + Q$ where $Q := \pi_J(P) \vartriangleleft \Abq[J]$. Observe that $Q$ is a binomial ideal and a prime ideal of $\Abq[J]$. Since the $x_i$ are normal elements of $\Abq$, we see that $x^\alpha \notin P$ for all $\alpha \in \Gamma[J]$, and so $x^\alpha \notin Q$ for such $\alpha$.

Corollary \ref{xbull.closure} now implies that $P = I_{J,A}(L,\rho)$ for some sublattice $L$ of $\Gamma[J]_Z$ and some $\rho \in \Hom(L,\kx)$, after which Theorem \ref{prime.IJA}(b) implies that $\Gamma[J]_Z/L$ is torsionfree.
\end{proof}

For $J' \subseteq [1,n]$, set
$$
\Spec_{J'} \Abq := \bigl\{ P \in \Spec \Abq \bigm| P \cap \{x_1,\dots,x_n\} = \{x_j \mid j \in J' \} \bigr\}.
$$
These sets partition $\Spec \Abq$.  The elements $x_j$ for $j \in J := [1,n] \setminus J'$ are normal in $\Abq$ and hence are regular modulo any $P \in \Spec_{J'} \Abq$.  It follows that all primes in $\Spec_{J'} \Abq$ are disjoint from the set
$$
x_{J\bullet} := \text{the multiplicative subset of}\; \Abq\; \text{generated by}\; \{ x_j \mid j \in J \},
$$
and that the elements of $x_{J\bullet}$ are regular modulo any such prime.

\begin{lemma}  \label{piJP.min}
Let $I$ be an ideal of $\Abq$, let $J \subseteq [1,n]$ and $J' := [1,n] \setminus J$, and let $P \in \Spec_{J'} \Abq$.

{\rm(a)} $\pi_J(P)$ is a prime ideal of $\Abq[J]$ disjoint from $x_{J\bullet}$.

{\rm(b)}  If $\pi_J(P)$ is minimal over $(\pi_J(I) : x_{J\bullet})$, then $\pi_J(P)$ is also minimal over $\pi_J(I)$.  Moreover, $P$ is minimal over $I + I_A[\neg J]$ and over $(I + I_A[\neg J] : x_{J\bullet})$.

{\rm(c)}  If $P$ is minimal over $I$, then $\pi_J(P)$ is minimal over $\pi_J(I)$ and minimal over $(\pi_J(I) : x_{J\bullet})$.
\end{lemma}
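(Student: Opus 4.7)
The overall strategy is to use the fact that $\ker \pi_J = I_A[\neg J]$ is contained in $P$ (since $P \in \Spec_{J'} \Abq$ with $J' = [1,n]\setminus J$), so $\pi_J$ induces an inclusion-preserving bijection between the primes of $\Abq$ that contain $I_A[\neg J]$ and the primes of $\Abq[J]$. The second recurring tool will be a noncommutative ``colon lemma'': if $K$ is any ideal of $\Abq[J]$ (respectively $\Abq$) and $Q$ is a prime disjoint from $x_{J\bullet}$ and containing $K$, then $(K:x_{J\bullet}) \subseteq Q$. I will establish this first, and it is the only non-formal input.

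The colon lemma follows from normality of the $x_j$ for $j\in J$, which are normal in $\Abq$ and hence in $\Abq[J]$, making every element $s \in x_{J\bullet}$ normal. Given $r \in (K:x_{J\bullet})$, we pick $s\in x_{J\bullet}$ with $rs \in K \subseteq Q$; normality of $s$ gives $rsR = r(sR) = r(Rs) = rRs \subseteq Q$, hence $RrR \cdot RsR \subseteq Q$, and primeness of $Q$ forces $r \in Q$ or $s \in Q$, the latter being excluded by hypothesis.

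For part (a), $P \supseteq I_A[\neg J]$ gives that $\pi_J(P)$ is a prime of $\Abq[J]$. For $j \in J$ we have $x_j \notin P$ by definition of $\Spec_{J'}\Abq$, so $x_j = \pi_J(x_j) \notin \pi_J(P)$, and primeness of $\pi_J(P)$ then shows it is disjoint from the full multiplicative set $x_{J\bullet}$.

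For part (b), suppose $\pi_J(P)$ is minimal over $(\pi_J(I):x_{J\bullet})$. If $Q'$ is any prime of $\Abq[J]$ with $\pi_J(I) \subseteq Q' \subseteq \pi_J(P)$, then $Q'$ inherits disjointness from $x_{J\bullet}$ from $\pi_J(P)$, so the colon lemma yields $(\pi_J(I):x_{J\bullet}) \subseteq Q' \subseteq \pi_J(P)$, and minimality forces $Q' = \pi_J(P)$. Hence $\pi_J(P)$ is minimal over $\pi_J(I)$. Pulling back along $\pi_J$ transports this minimality to primes of $\Abq$ containing $I_A[\neg J]$, and since any prime $Q$ with $I + I_A[\neg J] \subseteq Q \subseteq P$ contains $I_A[\neg J]$ and satisfies $\pi_J(I) \subseteq \pi_J(Q) \subseteq \pi_J(P)$, we conclude $\pi_J(Q) = \pi_J(P)$ and therefore $Q = P$. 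The minimality of $P$ over $(I + I_A[\neg J]:x_{J\bullet})$ then follows formally: the colon lemma in $\Abq$ (applicable because $P$ is disjoint from $x_{J\bullet}$) shows $(I + I_A[\neg J]:x_{J\bullet}) \subseteq P$, and any prime sandwiched between it and $P$ also contains $I + I_A[\neg J]$, reducing to the case already done.

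For part (c), assume $P$ is minimal over $I$. Given any prime $Q'$ of $\Abq[J]$ with $\pi_J(I) \subseteq Q' \subseteq \pi_J(P)$, let $Q := \pi_J^{-1}(Q')$. Then $Q$ is a prime of $\Abq$ with $I \subseteq I + I_A[\neg J] \subseteq Q \subseteq P + I_A[\neg J] = P$ (the last equality because $I_A[\neg J] \subseteq P$), so minimality of $P$ over $I$ forces $Q = P$, whence $Q' = \pi_J(P)$. This gives minimality of $\pi_J(P)$ over $\pi_J(I)$, and minimality over $(\pi_J(I):x_{J\bullet})$ follows because this colon ideal sits between $\pi_J(I)$ and $\pi_J(P)$ by the colon lemma applied to $\pi_J(P)$, which is disjoint from $x_{J\bullet}$ by part (a).

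The main (mild) obstacle is verifying the colon lemma in the noncommutative setting; every other step is a standard correspondence-theorem manipulation.
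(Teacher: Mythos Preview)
Your proof is correct and follows essentially the same approach as the paper's. The paper phrases your ``colon lemma'' in the equivalent form ``a normal element lying outside a prime $Q$ is regular modulo $Q$'' and invokes it inline rather than stating it separately, and for the second sentence of (b) the paper appeals directly to the identities $\pi_J^{-1}(\pi_J(I)) = I + I_A[\neg J]$ and $\pi_J^{-1}(\pi_J(I):x_{J\bullet}) = (I + I_A[\neg J]:x_{J\bullet})$ rather than re-running the sandwich argument, but the substance is identical.
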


\begin{proof} 
(a) By assumption, $I_A[\neg J] = \langle x_j \mid j \in J' \rangle \subseteq P$, whence $\pi_J(P) \in \Spec \Abq[J]$ and $\pi_J^{-1}(\pi_J(P)) = P$.  As noted above, $P$ is disjoint from $x_{J\bullet}$ and the elements of $x_{J\bullet}$ are regular modulo $P$.  It follows that $\pi_J(P)$ is disjoint from $x_{J\bullet}$ and the elements of $x_{J\bullet}$ are regular modulo $\pi_J(P)$.

(b) First consider a prime $Q$ of $\Abq[J]$ such that $\pi_J(I) \subseteq Q \subseteq \pi_J(P)$.  The elements of $x_{J\bullet}$ are normal in $\Abq[J]$ and lie outside $\pi_J(P)$ by (a), so they must be regular modulo $Q$.  Since $\pi_J(I) \subseteq Q$, it follows that $(\pi_J(I) : x_{J\bullet}) \subseteq Q$, and hence $Q = \pi_J(P)$ by the minimality of $\pi_J(P)$.  Thus $\pi_J(P)$ is minimal over $\pi_J(I)$.

The minimality statements concerning $P$ follow because $\pi_J^{-1}(\pi_J(I)) = I + I_A[\neg J]$ and $\pi_J^{-1} (\pi_J(I) : x_{J\bullet}) = (I + I_A[\neg J] : x_{J\bullet})$.

(c) Suppose $Q \in \Spec \Abq[J]$ with $\pi_J(I) \subseteq Q \subseteq \pi_J(P)$.  Then $\pi_J^{-1}(Q)$ is a prime of $\Abq$ lying between $I$ and $P$, and hence $\pi_J^{-1}(Q) = P$.  It follows that $Q = \pi_J(P)$, proving that $\pi_J(P)$ is minimal over $\pi_J(I)$.

For the final statement, we just need to show that $\pi_J(P)$ contains $(\pi_J(I) : x_{J\bullet})$.  If $a \in (\pi_J(I) : x_{J\bullet})$, then $ya \in \pi_J(I)$ for some $y \in x_{J\bullet}$.  Then $ya \in \pi_J(P)$, whence $a \in \pi_J(P)$ because $y$ is regular modulo $\pi_J(P)$.  This establishes the required inclusion.
\end{proof}

\begin{theorem}  \label{min.over.binom}
Assume that $\KK$ is algebraically closed.
If $I$ is a binomial ideal of $\Abq$, then every prime ideal of $\Abq$ minimal over $I$ is binomial.
\end{theorem}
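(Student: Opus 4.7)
The plan is to reduce to the quantum torus case handled by Theorem \ref{minprimes.binom}, using the stratification machinery of Lemma \ref{piJP.min} together with the extension-contraction bijection \eqref{primes.localization} between $\Spec_0 \Abq[J]$ and $\Spec \Tbq[J]$.

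Let $P$ be a prime of $\Abq$ minimal over $I$. I would set $J := \{j \in [1,n] : x_j \notin P\}$ and $J' := [1,n] \setminus J$, so that $P \in \Spec_{J'} \Abq$ and $P = \pi_J^{-1}(\pi_J(P)) = I_A[\neg J] + \pi_J(P)$ (the latter sum viewing $\pi_J(P) \subseteq \Abq[J] \subseteq \Abq$). Since $I_A[\neg J]$ is a monomial ideal, it suffices to prove that $\pi_J(P)$ is a binomial ideal of $\Abq[J]$: that will exhibit $P$ as generated by monomials together with binomials, and hence binomial. Now by Lemma \ref{piJP.min}(c), $\pi_J(P) \in \Spec_0 \Abq[J]$ is minimal over $\pi_J(I)$; and $\pi_J$ sends each binomial generator of $I$ to $0$, to a monomial, or to a binomial, so $\pi_J(I)$ is a binomial ideal of $\Abq[J]$.

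To conclude, I would extend up to $\Tbq[J]$. The extension $\pi_J(I) \cdot \Tbq[J]$ is a binomial ideal of $\Tbq[J]$, and \eqref{primes.localization} transfers the minimality of $\pi_J(P)$ over $\pi_J(I)$ in $\Abq[J]$ to minimality of $\pi_J(P) \cdot \Tbq[J]$ over $\pi_J(I) \cdot \Tbq[J]$ in $\Tbq[J]$. Applying Theorem \ref{minprimes.binom} (where the algebraic closedness of $\KK$ is consumed) then yields that $\pi_J(P) \cdot \Tbq[J]$ is binomial in $\Tbq[J]$, and Proposition \ref{contract.binom} gives that its contraction $\pi_J(P) = \pi_J(P) \cdot \Tbq[J] \cap \Abq[J]$ is binomial in $\Abq[J]$, finishing the reduction.

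I do not anticipate a truly hard step, since Lemma \ref{piJP.min} has preassembled the key transfer of minimality across the retraction $\pi_J$ and Proposition \ref{contract.binom} has preassembled the transfer of the binomial property across localization-contraction. The main care required is bookkeeping: confirming that minimality actually passes through each of the two reductions, and that $\pi_J$ together with extension and contraction each respect the binomial property of ideals.
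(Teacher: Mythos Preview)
Your proposal is correct and follows essentially the same route as the paper: reduce via $\pi_J$ to a prime in $\Spec_0 \Abq[J]$ minimal over a binomial ideal, pass to the quantum torus $\Tbq[J]$, invoke Theorem~\ref{minprimes.binom}, and contract back. The only cosmetic difference is that the paper packages the localization--contraction step through Corollary~\ref{xbull.closure} and Theorem~\ref{rad.binomAq}(c) (identifying $(\pi_J(I):x_{J\bullet})$ with some $I_A(L,\rho)$ and reading off its minimal primes), whereas you unpack that step and apply Theorem~\ref{minprimes.binom} and Proposition~\ref{contract.binom} directly; the underlying argument is identical.
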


\begin{proof} 
Let $P$ be a prime of $\Abq$ minimal over $I$, and let $J'$ be the unique subset of $[1,n]$ such that $P \in \Spec_{J'} \Abq$.  By Lemma \ref{piJP.min}(a)(c), $\pi_J(P)$ is minimal over $(\pi_J(I) : x_{J\bullet})$, where $J := [1,n] \setminus J'$, and $\pi_J(P)$ is disjoint from $x_{J\bullet}$.  Since $\pi_J(I)$ is disjoint from $x_{J\bullet}$, Corollary \ref{xbull.closure} implies that $(\pi_J(I) : x_{J\bullet}) = I_{\Abq[J]}(L,\rho)$ for some sublattice $L$ of $\Gamma[J]_Z$ and $\rho \in \Hom(L,\kx)$.  Theorem \ref{rad.binomAq}(c) then shows that $\pi_J(P)$ is a binomial ideal of $\Abq[J]$.  Since $P = \pi_J^{-1}(\pi_J(P))$ and $\ker \pi_J = I_A[\neg J]$ is a binomial ideal of $\Abq$, we conclude that $P$ is a binomial ideal of $\Abq$.
\end{proof}

Our remaining goal is to prove that radicals of binomial ideals in $\Abq$ are binomial when $\KK$ is perfect. We use noncommutative versions of several lemmas from \cite{EiSt}.

\begin{lemma}  \label{ES1.3}
If $I$ is a binomial ideal of $\Abq$, then $I \cap \Abq[J]$ is a binomial ideal of $\Abq[J]$ for any $J \subseteq [ 1,n ]$.
\end{lemma}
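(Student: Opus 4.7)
The plan is to apply Lemma~\ref{basis.binom.Aq} after a suitable re-ordering of the variables, then restrict the resulting basis of $I$ to $\Abq[J]$. First I would permute the generators $x_1,\dots,x_n$ so that the indices in $[1,n]\setminus J$ come first and those in $J$ come last, and use the lex order on $\Gamma$ relative to this re-ordering. Since the proof of Lemma~\ref{basis.binom.Aq} only needs that the monoid order on $\Gamma^+$ be a well-ordering compatible with addition (to well-define the map $p$ and run the induction), the lemma applies in the new order and produces a $\KK$-basis $\calB_1 \sqcup \calB_2$ of $I$. The crucial effect of the re-ordering is that $\Gamma[J]^+$ becomes downward closed in $(\Gamma^+, \le)$: every $\beta \in \Gamma^+ \setminus \Gamma[J]^+$ has a positive entry in one of the first $n-|J|$ coordinates (those indexed by $[1,n]\setminus J$), and so exceeds every $\alpha \in \Gamma[J]^+$, which has zeros there. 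Hence for $\alpha \in A_2 \cap \Gamma[J]^+$ one has $p(\alpha) < \alpha$, so $p(\alpha) \in \Gamma[J]^+$, and the corresponding binomial $x^\alpha - \nu_\alpha x^{p(\alpha)}$ lies in $\Abq[J]$. Summarising: a basis element with top exponent $\alpha$ lies in $\Abq[J]$ if and only if $\alpha \in \Gamma[J]^+$.

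The second step is to show that $\calB' := (\calB_1 \sqcup \calB_2) \cap \Abq[J]$ is a $\KK$-basis for $I \cap \Abq[J]$. Linear independence is inherited from $\calB_1 \sqcup \calB_2$. For spanning, take $f \in I \cap \Abq[J]$ and write $f = \sum_i c_i b_i$ uniquely with $b_i \in \calB_1 \sqcup \calB_2$ and $c_i \in \kx$. Suppose for contradiction that some $b_i \notin \Abq[J]$ occurs with $c_i \ne 0$, and let $\alpha_*$ be the maximum of the top exponents of such $b_i$; then $\alpha_* \notin \Gamma[J]^+$. In the expansion of $f$, the monomial $x^{\alpha_*}$ can appear only from (i) the unique basis element $b_{i_0}$ with top exponent $\alpha_*$, or (ii) some $b_{i'} = x^{\alpha'} - \nu_{\alpha'} x^{p(\alpha')} \in \calB_2$ with $p(\alpha') = \alpha_*$ and hence $\alpha' > \alpha_*$. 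In case (ii), downward closure of $\Gamma[J]^+$ forces $\alpha' \notin \Gamma[J]^+$ (else $\alpha_* \in \Gamma[J]^+$), so $b_{i'} \notin \Abq[J]$; maximality of $\alpha_*$ then forces $c_{i'} = 0$. Hence the coefficient of $x^{\alpha_*}$ in $f$ equals $c_{i_0} \ne 0$, contradicting $f \in \Abq[J]$. Therefore $\calB' \subseteq \Abq[J]$ spans $I \cap \Abq[J]$; since its elements are binomials (or monomials) in $\Abq[J]$, they generate $I \cap \Abq[J]$ as an ideal, so $I \cap \Abq[J]$ is a binomial ideal of $\Abq[J]$.

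The main obstacle is the cancellation analysis in the second step, where one must rule out the possibility that the monomial $x^{\alpha_*}$, contributed by a basis element outside $\Abq[J]$, is cancelled by contributions from other basis elements (necessarily from $\calB_2$ via the auxiliary monomial $x^{p(\alpha')}$). The chosen re-ordering---which makes $\Gamma[J]^+$ downward closed---is precisely what lets the maximality of $\alpha_*$ control these potential cancellations and produce the contradiction.
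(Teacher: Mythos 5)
Your argument is correct, but it follows a genuinely different route from the paper. The paper's proof is a two-line application of its grading machinery: it grades $\Abq/I$ by the monoid $S=\Gamma^+/{\sim}$ via Lemma \ref{Aq.mod.binom}, observes that $\Abq[J]/(I\cap\Abq[J])$ is the subalgebra of $\Abq/I$ generated by the homogeneous, quasi-commuting cosets $x_j+I$ ($j\in J$), and then invokes Lemma \ref{kerphi.binom} to conclude the kernel is binomial. You instead work with the explicit binomial $\KK$-basis of Lemma \ref{basis.binom.Aq}, after choosing a lex order in which the variables outside $J$ come first; your key observations -- that the lemma's proof only needs an additive well-ordering of $\Gamma^+$ (so the permuted lex order is legitimate; equivalently one could permute the variables by an isomorphism of quantum affine spaces), that this choice makes $\Gamma[J]^+$ an initial segment so a basis element lies in $\Abq[J]$ exactly when its top exponent does, and the maximal-exponent cancellation analysis showing the basis elements supported in $\Gamma[J]^+$ span $I\cap\Abq[J]$ -- are all sound, including the uniqueness of the basis element with top exponent $\alpha_*$ and the exclusion of contributions via $p(\alpha')=\alpha_*$. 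In effect you give an elimination-order argument in the spirit of Eisenbud--Sturmfels' commutative treatment, which is more hands-on and yields a bonus the paper's proof does not: an explicit binomial $\KK$-basis for $I\cap\Abq[J]$ obtained by intersecting a binomial basis of $I$ with $\Abq[J]$. The paper's route is shorter given Lemmas \ref{Aq.mod.binom} and \ref{kerphi.binom} (which it reuses elsewhere), while yours is self-contained modulo Lemma \ref{basis.binom.Aq}.
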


\begin{proof} Fix $J \subseteq [ 1,n ]$. 

Grade $\Abq/I$ by $S := \Gamma^+/{\sim}$ as in Lemma \ref{Aq.mod.binom}. Then $\Abq[J]/(I \cap \Abq[J])$ is isomorphic to the subalgebra $R$ of $\Abq/I$ generated by the cosets $y_j := x_j + I$ for $j \in J$. Since the $y_j$ are homogeneous with respect to the $S$-grading of $\Abq/I$, the algebra $R$ is also graded by $S$, with homogeneous components $R_s = R \cap (\Abq / I)_s$. 

Now $y_j y_k = q_{jk} y_k y_j$ for all $j,k \in J$. The natural map $\psi : \Abq[J] \rightarrow R$ sends $x_j$ to $y_j$ for $j \in J$, and $\ker\psi = I \cap \Abq[J]$. Lemma \ref{kerphi.binom} therefore shows that $I \cap \Abq[J]$ is a binomial ideal of $\Abq[J]$.
\end{proof}

\begin{lemma}  \label{ES1.56}
Let $I$ and $I'$ be binomial ideals and $J_1,\dots,J_m$ monomial ideals of $\Abq$. 

{\rm(a)} $(I+J_1) \cap \cdots \cap (I+J_m) = I+J$ for some monomial ideal $J$ of $\Abq$.

{\rm(b)} $(I+I') \cap (I+J_1) \cap \cdots \cap (I+J_m)$ is a binomial ideal
of $\Abq$.
\end{lemma}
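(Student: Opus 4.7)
The plan is to prove (a) by using the $S$-grading on $\Abq/I$ provided by Lemma \ref{Aq.mod.binom}, and then to deduce (b) by combining (a) with the modular law and a basis computation in a binomial quotient.

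For (a), set $R := \Abq/I$ and let $\pi : \Abq \to R$ be the canonical map.  By Lemma \ref{Aq.mod.binom}, $R$ is graded by the commutative monoid $S = \Gamma^+/{\sim}$ with each nonzero homogeneous component $R_s$ one-dimensional; write $[\alpha] \in S$ for the $\sim$-class of $\alpha \in \Gamma^+$.  Since each $J_k$ is $\KK$-spanned by monomials $x^\delta$, the image $(I+J_k)/I$ is $\KK$-spanned by the homogeneous elements $y^\delta = x^\delta + I$, hence is an $S$-homogeneous ideal of $R$.  The intersection $H := \bigcap_{k=1}^m (I+J_k)/I$ is then $S$-homogeneous, and by one-dimensionality $H \cap R_s \in \{0, R_s\}$ for every $s$.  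Setting
\begin{equation*}
T := \{ s \in S \mid R_s \subseteq H \}, \qquad J := \langle x^\alpha \mid \alpha \in \Gamma^+,\ [\alpha] \in T \rangle \vartriangleleft \Abq,
\end{equation*}
one checks by decomposing any $a \in \pi^{-1}(H)$ according to $\sim$-classes that $\pi^{-1}(H) = I + J$:  classes $s$ with $R_s = 0$ contribute to $I$ (since then every $x^\alpha$ with $[\alpha] = s$ lies in $I$), classes with $R_s \ne 0$ and $R_s \not\subseteq H$ again contribute to $I$ (since the relevant class-piece $\pi(a_s)$ must vanish), and classes with $R_s \subseteq H$ contribute to $J$.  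This proves (a).

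For (b), apply (a) to rewrite the intersection as $(I+I') \cap (I+J)$ for some monomial ideal $J$.  Because $I \subseteq I+I'$, the modular law gives
\begin{equation*}
(I+I') \cap (I+J) = I + \bigl( (I+I') \cap J \bigr),
\end{equation*}
so (b) reduces to the following key claim:  for any binomial ideal $K$ and any monomial ideal $J$ of $\Abq$, the intersection $K \cap J$ is a binomial ideal.  Applied to $K := I+I'$, this completes (b), since $I$ is itself binomial.  To prove the key claim, note that the normality of the $x^\alpha$ in $\Abq$ yields a $\KK$-basis $(x^\delta \mid \delta \in E)$ of $J$ for some monomially-closed $E \subseteq \Gamma^+$.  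Apply Lemma \ref{Aq.mod.binom} to $K$ to obtain an $S'$-grading on $R' := \Abq/K$ with $R'_{s'} = \KK y^\alpha$ for $\alpha \in s'$.  Partition $E = E_0 \sqcup E_1$ where $E_0 := \{\delta \in E \mid x^\delta \in K\}$, and for each $\sim$-class $s'$ meeting $E_1$ fix a representative $\delta_{s'} \in E_1 \cap s'$; this determines unique scalars $\mu_\delta \in \kx$ with $y^\delta = \mu_\delta y^{\delta_{s'}}$ for $\delta \in E_1 \cap s'$.  The monomials $x^\delta$ ($\delta \in E_0$) and the binomials $x^\delta - \mu_\delta x^{\delta_{s'}}$ (for $\delta \in E_1 \cap s'$, $\delta \ne \delta_{s'}$) all lie in $K \cap J$.

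For an arbitrary $a = \sum_{\delta \in E} \lambda_\delta x^\delta \in K \cap J$, group by $\sim$-class:  since each $R'_{s'}$ is one-dimensional and the image of $a$ in $R'$ is zero, one obtains $\sum_{\delta \in E_1 \cap s'} \lambda_\delta \mu_\delta = 0$ for every $s'$, which rewrites $a$ as an explicit $\KK$-linear combination of the listed binomials.  Hence $K \cap J$ is $\KK$-spanned, and therefore generated as an ideal, by binomials.  The main obstacle is exactly this final rewriting:  one has to combine the monomial $\KK$-basis of $J$ with the one-dimensionality of each $R'_{s'}$ to realize $a$ explicitly in the required form.  Once that is in hand, (b) falls out routinely from (a), the modular law, and Lemma \ref{Aq.mod.binom}.
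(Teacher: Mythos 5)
Your proof is correct, and part (b) follows a genuinely different route from the paper's. For (a) both arguments are essentially the same: you exploit the $S$-grading of $\Abq/I$ from Lemma \ref{Aq.mod.binom} and the one-dimensionality of its homogeneous components (the paper phrases this by choosing a monomial $\KK$-basis indexed by $S_\bullet$ and intersecting sub-bases; you phrase it via homogeneity of the ideals $(I+J_k)/I$ — same content). For (b), the paper reduces to $m=1$ and adapts the Eisenbud--Sturmfels deformation trick: it passes to the rank-$(n+1)$ quantum affine space $\Abq[t]$, forms the binomial ideal $I[t]+tI'[t]+(1-t)J_1[t]$, shows its contraction to $\Abq$ is exactly $(I+I')\cap(I+J_1)$ by evaluating at $t=0$ and $t=1$, and then invokes Lemma \ref{ES1.3} to see that the contraction is binomial. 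You instead stay inside $\Abq$: after (a) and the modular law (valid here since $I \subseteq I+I'$), the problem becomes showing that the intersection of a binomial ideal $K$ with a monomial ideal $J$ is binomial, which you prove directly from the monomial $\KK$-basis of $J$ and the $S'$-grading of $\Abq/K$ with one-dimensional components; your per-class cancellation $\sum_{\delta}\lambda_\delta\mu_\delta=0$ and the resulting rewriting of an arbitrary element of $K\cap J$ as a combination of the exhibited monomials and binomials is sound, and a spanning set of binomials inside an ideal does generate it. What each approach buys: yours avoids the auxiliary variable and Lemma \ref{ES1.3} entirely and isolates the noncommutative analog of the Eisenbud--Sturmfels fact that binomial $\cap$ monomial is binomial, which the paper never states explicitly; the paper's argument is a tighter parallel to the commutative source and dispatches the mixed intersection in one stroke without the modular-law reduction.
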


\begin{proof}
(a) Set $R := \Abq/I$. With notation as in Lemma \ref{Aq.mod.binom}, $R$ is graded by the monoid $S := \Gamma^+/{\sim}$, with components $R_s = \KK y^\alpha$ for $s \in S$ and $\alpha \in s$. Set $S_\bullet := \{ s\in S \mid R_s \ne 0 \}$ and fix $\alpha(s) \in s$ for $s\in S_\bullet$. Then $\calB := (y^{\alpha(s)} \mid s \in S_\bullet)$ is a $\KK$-basis for $R$.

For $i \in [ 1,m ]$, the ideal $\Jbar_i := (I+J_i)/I$ of $R$ is generated by some collection of $y^\alpha$s, and hence also spanned by such a collection. Thus, $\Jbar_i$ has a $\KK$-basis $\calB_i := (y^{\alpha(t)} \mid t \in T_i)$ for some $T_i \subseteq S_\bullet$. Since the $\calB_i$ are all part of the same basis $\calB$ for $R$, the intersection $\Jbar := \bigcap_{i=1}^m \Jbar_i$ has a $\KK$-basis $( y^{\alpha(t)} \mid t \in T )$ where $T := \bigcap_{i=1}^m T_i$. Now $\Jbar = \bigl( \bigcap_{i=1}^m (I+J_i) \bigr) /I$, and so $\bigcap_{i=1}^m (I+J_i) = I+J^0$ where $J^0$ is the $\KK$-span of $\{ x^{\alpha(t)} \mid t \in T \}$. We conclude that $\bigcap_{i=1}^m (I+J_i) = I+ \langle x^{\alpha(t)} \mid t \in T \rangle$.

(b) In view of part (a), only the case $m=1$ needs to be proved.
We proceed as in \cite[Corollary 1.5]{EiSt}, working with a polynomial ring $\Abq[t]$, which is a quantum affine space of rank $n+1$. Let $I^+ := I[t] + t I'[t] + (1-t) J_1[t]$ be the ideal of $\Abq[t]$ generated by $I + tI' + (1-t)J_1$, and observe that $I^+$ is a binomial ideal. We claim that $I^+ \cap \Abq = (I+I') \cap (I+J_1)$. On one hand, if $a,b \in I$, $c \in I'$, and $d \in J_1$ with $a+c = b+d$, then
$$
a+c = a + tc + (1-t)(b+d-a) = [a + (1-t)(b-a)] + tc + (1-t)d \in I^+ \cap \Abq\,.
$$
If $\ell \in I^+ \cap \Abq$, then $\ell = f + tg + (1-t)h$ for some $f \in I[t]$, $g \in I'[t]$, and $h \in J_1[t]$. Then $\ell = \ell(0) = f(0) + h(0) \in I+J_1$ and $\ell = \ell(1) = f(1) + g(1) \in I + I'$, whence $\ell \in (I+I') \cap (I+J_1)$. This establishes the claim, and therefore Lemma \ref{ES1.3} implies that $ (I+I') \cap (I+J_1)$ is a binomial ideal.
\end{proof}

\begin{lemma}  \label{ES3.2}
Let $R$ be an arbitrary ring, $I$ an ideal of $R$, and $x_1,\dots,x_n$ some normal elements of $R$.  Define $x_\bullet$ and $(I:x_\bullet)$ as in {\rm\S\ref{xbull}} and \eqref{def.I:xbull}.  Then
\begin{equation}  \label{sqrtI}
\sqrt{I} = \sqrt{(I:x_\bullet)} \cap \sqrt{I+\langle x_1\rangle} \cap \cdots \cap \sqrt{I+\langle x_n \rangle}.
\end{equation}
\end{lemma}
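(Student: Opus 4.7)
My plan is to prove the asserted equality by the usual two inclusions, exploiting the characterization $\sqrt{I} = \bigcap\{P \in \Spec R \mid P \supseteq I\}$. The inclusion $\sqrt{I} \subseteq$ (RHS) is immediate from monotonicity of radicals, since $I \subseteq (I:x_\bullet)$ and $I \subseteq I+\langle x_i\rangle$ for each $i$. So the whole substance of the proof lies in the reverse inclusion.

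For that direction I would fix an arbitrary element $a$ of the right-hand side together with an arbitrary prime $P$ of $R$ containing $I$, and show that $a \in P$; taking the intersection over all such $P$ then gives $a \in \sqrt{I}$. I would split on whether or not $P$ meets $\{x_1,\dots,x_n\}$. In the easy case, some $x_i \in P$, so $I+\langle x_i\rangle \subseteq P$ and therefore $a \in \sqrt{I+\langle x_i\rangle} \subseteq \sqrt{P} = P$.

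The real work is the remaining case, in which $x_i \notin P$ for every $i$. The key auxiliary fact I would establish is that for any normal element $y \in R$ and any $b \in R$, if $by \in P$ and $y \notin P$, then $b \in P$: normality gives $bRy = byR \subseteq P$, and then the prime condition forces $b \in P$ or $y \in P$. Iterating this in two ways I would show that every $y \in x_\bullet$ is itself normal (from $y'R = y'x_{i_k}R = y'Rx_{i_k} = Ry'x_{i_k} = Ry$ by induction on the length of the product defining $y$) and that no such $y$ lies in $P$ (if $y = y'x_{i_k} \in P$, the auxiliary fact puts $y'$ or $x_{i_k}$ in $P$, contradicting the inductive hypothesis). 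Then from $a \in \sqrt{(I:x_\bullet)}$ I pick $m \ge 1$ and $y \in x_\bullet$ with $a^m y \in I \subseteq P$; since $y$ is normal and not in $P$, the auxiliary fact yields $a^m \in P$ and hence $a \in P$.

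The only real obstacle is bookkeeping around normality in the noncommutative setting: the implication ``$by \in P$ and $y \notin P$ $\Rightarrow$ $b \in P$'' is trivial for commutative $R$, but here it genuinely requires the normality of $y$ to rewrite $bRy = byR$ before applying the definition of prime. Once that single normality-primeness lemma is in place, the rest of the argument is a routine case split and induction.
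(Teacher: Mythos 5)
Your skeleton is the right one and matches the adaptation the paper has in mind: reduce to showing that every prime $P \supseteq I$ contains a given element $a$ of the right-hand side, split on whether some $x_i$ lies in $P$, and in the remaining case exploit normality. Your auxiliary facts are all correct -- a product of normal elements is normal, no element of $x_\bullet$ can lie in $P$ when no $x_i$ does, and if $y$ is normal with $by \in P$ and $y \notin P$ then $b \in P$ (this last is exactly the paper's remark that a normal element outside $P$ is regular modulo $P$). The gap is in the last two steps of your hard case, and it comes from treating $\sqrt{\phantom{I}}$ as the commutative elementwise radical when the paper defines it as the prime radical of a possibly noncommutative ring. First, ``$a \in \sqrt{(I:x_\bullet)}$, so pick $m$ with $a^m \in (I:x_\bullet)$'' is not the definition; it happens to be true (the prime radical of $R/(I:x_\bullet)$ is a nil ideal, its elements being strongly nilpotent), but it requires citation or proof. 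Second, and fatally as written, the concluding inference ``$a^m \in P$ and hence $a \in P$'' is false for noncommutative primes: a prime factor ring can contain nilpotent elements, e.g.\ $R = M_2(\KK)$ with $P = 0$ and $a$ the matrix unit $e_{12}$, where $a^2 \in P$ but $a \notin P$. So containing a power of $a$ does not force a prime ideal to contain $a$.

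Both problems vanish if you argue at the level of ideals instead of powers of $a$: in the case $x_1,\dots,x_n \notin P$, show directly that $(I:x_\bullet) \subseteq P$ -- given $b \in (I:x_\bullet)$ there is $y \in x_\bullet$ with $by \in I \subseteq P$, and your normality lemma (with $y \notin P$) gives $b \in P$. Since $P$ is then a prime ideal containing $(I:x_\bullet)$, and $\sqrt{(I:x_\bullet)}$ is by definition the intersection of all such primes, you get $a \in \sqrt{(I:x_\bullet)} \subseteq P$ with no appeal to nilpotence at all. With that substitution your argument is complete and is essentially the Eisenbud--Sturmfels proof the paper cites, with your auxiliary fact supplying the noncommutative ingredient.
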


\begin{proof} This is proved exactly as \cite[Lemma 3.2]{EiSt}, given that any normal element $z \in R$ that lies outside a prime ideal $P$ of $R$ is regular modulo $P$.
\end{proof}

\begin{lemma}  \label{ES3.3}
Let $I$ be a binomial ideal of $\Abq$. Set $\calA_i := \Abq[[1,n] \setminus \{i\}]$ and $I_i := I \cap \calA_i$ for $i \in [ 1,n ]$. Then $I + \langle x_i \rangle = \Abq I_i \Abq + \langle x_i \rangle + M_i \Abq$ for some monomial ideal $M_i$ of $\calA_i$.
\end{lemma}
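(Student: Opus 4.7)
The plan is to exploit the $\KK$-algebra retraction $\pi_i := \pi_{[1,n]\setminus\{i\}} : \Abq \twoheadrightarrow \calA_i$ from \S6.1, which sends $x_i\mapsto 0$ and fixes the remaining generators, so that $\ker\pi_i = \langle x_i\rangle$ and $I + \langle x_i\rangle = \pi_i^{-1}(\pi_i(I))$. It therefore suffices to establish the ideal-of-$\calA_i$ identity $\pi_i(I) = I_i + M_i$ for some monomial ideal $M_i$ of $\calA_i$. Lifting this identity through $\pi_i$, and using that monomials in $\calA_i$ are normal in $\Abq$ (so that $M_i\Abq = \Abq M_i\Abq$ is a two-sided ideal), will then yield the desired formula
$$
I + \langle x_i\rangle \;=\; \Abq I_i\Abq + M_i\Abq + \langle x_i\rangle.
$$

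To identify $\pi_i(I)$ I would fix a binomial generating set $\{b_s = \lambda_s x^{\alpha(s)} + \mu_s x^{\beta(s)} \mid s\in S\}$ for $I$ and analyze $\pi_i(b_s)$ according to the $i$-th coordinates of $\alpha(s)$ and $\beta(s)$. Because $\pi_i$ annihilates every $x^\gamma$ with $\gamma_i > 0$, each $\pi_i(b_s)$ falls into exactly one of three cases: it is zero, a single (nonzero) monomial in $\calA_i$, or a genuine binomial in $\calA_i$ with both of its terms surviving. In the third case necessarily $(\alpha(s))_i = (\beta(s))_i = 0$, so that $b_s$ already lies in $\calA_i \cap I = I_i$ and $\pi_i(b_s) = b_s \in I_i$. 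Let $M_i$ be the monomial ideal of $\calA_i$ generated by those $\pi_i(b_s)$ that are monomials. Since $\pi_i(I)$ is generated as an ideal of $\calA_i$ by $\{\pi_i(b_s) \mid s\in S\}$, we obtain $\pi_i(I) \subseteq I_i + M_i$. The reverse inclusion is immediate: $I_i \subseteq \pi_i(I)$ because $\pi_i$ restricts to the identity on $\calA_i$ and $I_i \subseteq I$, while $M_i \subseteq \pi_i(I)$ by construction.

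I do not anticipate any serious obstacle. The argument rests only on the simple three-way dichotomy for $\pi_i$ applied to a binomial, plus the elementary observation that monomials in $\calA_i$ are normal in $\Abq$; this is essentially the noncommutative counterpart of the opening step of \cite[Lemma 3.3]{EiSt}.
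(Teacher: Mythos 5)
Your proposal is correct and takes essentially the same route as the paper: the paper's proof is just a citation of \cite[Lemma 3.3]{EiSt}, whose argument is precisely your classification of the images of the binomial generators under the retraction $\pi_i$ (zero, a surviving monomial contributing to $M_i$, or a two-term binomial already lying in $I_i$), followed by pulling the resulting identity $\pi_i(I)=I_i+M_i$ back along $\pi_i$ using $\ker\pi_i=\langle x_i\rangle$. No gaps.
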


\begin{remark*}
Since $M_i$ is generated by monomials, which are normal elements in $\Abq$, the right ideal $M_i \Abq$ is actually a two-sided ideal. This may not hold for $I_i \Abq$, so we take $\Abq I_i \Abq$ to be sure to get the two-sided ideal generated by $I_i$. On the other hand, the retraction $\pi_{[1,n] \setminus \{i\}}$ maps $\Abq I_i \Abq + \langle x_i \rangle + M_i \Abq$ onto $I_i+M_i$, so we can also express this ideal in the form $I_i + M_i + \langle x_i \rangle$.
\end{remark*}

\begin{proof}
This is proved exactly as \cite[Lemma 3.3]{EiSt}, modulo obvious changes of notation.
\end{proof}

\subsection{}  \label{radAq.to.radAi}
Observe that for any $i \in [1,n]$, the algebra $\Abq$ may be presented as a skew polynomial ring $\calA_i[x_i; \sigma_i]$ where $\calA_i := \Abq[ [1,n] \setminus \{i\}]$ and $\sigma_i$ is an automorphism of $\calA_i$. Consequently, all prime ideals of $\Abq$ contract to finite intersections of prime ideals of $\calA_i$ (e.g., \cite[Remark 5$^*$, Lemmas 1.2, 1.3]{GolMic}). Thus, all radical ideals of $\Abq$ contract to radical ideals of $\calA_i$.

\begin{proposition}  \label{ES3.4}
Let $I$ be a radical binomial ideal and $M$ a monomial ideal of $\Abq$. Then $\sqrt{I+M} = I+J$ for some monomial ideal $J$ of $\Abq$.
\end{proposition}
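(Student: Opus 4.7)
The plan is to proceed by induction on $n$, decomposing $\sqrt{I+M}$ via Lemma \ref{ES3.2} and then exploiting the monoid grading on $\Abq/I$ furnished by Lemma \ref{Aq.mod.binom}.

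The case $n=0$ is immediate. For the inductive step, first dispose of trivial sub-cases: if $M=0$, take $J = 0$ (since $I$ is radical), and if $M = \Abq$, take $J = \Abq$. Otherwise $M$ is proper and nonzero, so it contains a monomial $x^\alpha$ with $\alpha \ne 0$; since such $x^\alpha$ is invertible in $\Tbq$, we get $(I+M)\Tbq = \Tbq$, and therefore $((I+M):x_\bullet) = (I+M)\Tbq \cap \Abq = \Abq$. Consequently Lemma \ref{ES3.2} collapses to
\[
\sqrt{I+M} = \bigcap_{i=1}^n \sqrt{I + M + \langle x_i \rangle}.
\]

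For each $i$, set $\calA_i := \Abq[[1,n]\setminus\{i\}]$ and let $\pi_i \colon \Abq \to \calA_i$ be the retraction with kernel $\langle x_i \rangle$. Setting $I_i := I \cap \calA_i$, this ideal is radical (by \S\ref{radAq.to.radAi}) and binomial (by Lemma \ref{ES1.3}), while Lemma \ref{ES3.3} gives $\pi_i(I) = I_i + M_i$ for some monomial ideal $M_i$ of $\calA_i$. The inductive hypothesis, applied inside the rank-$(n{-}1)$ quantum affine space $\calA_i$ to the radical binomial ideal $I_i$ and the monomial ideal $M_i + \pi_i(M)$, yields
\[
\sqrt{I_i + M_i + \pi_i(M)} = I_i + J_i'
\]
for some monomial ideal $J_i'$ of $\calA_i$. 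Pulling back along $\pi_i$ shows $\sqrt{I + M + \langle x_i \rangle} = I_i + J_i' + \langle x_i \rangle$ as ideals of $\Abq$.

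It remains to assemble the intersection. Let $R := \Abq/I$; by Lemma \ref{Aq.mod.binom}, $R$ carries a grading by the monoid $S := \Gamma^+/{\sim}$ whose nonzero homogeneous components $R_s$ are all one-dimensional, each spanned by the image of some monomial $x^\alpha$. Since $I_i \subseteq I$ and $J_i'$ and $\langle x_i\rangle$ are generated by monomials of $\Abq$, each ideal $(I_i + J_i' + \langle x_i \rangle)/I$ of $R$ is generated by images of monomials, hence is $S$-homogeneous. Because an intersection of $S$-homogeneous ideals is $S$-homogeneous, $\sqrt{I+M}/I$ is $S$-homogeneous in $R$; and any $S$-homogeneous ideal of $R$ is the $\KK$-span of a subset of the basis consisting of the images of chosen monomial representatives, so lifts back to an ideal of the form $I + J$ with $J$ the monomial ideal of $\Abq$ generated by the corresponding $x^\alpha$'s. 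Applied to $\sqrt{I+M}$, this completes the induction.

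The main obstacle is verifying the grading compatibility at the end: confirming that the ideals $(I_i + J_i' + \langle x_i \rangle)/I$ really are $S$-homogeneous in $R$, and that $S$-homogeneity lifts cleanly to the form $I + J$ with $J$ monomial in $\Abq$. The remainder is bookkeeping, linking Lemmas \ref{ES3.2}, \ref{ES3.3}, \ref{ES1.3} and the contraction-of-radicals observation in \S\ref{radAq.to.radAi}.
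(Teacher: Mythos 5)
Your proposal is correct and follows the paper's proof in all essentials: induction on $n$, collapsing Lemma \ref{ES3.2} after observing that $(I+M:x_\bullet)=\Abq$, and then applying Lemma \ref{ES3.3} together with the inductive hypothesis inside each $\calA_i$ (your $M_i+\pi_i(M)$ plays the role of the paper's $M_i = M_i'+M_i''$). The only deviation is at the final assembly, where the paper rewrites each $\sqrt{I+M+\langle x_i\rangle}$ in the form $I+J_i$ and cites Lemma \ref{ES1.56}(a), while you intersect directly using the $S$-grading of $\Abq/I$ and the one-dimensionality of its homogeneous components --- which is precisely the argument by which Lemma \ref{ES1.56}(a) is proved, so this is a reorganization rather than a genuinely different route.
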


\begin{proof}
We follow the line of \cite[Proposition 3.4]{EiSt}, by induction on $n$. The case $n=0$ is trivial. Now assume that $n>0$.

Since there is nothing to prove if $M=0$, we may assume that there exists a monomial in $M$. Then $(I+M : x_\bullet) = \Abq$ and Lemma \ref{ES3.2} implies that
$$\sqrt{I+M} = \bigcap_{i=1}^n \sqrt{ I+M+\langle x_i \rangle }.$$
By Lemma \ref{ES1.56}(a), it suffices to show that each $\sqrt{ I+M+\langle x_i \rangle } = I+J_i$ for some monomial ideal $J_i$.

Fix $i \in [ 1,n ]$ and set $\calA_i := \Abq[ [1,n] \setminus \{i\}]$ and $\pi_i := \pi_{[1,n] \setminus \{i\}} : \Abq \rightarrow \calA_i$. By \S\ref{radAq.to.radAi}, $I_i := I \cap \calA_i$ is a radical ideal of $\calA_i$.

Note that $M + \langle x_i \rangle = M'_i \Abq + \langle x_i \rangle$ where $M'_i$ is the ideal of $\calA_i$ generated by those monomials in $M$ which do not involve $x_i$.  By Lemma \ref{ES3.3}, $I + \langle x_i \rangle = \Abq I_i \Abq + M''_i \Abq + \langle x_i \rangle$ for some monomial ideal $M''_i$ of $\calA_i$, and thus
$$I+M+\langle x_i \rangle = \Abq I_i \Abq + M_i \Abq + \langle x_i \rangle, \qquad M_i := M'_i + M''_i \,.$$
By induction, $\sqrt{ I_i+M_i } = I_i + N_i$ for some monomial ideal $N_i$ of $\calA_i$, where $\sqrt{ I_i+M_i }$ denotes the radical of $I_i+M_i$ in $\calA_i$. Now
\begin{align*}
\sqrt{ I+M+\langle x_i \rangle } &= \sqrt{ \Abq I_i \Abq + M_i \Abq + \langle x_i \rangle } = \sqrt{I_i+M_i} + \langle x_i \rangle  \\
&= I_i + N_i + \langle x_i \rangle \subseteq I + N_i \Abq + \langle x_i \rangle \subseteq \sqrt{ I+M+\langle x_i \rangle } \,,
\end{align*}
using that $\pi_i( \Abq I_i \Abq + M_i \Abq + \langle x_i \rangle ) = I_i+M_i$ and $N_i \subseteq \sqrt{I_i+M_i} \subseteq \sqrt{ I+M+\langle x_i \rangle }$. Thus $\sqrt{ I+M+\langle x_i \rangle } = I+J_i$ where $J_i := N_i \Abq + \langle x_i \rangle$ is a monomial ideal of $\Abq$, as desired.
\end{proof}

\begin{theorem}  \label{ES3.1}
 Assume that $\KK$ is perfect. If $I$ is any binomial ideal of $\Abq$, then $\sqrt{I}$ is also binomial.
 \end{theorem}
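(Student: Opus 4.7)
The plan is to proceed by induction on $n$, the number of generators of $\Abq$. The base case $n = 0$ is trivial, since $\Abq = \KK$ and every ideal is either $0$ or $\Abq$. For the inductive step, I would first invoke Lemma \ref{ES3.2} with the normal elements $x_1, \dots, x_n$ of $\Abq$ to decompose
\[
\sqrt{I} \;=\; J \cap K_1 \cap \cdots \cap K_n,
\]
where $J := \sqrt{(I : x_\bullet)}$ and $K_i := \sqrt{I + \langle x_i \rangle}$, and then treat the three pieces separately.

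For the piece $J$: since $(I : x_\bullet) = I\Tbq \cap \Abq$, either $I$ contains a monomial (whence $I\Tbq = \Tbq$ and $J = \Abq$ is trivially binomial), or else Corollary \ref{xbull.closure} gives $(I : x_\bullet) = I_A(L,\rho)$ for a unique sublattice $L$ of $\Gamma_Z$ and $\rho \in \Hom(L,\kx)$. Perfectness of $\KK$ enters through Theorem \ref{rad.binomAq}, which shows that $J = \sqrt{I_A(L,\rho)}$ is again a binomial ideal---of the form $I_A(L,\rho)$ in characteristic zero, or $I_A(L'_p, \rho')$ in characteristic $p > 0$. In particular, $J$ is a radical binomial ideal of $\Abq$.

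For each piece $K_i$: by Lemma \ref{ES3.3}, one has $I + \langle x_i \rangle = \Abq I_i \Abq + \langle x_i \rangle + M_i \Abq$, where $I_i := I \cap \calA_i$ is a binomial ideal of the subalgebra $\calA_i := \Abq[[1,n] \setminus \{i\}]$ (by Lemma \ref{ES1.3}) and $M_i$ is a monomial ideal of $\calA_i$. Consequently $\pi_i(I + \langle x_i \rangle) = I_i + M_i$ is a binomial ideal of $\calA_i$, which is a quantum affine space in $n-1$ generators. The inductive hypothesis, applied to $\calA_i$, gives that $\sqrt{I_i + M_i}$ is binomial in $\calA_i$, and pulling back along $\pi_i$ (whose kernel $\langle x_i \rangle$ is itself a binomial ideal) yields $K_i = \sqrt{I_i + M_i}\cdot\Abq + \langle x_i \rangle$ as a binomial ideal of $\Abq$.

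The combination step is the main obstacle. Here the approach is to rewrite $K_1 \cap \cdots \cap K_n$ more tractably: by Lemma \ref{ES1.56}(a) one has $\bigcap_{i=1}^n (I + \langle x_i\rangle) = I + M^\ast$ for some monomial ideal $M^\ast$ of $\Abq$, and since radicals commute with finite intersections this gives $K_1 \cap \cdots \cap K_n = \sqrt{I + M^\ast}$, so that $\sqrt{I} = J \cap \sqrt{I + M^\ast}$. Since $J$ is a radical binomial ideal, Proposition \ref{ES3.4} controls radicals of $J$ together with monomial ideals, producing expressions of the form $J + (\text{monomial})$; and Lemma \ref{ES1.56}(b) provides the structural means to combine a binomial ideal with summands that are sums of that ideal and monomial ideals into a single binomial ideal. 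Assembled together, these tools yield that the intersection $J \cap \sqrt{I + M^\ast}$ is itself a binomial ideal of $\Abq$, completing the induction.
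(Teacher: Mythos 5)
Your decomposition via Lemma \ref{ES3.2}, the treatment of $J = \sqrt{(I:x_\bullet)}$ via Corollary \ref{xbull.closure} and Theorem \ref{rad.binomAq}, and the observation that each $K_i = \sqrt{I+\langle x_i\rangle}$ is the preimage under $\pi_i$ of $\sqrt{I_i+M_i}$, hence binomial by induction, are all sound and match the paper up to that point. The genuine gap is in the combination step, and it is not a technicality: an intersection of binomial ideals is not binomial in general, so knowing that $J$ and each $K_i$ (or $\sqrt{I+M^\ast}$) are separately binomial does not finish the proof. Lemma \ref{ES1.56}(b) only applies to an intersection of the very specific form $(I+I') \cap (I+J_1) \cap \cdots \cap (I+J_m)$ with the \emph{same} binomial ideal $I$ in every factor, $I'$ binomial and the $J_i$ monomial. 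Your argument never establishes that $K_i$ (equivalently $\sqrt{I+M^\ast}$) has the form $I + (\text{monomial ideal})$: the inductive hypothesis only says $\sqrt{I_i+M_i}$ is binomial, and its pullback will in general contain new binomials not lying in $I$ (in characteristic $p$ the radical genuinely creates new binomials), so the needed shape is simply not available from your per-factor analysis. Your proposed fix --- invoking Proposition \ref{ES3.4} ``for $J$'' --- does not work either: Proposition \ref{ES3.4} requires the binomial ideal to be \emph{radical}, which $I$ is not, and you cannot substitute $J$ for $I$ inside the radical since $\sqrt{I+M^\ast} \subsetneqq \sqrt{J+M^\ast}$ may well hold (a prime containing $I$ and some $x_i$ need not contain $(I:x_\bullet)$), so that substitution would change the ideal being computed. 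Nor can you appeal to the theorem inductively for $\sqrt{I+M^\ast}$, since $I+M^\ast$ lives in $\Abq$ itself, not in a smaller quantum affine space.

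What is missing is precisely the paper's normalization and its consequences. The paper first replaces $I$, without loss of generality, by the sum of all binomial ideals contained in $\sqrt{I}$. With that normalization, the contraction $I_i = I \cap \calA_i$ is shown to be a \emph{radical} ideal of $\calA_i$ (Claim 2: by induction $\sqrt{I_i}$ is binomial, it lies in $\sqrt{I} \cap \calA_i$ by \S\ref{radAq.to.radAi}, hence $\Abq\sqrt{I_i}\Abq$ is a binomial ideal inside $\sqrt I$ and so inside $I$ by maximality). Then $\Abq I_i \Abq + \langle x_i\rangle = \pi_i^{-1}(I_i)$ is a radical binomial ideal, and Proposition \ref{ES3.4} applied to it together with the monomial ideal $M_i\Abq$ from Lemma \ref{ES3.3} yields $\sqrt{I+\langle x_i\rangle} = I + J_i$ with $J_i$ monomial (Claim 3). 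Only then does $\sqrt{I} = (I+I') \cap (I+J_1) \cap \cdots \cap (I+J_m)$ have the shape required by Lemma \ref{ES1.56}(b). Without the normalization and Claims 2--3, your argument stalls exactly where you acknowledged the ``main obstacle'' lies.
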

 
 \begin{proof}
 We again proceed by induction on $n$, the case $n=0$ being trivial. Assume now that $n>0$. We may replace $I$ by any binomial ideal lying between $I$ and $\sqrt I$, so without loss of generality, $I$ equals the sum of all binomial ideals contained in $\sqrt I$.
 
\textbf{Claim 1}: The ideal $I' := \sqrt{(I:x_\bullet)}$ is binomial.
 
 If there exists a monomial in $I$, then $(I:x_\bullet) = \Abq$ and $I' = \Abq$. If $I$ contains no monomials, then by Corollary \ref{xbull.closure}, $(I:x_\bullet)$ is a proper binomial ideal $I_A(L,\rho)$, and Theorem \ref{rad.binomAq} implies that $I'$ is binomial. 
 
 As in the previous proof, set $\calA_i := \Abq[ [1,n] \setminus \{i\}]$ and $\pi_i := \pi_{[1,n] \setminus \{i\}} : \Abq \rightarrow \calA_i$ for $i \in [ 1,n ]$, and let $I_i := I \cap \calA_i$. 
 
\textbf{Claim 2}: $I_i$ is a radical ideal of $\calA_i$.

Lemma \ref{ES1.3} implies that $I_i$ is a binomial ideal of $\calA_i$, and so by induction $\sqrt{I_i}$ is binomial. By \S\ref{radAq.to.radAi}, $\sqrt I \cap \calA_i$ is a radical ideal of $\calA_i$, so it contains $\sqrt{I_i}$. Consequently, the binomial ideal $\Abq \sqrt{I_i} \Abq$ is contained in $\sqrt I$, and thus $\Abq \sqrt{I_i} \Abq \subseteq I$ by our assumptions on $I$. It follows that $\sqrt{I_i} \subseteq I_i$, yielding the claim.

\textbf{Claim 3}: $\sqrt{I+ \langle x_i \rangle} = I+J_i$ for some monomial ideal $J_i$ of $\Abq$.

First note that $\Abq I_i \Abq + \langle x_i \rangle = \pi_i^{-1}(I_i)$, which is a radical ideal of $\Abq$ due to Claim 2. According to Lemma \ref{ES3.3},
$$I + \langle x_i \rangle = \Abq I_i \Abq + \langle x_i \rangle + M_i \Abq$$
for some monomial ideal $M_i$ of $\calA_i$. Applying Proposition \ref{ES3.4} to the radical binomial ideal $\Abq I_i \Abq + \langle x_i \rangle$ and the monomial ideal $M_i \Abq$, we obtain
$$\sqrt{ I + \langle x_i \rangle } = \sqrt{ \Abq I_i \Abq + \langle x_i \rangle + M_i \Abq } = \Abq I_i \Abq + \langle x_i \rangle + J'_i$$
for some monomial ideal $J'_i$. Since $\Abq I_i \Abq + \langle x_i \rangle \subseteq I + \langle x_i \rangle \subseteq \sqrt{ I + \langle x_i \rangle }$, it follows that $\sqrt{I+ \langle x_i \rangle} = I+J_i$ where $J_i := \langle x_i \rangle + J'_i$.
 
 Combining Claims 1 and 3 with Lemma \ref{ES3.2}, and writing $I' = I+I'$, we obtain
 $$\sqrt{I} = (I+I') \cap (I+J_1) \cap \cdots \cap (I+J_m).$$
 Therefore Lemma \ref{ES1.56}(b) implies that $\sqrt I$ is binomial.
 \end{proof}

\begin{remark}
It is known that Theorem \ref{ES3.1} does not hold when $\KK$ is not perfect, even in the commutative case. By \cite[Lemma 29]{BGN}, the polynomial ring $\KK[x_1,x_2]$ has a binomial ideal whose radical is not binomial (cf.~Example \ref{rad.nonbinom.Tq}).
\end{remark}

\section*{Acknowledgement}
We thank the referee for extensive comments and suggestions, and for raising questions about binomial algebras.


\end{document}